
 \documentclass[12pt]{amsart}

\setlength{\textheight}{21cm}
\setlength{\textwidth}{16cm}
\setlength{\topmargin}{0cm}
\setlength{\parskip}{0.3\baselineskip}
\hoffset=-1.4cm
\baselineskip=13pt

\usepackage{amsmath}
\usepackage{amssymb}
\usepackage{mathdots}
\usepackage{mathrsfs}
\usepackage{amsthm}
\usepackage{tikz}
\usepackage{adjustbox}
\usepackage[all]{xy}

\numberwithin{equation}{section}

\newtheorem{thm}{Theorem}[section]

\newtheorem{prop}[thm]{Proposition}
\newtheorem{cor}[thm]{Corollary}
\theoremstyle{remark}
\newtheorem{rem}[thm]{Remark}
\newtheorem{exam}[thm]{Example}
\newtheorem{exam-nota}[thm]{Example-Notation}
\newtheorem{rem-nota}[thm]{Remark-Notation}
\newtheorem{nota}[thm]{Notation}
\theoremstyle{definition}
\newtheorem{dfn}[thm]{Definition}

\newtheorem{dfn-nota}[thm]{Definition-Notation}

\newtheorem{dfn-lem}[thm]{Lemma-Definition}
\newtheorem{dfn-prop}[thm]{Proposition-Definition}

\newcommand{\beqa}{\begin{eqnarray*}}
\newcommand{\eeqa}{\end{eqnarray*}}

\newcommand{\fa}{\mbox{${\mathfrak a}$}}

\newcommand{\fk}{\mbox{${\mathfrak k}$}}
\newcommand{\fg}{\mbox{${\mathfrak g}$}}

\newcommand{\fh}{\mbox{${\mathfrak h}$}}

\newcommand{\fp}{\mbox{${\mathfrak p}$}}
\newcommand{\fr}{\mbox{${\mathfrak r}$}}
\newcommand{\fb}{\mbox{${\mathfrak b}$}}

\newcommand{\supp}{{\rm supp}}

\newcommand{\eps}{\epsilon}

\newcommand{\PR}{\mbox{${\mathbb P}$}}

\newcommand{\C}{\mbox{${\mathbb C}$}}

\newcommand{\Ad}{{\rm Ad}}

\newcommand{\fgl}{\mathfrak{gl}}

\newcommand{\B}{\mathcal{B}}

\newcommand{\ms}{m(s_{\alpha})}

\newcommand{\he}{\hat{e}}

\newcommand{\F}{\mathcal{F}}

\newcommand{\G}{\mathcal{G}}
\newcommand{\Borbitspace}{B_{n-1}\backslash\B_{n}}
\newcommand{\Sh}{\mathcal{S}h}
\newcommand{\calO}{\mathcal{O}}

\newcommand{\Sp}{\mathcal{S}p}

\AtBeginDocument{%
   \def\MR#1{}
}\title{Orbits on a product of two flags and a line and the Bruhat order, I}

\author[M. Colarusso]{Mark Colarusso}
\address{Department of Math and Stats, University of South Alabama, Mobile, AL, 36608}
\email{mcolarusso@southalabama.edu}

\author[S. Evens]{Sam Evens}
\address{Department of Mathematics, University of Notre Dame, Notre Dame, IN, 46556}
\email{sevens@nd.edu}
\subjclass[2020]{14M15, 14L30, 20G20, 05E14}
\keywords{algebraic group actions, flag variety, Bruhat order}
\begin{document}
\maketitle
\tableofcontents
\begin{abstract}
Let $G=GL(n)$ be the $n\times n$ complex general linear group and let $\B_{n}$ be its flag variety.  The standard Borel subgroup $B$ of upper triangular matrices acts on the product $\B_{n}\times \mathbb{P}^{n-1}$ diagonally with finitely many orbits.  In this paper, we study the $B$-orbits on the subvarieties $\B_{n}\times \mathcal{O}_{i}$, where $\mathcal{O}_{i}$ is the $B$-orbit on $\mathbb{P}^{n-1}$ containing the line through the origin in the direction of the $i$-th standard basis vector of $\C^{n}$.   For each $i=1,\dots, n$, we construct a bijection between $B$-orbits on $\B_{n}\times\mathcal{O}_{i}$ and certain pairs of Schubert cells in $\B_{n}\times\B_{n}$.  We also show that this bijection can be used to understand the Richardson-Springer monoid action on such $B$-orbits in terms of the classical monoid action of the symmetric group on itself.  We also develop combinatorial models of these orbits and use these models to compute exponential generating functions for the sequences $\{|B\backslash(\B_{n}\times\mathcal{O}_{i})|\}_{n\geq 1}$ and $\{|B\backslash (\B_{n}\times \mathbb{P}^{n-1})|\}_{n\geq 1}$.  In the sequel to this paper, we use the results of this paper to construct a correspondence between $B$-orbits on $\B_{n}\times\mathbb{P}^{n-1}$ and a collection of $B$-orbits on the flag variety $\B_{n+1}$ of $GL(n+1)$ and show that this correspondence respects closures relations and preserves monoid actions.   As a consequence both closure relations and monoid actions on the set of all $B$-orbits on $\B_{n}\times\mathbb{P}^{n-1}$ can be understood via the Bruhat order by using our results in \cite{Shpairs}. 

\end{abstract}

\section{Introduction}
Let $G=GL(n)$ be the $n\times n$ complex general linear group, let $\B_{n}$ be its flag variety, and let $B_{n-1}$ be the standard Borel subgroup of $(n-1)\times (n-1)$ invertible upper triangular matrices embedded in the top lefthand corner of $G.$ It is well-known that $B_{n-1}$ has finitely many orbits on $\B_{n}.$  In our paper \cite{Shpairs}, we found a bijection between $B_{n-1}$-orbits on $\B_n$ and certain pairs of Schubert cells in $\B_n \times \B_n$ which preserves the poset structure on these orbits given by orbit closures, and intertwines monoid actions from our paper \cite{CE21I} with well-understood monoid actions on Schubert cells.  These results make understanding the relationships between orbits significantly more transparent.

The purpose of this paper and its sequel is to extend our results from \cite{Shpairs} to a much larger class of orbits.   Indeed, the upper triangular Borel subgroup $B$ of $G$ acts diagonally on $\B_n \times \mathbb{P}^{n-1}$ with finitely many orbits.   These orbits have been studied  by Magyar \cite{Magyar} and Travkin \cite{Travkin} and are of broad interest since they are related to the study of mirabolic $\mathcal{D}$-modules which arise in the analysis of category $\mathcal{O}$ for rational Cherednik algebras \cite{FG}. 
   The $B$-orbits on $\B_n \times \mathbb{P}^{n-1}$ are partitioned according to their projections to the $B$-orbits  on $\mathbb{P}^{n-1},$ which are of the form $\mathcal{O}_i:=B\cdot [e_{i}]$ where $[e_{i}]$ is the line through the $i$-th standard basis vector $e_{i}.$   The  $B$-orbits on $\B_n \times \mathcal{O}_{i}$ correspond naturally to the $S_i$-orbits on $\B_n$ where $S_i$ is the stabilizer in $B$ of the line $[e_{i}].$  Thus, results about the collection of $S_i$-orbits for $i=1, \dots, n$ imply results about $B$-orbits on $\B_n \times \mathbb{P}^{n-1}.$   When $i=n$, $S_{n}=B_{n-1}\times Z$ where $Z$ is the centre of $GL(n)$.  In this paper, we give a bijection denoted $\Sh_i$  between $S_i$-orbits on $\B_n$ and certain pairs of Schubert cells in $\B_n \times \B_n$ defined using the index $i$ (Theorem \ref{thm:ShpairsforSi} and Remark \ref{r:geoshar}), generalizing one of the main results of \cite{Shpairs} concerning $B_{n-1}$-orbits to $S_{i}$-orbits for any $i$.  In Section \ref{s:monoid}, we generalize the monoid actions from our paper \cite{CE21I} to the setting of $S_{i}$-orbits on $\B_{n}$ and establish the compatibility of these monoid actions with the usual monoid actions on Schubert cells (Theorem \ref{thm:intertwine}).  We prove that the closure relations on $S_i$-orbits on $\B_n$ are determined using the Bruhat order in the sequel to this paper.   In the sequel, we use results of this paper to provide a correspondence between $B$-orbits on $\B_n \times \mathbb{P}^{n-1}$ and a collection of $B$-orbits on the flag variety $\B_{n+1}$ of $GL(n+1),$ and show that our correspondence preserves closure relations and respects the above monoid actions.   As a consequence, our results from \cite{Shpairs} imply that we can understand the closure relations together with the monoid actions for $B$-orbits on $\B_n \times \mathbb{P}^{n-1}$ using the well-understood Bruhat order on the two-fold product $\Sigma_{n+1} \times \Sigma_{n+1}$ of the symmetric group.

  In more detail, consider the set $G_{\Delta}\backslash(\B_{n}\times\B_{n}\times\mathbb{P}^{n-1})$ of $G$-diagonal orbits on $\B_{n}\times\B_{n}\times\mathbb{P}^{n-1}.$ 
  Then we have natural correspondences
\begin{equation}\label{eq:allthecorres}
\coprod_{i=1}^{n} S_{i}\backslash\B_{n}\longleftrightarrow \coprod_{i=1}^{n} B\backslash (\B_{n}\times\mathcal{O}_{i})\longleftrightarrow G_{\Delta}\backslash(\B_{n}\times\B_{n}\times\mathbb{P}^{n-1}).
\end{equation}

\noindent Let $B^{i}\subset G$ be the Borel subgroup which stabilizes the flag 
\begin{equation}\label{eq:introiflag}
\mathcal{E}^{i}:=(\mathcal{E}_{1}^{i}\subset \dots\subset \mathcal{E}_{n}^{i})
\end{equation} 
where for $\ell = 1, \dots, i-1,$  $\mathcal{E}_{\ell}^{i}=\mbox{span}\{e_{i}, \, e_{1},\dots, e_{\ell-1}\}$ and for $\ell \geq i,$  $\mathcal{E}_{\ell}^{i}=\mbox{span}\{e_{1},\dots, e_{\ell}\}.$
\begin{thm}\label{thm:introiSh}[Theorem \ref{thm:ShpairsforSi}]
The $S_{i}$-orbits on $\B_{n}$ are precisely the non-empty intersections of $B$ and $B^{i}$-orbits on $\B_{n}$.
\end{thm}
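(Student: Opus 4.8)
The plan is to understand the $S_i$-orbits on $\B_n$ by recognizing $S_i$ as an intersection of two Borel subgroups and then invoking a standard fact about intersections of two solvable groups each of which has finitely many orbits. First I would identify $S_i$ explicitly: $S_i = B \cap B^i$. Indeed, $S_i$ is the stabilizer in $B$ of the line $[e_i]$; on the other hand $B^i$ is the Borel subgroup stabilizing the flag $\mathcal{E}^i$ of \eqref{eq:introiflag}, and the first step $\mathcal{E}_1^i = \mathrm{span}\{e_i\}$ is precisely the line $[e_i]$, so $B^i$ also stabilizes $[e_i]$. Conversely, an element $b \in B$ stabilizing $[e_i]$ must fix each flag $\mathcal{E}_\ell^i$ (for $\ell < i$ this is $\mathrm{span}\{e_i, e_1, \dots, e_{\ell-1}\}$, which is $[e_i]$ together with part of the standard flag that $b$ already preserves; for $\ell \geq i$ it is the standard step $\mathrm{span}\{e_1, \dots, e_\ell\}$). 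Hence $b \in B^i$, giving $S_i = B \cap B^i$. This identification is the key computational observation, and verifying it cleanly is where I expect most of the bookkeeping to go — one must check that a $B$-element stabilizing $[e_i]$ really does stabilize the nonstandard steps $\mathcal{E}_\ell^i$ for $\ell < i$, which amounts to the statement that such a $b$ maps $e_j$ into $\mathrm{span}\{e_i, e_1, \dots, e_{j}\}$ for $j < i$, a direct consequence of upper-triangularity together with $b \cdot e_i \in \mathbb{C}^* e_i + \sum_{k<i} \mathbb{C}e_k$ forcing the relevant leading coefficient to be adjusted.

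Next I would record that both $B$ and $B^i$ are Borel subgroups of $G$, hence connected solvable groups, each acting on the projective variety $\B_n$ with finitely many orbits (Bruhat decomposition for $B$, and the $B^i$-analogue obtained by conjugating by the permutation matrix carrying $\mathcal{E}^i$ to the standard flag). Now the core of the argument: I claim that for a connected solvable group $H = B \cap B^i$ sitting inside two Borel subgroups $B, B^i$ as above, the $H$-orbits on $\B_n$ are exactly the nonempty intersections $\mathcal{O} \cap \mathcal{O}'$ where $\mathcal{O}$ ranges over $B$-orbits and $\mathcal{O}'$ over $B^i$-orbits. One inclusion is trivial: each $H$-orbit is contained in a unique $B$-orbit and a unique $B^i$-orbit, hence in their intersection. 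For the reverse inclusion I would argue that each nonempty $\mathcal{O} \cap \mathcal{O}'$ is a single $H$-orbit. Here the cleanest route is a dimension/irreducibility count: the $B$-orbit $\mathcal{O} = B \cdot x$ and $B^i$-orbit $\mathcal{O}' = B^i \cdot x$ are locally closed subvarieties, and $B$, $B^i$ being opposite-type Borels (conjugate under a Weyl group element), the intersection $\mathcal{O} \cap \mathcal{O}'$ of a $B$-orbit with a $B^i$-orbit — when nonempty — is smooth, irreducible, and its tangent space at $x$ is $(\mathfrak{b} + \mathfrak{b}^i)\cdot x$; since $\mathfrak{b} + \mathfrak{b}^i = \mathfrak{g}$ would be too strong, one instead uses that $\mathfrak{h} = \mathfrak{b} \cap \mathfrak{b}^i$ and a transversality argument showing $T_x(\mathcal{O} \cap \mathcal{O}') = (\mathfrak{b}\cdot x) \cap (\mathfrak{b}^i \cdot x) = \mathfrak{h}\cdot x$, so $H\cdot x$ is open in $\mathcal{O}\cap\mathcal{O}'$; combined with $H$-invariance of $\mathcal{O}\cap\mathcal{O}'$ and a standard argument that an irreducible variety with finitely many orbits of a connected group under which an open orbit exists must equal that orbit, we conclude $\mathcal{O}\cap\mathcal{O}' = H\cdot x$.

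The main obstacle is this transversality claim, namely that the scheme-theoretic intersection $\mathcal{O} \cap \mathcal{O}'$ inside $\B_n$ is reduced and has tangent space exactly $\mathfrak{h}\cdot x$ at each point. I would handle it by the Kleiman–Bertini type argument available for Borel orbits: since $B$ and $B^i$ are conjugate, write $B^i = g B g^{-1}$, so $\mathcal{O}' = g \cdot (B \cdot (g^{-1} x))$ is a translate of a Schubert cell; transversality of two Schubert cells (one from $B$, one from an opposite or conjugate Borel) in $\B_n$ is classical — it follows from the Bruhat decomposition of $G$ itself, since $\B_n \times \B_n = \coprod_w G\cdot(\text{graph of } w)$ and each $G$-orbit there is smooth, and the two projections are smooth morphisms with the fibre structure making the intersections transverse. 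Granting this, the identification $S_i = B\cap B^i$ from the first paragraph then finishes the theorem, since the $S_i$-orbits on $\B_n$ are by definition the $(B\cap B^i)$-orbits, which we have shown equal the nonempty pairwise intersections of $B$-orbits with $B^i$-orbits.
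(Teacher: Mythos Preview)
Your identification $S_i = B \cap B^i$ is correct and is also the starting point in the paper.  However, the geometric argument you propose afterward has genuine gaps.

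The central problem is your transversality claim.  You assert that ``transversality of two Schubert cells (one from $B$, one from an opposite or conjugate Borel) in $\B_n$ is classical,'' but this conflates two very different situations.  Transversality of a $B$-cell with a $B^{-}$-cell (opposite Borel) is indeed classical and yields open Richardson varieties.  But $B^{i}=\sigma_{i}B\sigma_{i}^{-1}$ with $\sigma_{i}=(i,i-1,\dots,1)$ of length $i-1$, so $B^{i}$ is \emph{not} opposite to $B$ for $i<n$ (and not even for $i=n$ once $n\ge 3$).  For a general conjugate Borel there is no off-the-shelf transversality statement, and Kleiman--Bertini does not help: that theorem gives transversality after a \emph{generic} translate, whereas $B^{i}$ is a fixed, specific conjugate.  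Likewise the irreducibility of $\mathcal{O}\cap\mathcal{O}'$ is not a formality outside the opposite-Borel setting.

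Even granting transversality of the orbits, the equality $(\mathfrak{b}\cdot x)\cap(\mathfrak{b}^{i}\cdot x)=(\mathfrak{b}\cap\mathfrak{b}^{i})\cdot x$ does not follow formally: writing $K=\mathfrak{b}_x$ for the isotropy, you would need $(\mathfrak{b}+K)\cap(\mathfrak{b}^{i}+K)=(\mathfrak{b}\cap\mathfrak{b}^{i})+K$, a modular-law statement for three Borel subalgebras that is not automatic and that you do not verify.  Finally, the ``standard argument that an irreducible variety with finitely many orbits of a connected group under which an open orbit exists must equal that orbit'' is false as stated (witness $B$ acting on $\B_n$); what you would actually need is that \emph{every} $S_i$-orbit in $\mathcal{O}\cap\mathcal{O}'$ is open, which is exactly the unproved tangent-space equality above, applied at every point.

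The paper avoids all of this by a completely different, combinatorial route.  It first uses Magyar's parameterization of $B$-orbits on $\B_n\times\mathbb{P}^{n-1}$ by decorated permutations $(w,\Delta)$ to produce, in each $S_i$-orbit, a unique explicit flag in ``$i$-standard form'' (Theorem~\ref{thm:istd}).  From this canonical representative one reads off directly both the ambient $B$-orbit (determined by $w$) and the ambient $B^{i}$-orbit (determined by $\tau_{\Delta}w$, where $\tau_{\Delta}$ is a cycle built from $\Delta$).  Injectivity of $Q\mapsto(Q_B,Q_{B^{i}})$ then reduces to the elementary observation that $w$ and $\tau_{\Delta}$ together determine $(w,\Delta)$.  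No transversality or tangent-space computation enters.
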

Theorem \ref{thm:introiSh} allows us to generalize the notion of a Shareshian pair from \cite{Shpairs} to this setting.  Let $Q\in S_{i}\backslash\B_{n}$, let $\mathcal{E}$ be the standard upper triangular flag in $\C^{n}$, and let $W$ be the Weyl group of $G.$  Then by the Bruhat decomposition $B\cdot Q=B\cdot w(\mathcal{E})$ for some $w\in W$ and $B^{i}\cdot Q=B^{i}\cdot u^{i}(\mathcal{E}^{i})$ for some $u^{i}\in W$.  
We define the \emph{$i$-Shareshian} pair of $Q$ to be 
\begin{equation}\label{eq:introiShpair}
\Sh_{i}(Q)=(w,u^{i})\Leftrightarrow Q=B\cdot w(\mathcal{E})\cap B^{i}\cdot u^{i}(\mathcal{E}^{i})\neq\emptyset,
\end{equation}
and we call the map 
\begin{equation}\label{eq:introiShmap}
\Sh_{i}: S_{i}\backslash\B_{n}\to W\times W; \; Q\mapsto \Sh_{i}(Q),
\end{equation}
the $i$-\emph{Shareshian map}.

Theorem \ref{thm:introiSh} is proven by constructing a canonical set of representatives for the $S_{i}$-orbits on $\B_{n}$ in terms of flags, which we call \emph{flags in $i$-standard form} (Theorem \ref{thm:istd}).  Using the fact that every $S_{i}$-orbit on $\B_{n}$ contains a unique flag in $i$-standard form, we can generalize our combinatorial model for $\Borbitspace$ developed in Section 3 of \cite{CE21II} to $S_{i}\backslash\B_{n}$.  In \emph{loc. cit.}, the set of $B_{n-1}$-orbits on $\B_{n}$ are parameterized by partitions into lists (PILs) of the set $\{1,\dots, n\}$, where a list is any ordered subset of $\{1,\dots, n\}$.  We show that 
$S_{i}$-orbits are parameterized by pairs $(\sigma,\Sigma)$ where $\Sigma$ is a PIL of $\{1,\dots, n\}$ and $\sigma\in \Sigma$ is a list of size $n-i$ (Proposition \ref{p:countSi}).  We then use this result to compute exponential generating functions for the sequences $\{|S_{i}\backslash \B_{n}|\}_{n\geq 1}$ and $\{|G_{\Delta}\backslash(\B_{n}\times\B_{n}\times\mathbb{P}^{n-1})|\}_{n\geq 1}$ (Proposition \ref{p:egfforSi} and Theorem \ref{thm:bigegf} respectively).  We also show that $|S_{i}\backslash\B_{n}|$ is computed by a very simple recursion (Corollary \ref{c:Sirecursion}). 

In \cite{CE21I} and \cite{CE21II}, we studied an \emph{extended monoid} action on $\Borbitspace$ by simple roots of $\fk:=\fgl(n-1)$ and $\fg$.  This monoid action plays an important role in understanding the closure relations on $B_{n-1}\backslash\B_{n}$.  In this paper, we study an \emph{extended monoid action} on $S_{i}\backslash \B_{n}$ by a set of roots $\mathfrak{S}_i\coprod \Pi_{\fg}$, where $\mathfrak{S}_i\subset \Pi_{\fg}$ is a certain subset of simple roots.  In the case where $i=n$ the subset $\mathfrak{S}_i$ can be naturally identified with the standard simple roots for $\fk$.   The Weyl group $W$ has well-studied left and right monoid actions by simple roots of $\fg$, whose properties are closely tied to the Bruhat order on $W$.  We use this monoid action on $W$ to define left and right monoid actions by the simple roots $\mathfrak{S}_i$ and $\Pi_{\fg}$ respectively on $W\times W$.
\begin{thm}\label{thm:intromonoid}(Theorem \ref{thm:intertwine})
 The $i$-Shareshian map in Equation (\ref{eq:introiShmap}) intertwines the generalized extended monoid action on $S_{i}$-orbits on $\B_{n}$ with the classical monoid action on $W\times W$ described above, i.e., for a simple root $\alpha\in \mathfrak{S}_i\coprod\Pi_{\fg}$ and a $S_{i}$-orbit $Q$, 
 \begin{equation}\label{eq:intromonoids}
 \Sh_{i}(\ms*Q)=\ms*\Sh_{i}(Q).
 \end{equation}
 \end{thm}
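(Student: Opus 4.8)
The plan is to reduce the statement to the case of a single simple reflection acting on one side, and then to propagate the compatibility through the combinatorial dictionary furnished by the $i$-standard form. First I would recall that every $S_i$-orbit $Q$ contains a unique flag in $i$-standard form (Theorem \ref{thm:istd}), so that the pair $(w,u^i) = \Sh_i(Q)$ is read off concretely from that representative: $w$ records the $B$-orbit through the standard flag $\mathcal{E}$, and $u^i$ records the $B^i$-orbit through $\mathcal{E}^i$. The generalized extended monoid action $m(s_\alpha)*Q$ is, by definition, either the open orbit in $\pi_\alpha^{-1}(\pi_\alpha(Q))$ for $\alpha\in\Pi_{\fg}$ acting on the right (a $\mathbb{P}^1$-bundle move in $\B_n$), or the analogous construction for $\alpha\in\mathfrak{S}_i$ acting on the $B^i$-side; so the key is to track how each of these operations transforms the $i$-standard form.

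The main body of the argument has two symmetric halves. For $\alpha\in\Pi_{\fg}$ acting on the right: here $B\cdot Q = B\cdot w(\mathcal{E})$ already behaves exactly as in \cite{Shpairs}, since the monoid operation on the $B$-orbit component is literally the classical right monoid action $m(s_\alpha)*w$ on $W$ — this follows because the $\mathbb{P}^1$-bundle $\pi_\alpha$ and its effect on Schubert cells is the standard one, independent of which group ($B$, $B_{n-1}$, or $S_i$) we use to cut out orbits. What must be checked is that the $B^i$-component $u^i$ is unchanged, i.e.\ that $m(s_\alpha)*\Sh_i(Q) = (m(s_\alpha)*w,\, u^i)$; this is where the $i$-standard form does the work, since one shows that passing to the open orbit in the fiber does not move the flag out of the $B^i$-orbit it started in, equivalently that the intersection $B\cdot (m(s_\alpha)*w)(\mathcal{E}) \cap B^i\cdot u^i(\mathcal{E}^i)$ is again non-empty and equals $m(s_\alpha)*Q$. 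For $\alpha\in\mathfrak{S}_i$ acting on the left, the roles of $B$ and $B^i$ are interchanged: one uses the corresponding $\mathbb{P}^1$-fibration adapted to the flag $\mathcal{E}^i$, observes that this realizes the classical left monoid action $m(s_\alpha)*u^i$ on $W$, and checks the $B$-component $w$ is preserved. The two halves combine because left and right monoid actions commute, and because $\mathfrak{S}_i$ and $\Pi_{\fg}$ act on disjoint sides of $W\times W$.

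The hard part will be the bookkeeping in the second half: while the right action on the $B$-side is essentially identical to \cite{Shpairs}, the left action by $\mathfrak{S}_i$ on the $B^i$-side requires showing that the $\mathbb{P}^1$-bundle move for a simple root of $\mathfrak{S}_i\subset\Pi_{\fg}$, when transported to the $B^i$-picture via the element $u^i\in W$ conjugating $\mathcal{E}$ to $\mathcal{E}^i$, really does induce the \emph{classical} left monoid action on $W$ and not some twisted variant — and in particular that the subset $\mathfrak{S}_i$ is exactly the set of simple roots for which this works, with the excluded roots being precisely those whose $\mathbb{P}^1$-direction is tangent to the $S_i$-orbit structure in a way that collapses the move. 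Concretely I expect one must verify a short list of cases according to the relative position of $\alpha$ and the index $i$, using the explicit description of $\mathcal{E}^i$ from \eqref{eq:introiflag} and of flags in $i$-standard form, to confirm that the ``raising'' of an orbit in the fiber direction corresponds on the combinatorial side to the length-increasing operation $w\mapsto m(s_\alpha)*w$. Once this local computation is in hand, the global statement \eqref{eq:intromonoids} follows by the naturality of $\Sh_i$ established via Theorem \ref{thm:introiSh}, together with a routine check that the two one-sided verifications assemble consistently.
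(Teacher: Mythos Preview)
Your proposal contains a genuine error in its understanding of the target monoid action on $W\times W$. You assert that for $\alpha\in\Pi_{\fg}$ acting on the right, ``the $B^{i}$-component $u^{i}$ is unchanged,'' i.e.\ $m(s_{\alpha})*\Sh_{i}(Q)=(m(s_{\alpha})*w,\,u^{i})$, and symmetrically that for $\alpha\in\mathfrak{S}_{i}$ acting on the left the $B$-component $w$ is preserved. This is not how the action is defined. The restricted diagonal monoid action (Definition~\ref{d:diagonal}) acts on \emph{both} components simultaneously: the right action is $m(s_{\alpha})*_{R}(w,u^{i})=(m(s_{\alpha})*_{R}w,\,m(s_{\alpha^{i}})*_{R}u^{i})$ and the left action is $m(s_{\alpha})*_{L}(w,u^{i})=(m(s_{\alpha})*_{L}w,\,m(s_{\alpha})*_{L}u^{i})$. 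Geometrically, when you pass to the open $S_{i}$-orbit in $\pi_{\alpha}^{-1}(\pi_{\alpha}(Q))$ for $\alpha\in\Pi_{\fg}$, you generically move both the $B$-orbit \emph{and} the $B^{i}$-orbit containing $Q$; the claim that one of them is fixed is simply false (e.g.\ in Example~\ref{ex:S2orbits}, the right action by $\beta$ sends $(e,s)$ to $(t,st)$). Consequently the bookkeeping you propose --- verifying that one coordinate is ``unchanged'' --- cannot succeed, and the asserted intertwining relation $\Sh_{i}(m(s_{\alpha})*Q)=(m(s_{\alpha})*w,\,u^{i})$ is not the statement to be proved.

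The paper's proof avoids this pitfall by working not with the $i$-standard form directly but with the geometric description $Q=Q_{B}\cap Q_{B^{i}}$ from Theorem~\ref{thm:ShpairsforSi}. The key identity to establish is $m(s_{\alpha})*Q=(m(s_{\alpha})*Q_{B})\cap(m(s_{\alpha})*Q_{B^{i}})$, which immediately gives the diagonal form of the action on Shareshian pairs. One proves the two inclusions $m(s_{\alpha})*Q\subset m(s_{\alpha})*Q_{B}$ and $m(s_{\alpha})*Q\subset m(s_{\alpha})*Q_{B^{i}}$ separately: for the right action this is as in \cite{Shpairs}; for the left action by $\alpha\in\mathfrak{S}_{i}$ one uses that $m(s_{\alpha})*_{L}Q$ is the open $S_{i}$-orbit in $\mbox{Stab}_{P_{\alpha}}[e_{i}]\cdot\fb^{\prime}$, checks that this locus meets the open $B$-orbit in $P_{\alpha}\cdot\fb^{\prime}$ (the only subtle case being $\alpha$ complex stable for $Q_{B}$, where one uses that a representative of $s_{\alpha}$ lies in $\mbox{Stab}_{P_{\alpha}}[e_{i}]$ since $\alpha\in\mathfrak{S}_{i}$), and concludes by irreducibility; the argument for $Q_{B^{i}}$ is parallel using $\mbox{Stab}_{P_{\alpha}}[e_{i}]\subset P_{\alpha}^{i}$. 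Equality then follows from Corollary~\ref{c:intersect}. Your outline needs to be rewritten with this diagonal action in mind; the mechanism you describe, of one factor moving and the other staying put, describes neither the definition nor the geometry.
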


In the sequel, we will use Theorem \ref{thm:intromonoid} and results in Section 6 of \cite{CE21II} to show that every $S_{i}$-orbit on $\B_{n}$ is related in the weak order defined by the generalized extended monoid action to a zero dimensional $S_{i}$-orbit, implying that the closure ordering on $S_{i}\backslash \B_{n}$ is given by the standard order of Richardson and Springer. We will also show that 
the closure ordering on $S_{i}\backslash\B_{n}$ is given by the $i$-Shareshian order, i.e.,
\begin{equation}\label{eq:iShorder}
Q^{\prime}\subset \overline{Q}\Leftrightarrow \Sh_{i}(Q^{\prime})\leq \Sh_{i}(Q).
\end{equation}
Here $\Sh_{i}(Q)$ is given in (\ref{eq:introiShpair}) and the ordering on the $i$-Shareshian pairs is given by the product of Bruhat orders on $W\times W$.  In the second factor, the Bruhat order is twisted by using the simple reflections with respect to the Borel subalgebra $\fb^{i}$ which stabilizes the flag $\mathcal{E}^{i}$ in Equation (\ref{eq:introiflag}).  We prove this result in the sequel by establishing a bijection $\theta$ between $B$-orbits on $\B_n \times \mathbb{P}^{n-1}$  and $B$-orbits on an open $G$-stable subvariety $\mathfrak{X}$ of the flag variety $\B_{n+1}$ of $\fgl(n+1).$   Using our results from \cite{Shpairs}, we show that if $Q, Q^{\prime} \in B\backslash(\B_n \times \mathbb{P}^{n-1}),$ then $Q^{\prime} \subset \overline{Q}$ if and only if $\Sh_{n+1}(\theta(Q^{\prime})) \subset \overline{\Sh_{n+1}(\theta(Q))}$ and similar results hold for the monoid action.   As a consequence, we can understand the poset structure and monoid actions on $B$-orbits on $\B_n \times \mathbb{P}^{n-1}$ in terms of the well-understood analogous structures on $\Sigma_{n+1} \times \Sigma_{n+1}$.

This paper is organized as follows.  After introducing notation in Section \ref{s:prelim}, in Section \ref{s:reps}, we show each $S_i$-orbit is the intersection of a $B$-orbit and a $B^i$-orbit by using the notion of $i$-standard form for flags.  We use this assertion to define the $i$-Shareshian map from $S_i$-orbits on the flag variety to pairs of Weyl group elements and characterize the image.  In Section \ref{s:models}, we introduce the variant of PIL's needed to count $S_i$-orbits on $\B_n.$  As a consequence, we compute the exponential generating functions for the sequence of $S_{n-i}$-orbits on $\B_n$ and for $G_{\Delta}$-orbits on $\B_n \times \B_n \times \mathbb{P}^{n-1}$ and deduce formulas which simplify counting of $S_i$-orbits on $\B_n.$   Section \ref{s:monoid} is focused on constructing the extended monoid action for $S_{i}$-orbits on $\B_{n}$ and proving the compatibility of these monoid actions with classical monoid action on pairs of Weyl group elements.

We would like to thank Jeb Willebring for useful discussions regarding Section \ref{s:models} and John Shareshian for useful suggestions.

\section{Preliminaries and Notation}\label{s:prelim}

In this paper, all algebraic varieties are by convention complex algebraic varieties, and similarly with Lie algebras.  Let $G=GL(n)$ be the complex general linear group and $\fg=\fgl(n)$ be its Lie algebra.  Embed $GL(n-1)$ in $G$ as matrices fixing the standard basis vector $e_{n}$. We let $\fh\subset \fg$ denote the standard Cartan subalgebra of diagonal matrices and let $H\subset G$ be the corresponding algebraic group.  We let $\eps_{j}\in\fh^{*}$ be the linear functional on $\fh$ which acts on $x= \mbox{diag}[h_{1},\dots, h_{j},\dots, h_{n}]\in\fh$ by $\eps_{j}(x)=h_{j}$.  For an algebraic group $A$ with Lie algebra $\fa$, we denote the adjoint action of $A$ on $\fa$ by $\Ad(g)x$ for $g\in A$ and $x\in\fa$.  Abusing notation, we also denote the action of $A$ on itself by conjugation by $\Ad$, so that $\Ad(g)h:=ghg^{-1}$ for $g,\, h\in A$.  

 Throughout the paper, we identify the variety $\B_{n}$ of Borel subalgebras of $\fg$ with the variety
 $\mbox{Flag}(\C^{n})$ of full
flags in $\C^{n}$.  The group $G$ acts on $\B_{n}$ via the adjoint action.  Let $W=N_{G}(H)/H$ be the Weyl group of $G$.  If $\tilde{\fb}\in \B_{n}$ is an $H$-stable Borel subalgebra of $\fg$ which stabilizes the flag $\tilde{\mathcal{E}}$, then $\Ad(\dot{w}) \tilde{\fb}$ is independent of the choice of representative $\dot{w}$ of $w\in W$.  We write $\Ad(w)\tilde{\fb}$ or simply $w(\tilde{\fb})$ for the Borel subalgebra stabilizing the flag $w(\tilde{\mathcal{E}})$.  

 We make heavy use of the following notation for flags throughout the paper.  
\begin{nota}\label{nota:standard} Let 
   $$
  \mathcal{F}=(V_{1}\subset V_{2}\subset\dots\subset V_{i}\subset V_{i+1}\subset \dots)
   $$
be a flag in $\C^{n}$, with $\dim V_{i}=i$ and $V_{i}=\mbox{span}\{v_{1},\dots, v_{i}\}$, with each $v_{j}\in\C^{n}$.  We will denote this flag $\mathcal{F}$ 
by
   $$
  \mathcal{F}=  (v_{1}\subset  v_{2}\subset\dots\subset v_{i}\subset v_{i+1}\subset\dots ). 
   $$
\end{nota}
\noindent For a vector $v\in\C^{n}$, we denote the line through $v$ containing the origin by $[v]\in\mathbb{P}^{n-1}$.


As mentioned in the Introduction, $G$ acts diagonally on the triple product $\B_{n}\times\B_{n}\times \mathbb{P}^{n-1}$ with finitely many orbits and we denote the set of these orbits by $G_{\Delta}\backslash (\B_{n}\times\B_{n}\times\mathbb{P}^{n-1})$.  These orbits are of course in one-to-one correspondence with 
diagonal $B$-orbits on the product $\B_{n}\times \mathbb{P}^{n-1}$ by  
\begin{equation}\label{eq:Gdeltacorres}
G_{\Delta}\backslash(\B_{n}\times\B_{n}\times\mathbb{P}^{n-1})\longleftrightarrow B\backslash (\B_{n}\times \mathbb{P}^{n-1}) \mbox{ given by } G_{\Delta}\cdot (\fb,\fb^{\prime},[v])\leftrightarrow B\cdot (\fb^{\prime}, [v]),
\end{equation}  
where $\fb$ is the Lie algebra of $B.$

To describe the $B$-orbits on $\mathbb{P}^{n-1}$, let $\{e_{1},\dots, e_{n}\}$ denote the standard basis of $\C^{n}$.  Then there are exactly $n$ $B$-orbits on $\mathbb{P}^{n-1}$, which are  given by $\mathcal{O}_{i}:=B\cdot[e_{i}]$ for $i=1,\dots, n$.  We can decompose the set $B\backslash (\B_{n}\times\mathbb{P}^{n-1})$ as
\begin{equation}\label{eq:orbitspacedecomp}
B\backslash (\B_{n}\times\mathbb{P}^{n-1})=\coprod_{i=1}^{n} B\backslash (\B_{n}\times\mathcal{O}_{i}).
\end{equation}
Fix an $i$ with $1\leq i \leq n$ and consider the set of $B$-orbits $B\backslash (\B_{n}\times\mathcal{O}_{i}).$  Let
$S_{i}:=\mbox{Stab}_{B}([e_{i}])$.  Then we have a one-to-one correspondence 
\begin{equation}\label{eq:Sicorres}
B\backslash (\B_{n}\times\mathcal{O}_{i})\longleftrightarrow S_{i}\backslash \B_{n} \mbox{ given by } B\cdot(\fb_1,[e_{i}])\longleftrightarrow S_{i}\cdot\fb_1.
\end{equation} 
Our main object of study in this paper is the set of orbits $S_{i}\backslash\B_{n}$.  
\begin{rem}\label{r:specialcase}
We note that if $i=1$, then $S_{1}=B$ and the $B$-orbits on $\B_{n}$ are given by the Bruhat decomposition and their closure ordering is given by the Bruhat ordering on $W$.  If $i=n$, then up to centre $S_{n}$ coincides with $B_{n-1}$ and the structure of $B_{n-1}\backslash\B_{n}$ is described extensively in \cite{Hashi}, \cite{CE21I}, \cite{CE21II}, \cite{Shpairs}, and other papers. 
\end{rem}

\section{Representatives for $S_{i}$-orbits and $i$-Shareshian pairs}\label{s:reps}
Let $S_{i}=\mbox{Stab}_{B}([e_{i}])$ as in the previous section.  Let 
$\fb^{i}\in\B_{n}$ be the Borel subalgebra which stabilizes the flag $\mathcal{E}^{i}$ in (\ref{eq:introiflag}):
\begin{equation}\label{eq:Fiflag}
\mathcal{E}^{i}=(e_{i}\subset e_{1}\subset\dots\subset e_{i-1}\subset e_{i+1}\subset \dots\subset e_{n}),
\end{equation}
and let $B^{i}$ be the corresponding Borel subgroup of $G.$
Our goal in this section is to prove the following result:
\begin{thm}\label{thm:ShpairsforSi}
Let $Q=S_{i}\cdot\fb^{\prime}$ be the $S_{i}$-orbit through $\fb^{\prime}\in\B_{n}$ and let $Q_{B}:=B\cdot\fb^{\prime}$ and $Q_{B^{i}}:=B^{i}\cdot\fb^{\prime}$ 
be the $B$ and $B^{i}$-orbits through $\fb^{\prime}\in\B_{n}$ respectively.  Then $Q=Q_{B}\cap Q_{B^{i}}$.  In particular, every $S_{i}$-orbit on $\B_{n}$ is the intersection of a $B$-orbit and a $B^{i}$-orbit on $\B_{n}$. 
\end{thm}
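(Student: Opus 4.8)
The plan is to prove both inclusions of $Q = Q_B \cap Q_{B^i}$, where the nontrivial direction is $Q_B \cap Q_{B^i} \subseteq Q$, i.e., that any two flags lying in the same $B$-orbit \emph{and} the same $B^i$-orbit are actually in the same $S_i$-orbit. The inclusion $Q \subseteq Q_B \cap Q_{B^i}$ is immediate from $S_i \subseteq B$ and $S_i \subseteq B^i$; the latter holds because $S_i = \mathrm{Stab}_B([e_i])$ fixes the line $[e_i]$, and one checks directly that the full flag $\mathcal{E}^i$ of (\ref{eq:Fiflag}) is stabilized by $S_i$: elements of $S_i$ are upper triangular matrices whose $i$-th column is a multiple of $e_i$, and such matrices preserve each $\mathcal{E}^i_\ell$ (for $\ell < i$ this uses precisely that the $i$-th column has no components in directions $e_1,\dots,e_{\ell-1}$; for $\ell \geq i$ it is upper-triangularity). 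Hence $S_i \subseteq B \cap B^i$, giving one inclusion.

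First I would reduce the hard inclusion to a statement about a single convenient representative. By the main construction of the section (Theorem \ref{thm:istd}, flags in $i$-standard form), every $S_i$-orbit on $\B_n$ contains a canonical representative, so it suffices to show: if $\fb_1, \fb_2$ are two flags in $i$-standard form with $B\cdot\fb_1 = B\cdot\fb_2$ and $B^i\cdot\fb_1 = B^i\cdot\fb_2$, then $\fb_1 = \fb_2$. Equivalently, I would show that the pair of Bruhat data $(w, u^i)$ attached to a flag $\fb'$ — where $B\cdot\fb' = B\cdot w(\mathcal{E})$ and $B^i\cdot\fb' = B^i\cdot u^i(\mathcal{E}^i)$ — determines the $i$-standard-form representative uniquely. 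This is where the explicit normal form does the work: the $i$-standard form pins down the flag by a combination of ``leading term'' (pivot) data that is read off from the $B$-orbit and finer position data of the basis vector $e_i$ inside the flag that is read off from the $B^i$-orbit, and no further freedom remains once both are fixed.

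The key steps, in order, would be: (1) establish $S_i \subseteq B \cap B^i$ as above, giving $Q \subseteq Q_B \cap Q_{B^i}$; (2) invoke Theorem \ref{thm:istd} to replace $\fb_1,\fb_2$ by $i$-standard representatives of their $S_i$-orbits; (3) show $B\cdot\fb_1 = B\cdot\fb_2$ forces the two normal forms to agree in their ``$B$-part'' (the underlying PIL / Schubert-cell data), using that distinct $i$-standard forms lying in the same $B$-orbit can differ only in the placement of the entry $i$; (4) show $B^i\cdot\fb_1 = B^i\cdot\fb_2$ then forces that remaining placement to coincide as well, by tracking where $e_i$ sits relative to the other pivots under the $B^i$-flag $\mathcal{E}^i$; (5) conclude $\fb_1 = \fb_2$, hence $S_i\cdot\fb_1 = S_i\cdot\fb_2$, so the fiber of the map $Q \mapsto (Q_B, Q_{B^i})$ is a single $S_i$-orbit, proving $Q_B \cap Q_{B^i} \subseteq Q$.

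The main obstacle will be step (4): carefully quantifying exactly how much information about an $S_i$-orbit is lost when passing to its $B$-orbit, and verifying that precisely this lost information is recovered by the $B^i$-orbit and nothing is over- or under-counted. Concretely, I expect the crux to be a dimension/count matching — the $i$-standard forms in a fixed $B$-orbit are in bijection with the admissible positions for the symbol $i$, and these must correspond bijectively to the $B^i$-orbits meeting that $B$-orbit — which I would verify either directly from the combinatorial description of $i$-standard form or, after Section \ref{s:models}, by comparing the parameterization of $S_i\backslash\B_n$ by pairs $(\sigma,\Sigma)$ with the parameterizations of $B\backslash\B_n$ and $B^i\backslash\B_n$ by PILs. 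Everything else is bookkeeping with upper-triangular matrices and the explicit flag $\mathcal{E}^i$.
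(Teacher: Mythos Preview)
Your overall strategy matches the paper's: use the $i$-standard representatives from Theorem~\ref{thm:istd} and show that two $i$-standard flags with the same $B$-orbit and the same $B^{i}$-orbit must coincide. The easy inclusion and the reduction via Theorem~\ref{thm:istd} are exactly as in the paper.

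Where your plan drifts is in steps (3)--(4). Your description of the residual freedom after fixing the $B$-orbit is not accurate: fixing $B\cdot\mathcal{F}$ fixes the permutation $w$ with $\tilde{\mathcal F}=w(\mathcal E)$, but the remaining freedom is the entire decreasing subsequence $\Delta=\{j_{1}<\dots<j_{k-1}<i\}$ for $w^{-1}$, not merely ``the placement of the entry $i$''. Different choices of $\Delta$ change which vectors carry hats and cyclically permute their positions, so there can be many $i$-standard forms in a single $B$-orbit. Because of this, the counting/dimension-matching argument you propose for step (4) would have to control a genuinely multi-parameter family and does not reduce to a single position check.

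The paper sidesteps this entirely with one explicit computation (Proposition~\ref{p:iandtildeflags} and Remark~\ref{r:combinatorial}): for an $i$-standard flag $\mathcal F$ one has $\mathcal F^{i}=\tau_{\Delta}(\tilde{\mathcal F})$, where $\tau_{\Delta}=(i,j_{k-1},\dots,j_{1})$. Hence if two $i$-standard flags share both $\tilde{\mathcal F}$ and $\mathcal F^{i}$, they share $w$ and the cycle $\tau_{\Delta}$, so they share $\Delta$, and therefore they are equal. No counting is needed; the formula $\tau_{\Delta}=u^{i}\sigma_{i}w^{-1}$ (equivalently $\mathcal F^{i}=\tau_{\Delta}\tilde{\mathcal F}$) is the missing ingredient that turns your step (4) from an ``obstacle'' into a one-line check.
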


Note that $B\cap B^{i}=S_{i}$.  Therefore, if $Q_{1}$ (resp. $Q_{2}$) is a $B$-orbit  (resp. a $B^{i}$-orbit) on $\B_{n}$, and the intersection $Q_{1}\cap Q_{2}$ is non-empty, it contains an $S_{i}$-orbit $Q=S_{i}\cdot\fb^{\prime}$.  Thus, by Theorem \ref{thm:ShpairsforSi}, we deduce that $Q_{1}=Q_{B}$ and $Q_{2}=Q_{B^{i}}$, and
\begin{equation}\label{eq:intersect}
Q=Q_{B}\cap Q_{B^{i}}.
\end{equation}
Therefore, Theorem \ref{thm:ShpairsforSi} implies 
\begin{cor}\label{c:intersect}
The $S_{i}$-orbits on $\B_{n}$ are precisely the non-empty intersections of $B$ and $B^{i}$-orbits on $\B_{n}$. 
\end{cor}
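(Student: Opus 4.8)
\noindent\emph{Proof strategy.}
The inclusion $Q\subseteq Q_{B}\cap Q_{B^{i}}$ is immediate, since $S_{i}\subseteq B$ and $S_{i}\subseteq B^{i}$; recall here that $B\cap B^{i}=S_{i}$, because an element of $B$ stabilizing the line $[e_{i}]$ must send $e_{i}$ to a scalar multiple of $e_{i}$ and hence preserves every subspace $\mathcal{E}^{i}_{\ell}$ of the flag $\mathcal{E}^{i}$. Since $Q_{B}\cap Q_{B^{i}}$ is locally closed and $S_{i}$-stable, for the reverse inclusion it is enough to show that the map
\[
S_{i}\backslash\B_{n}\longrightarrow (B\backslash\B_{n})\times(B^{i}\backslash\B_{n}),\qquad Q\longmapsto (Q_{B},\,Q_{B^{i}}),
\]
is injective: given $x\in Q_{B}\cap Q_{B^{i}}$ one has $B\cdot x=Q_{B}$ and $B^{i}\cdot x=Q_{B^{i}}$, so injectivity forces $S_{i}\cdot x=Q$, i.e.\ $x\in Q$. (Under the Bruhat identifications $B\backslash\B_{n}\cong W\cong B^{i}\backslash\B_{n}$ this is exactly the injectivity of the $i$-Shareshian map $\Sh_{i}$.) Corollary~\ref{c:intersect} is then formal: if a $B$-orbit and a $B^{i}$-orbit meet in some $\fb'$, then applying the theorem to $S_{i}\cdot\fb'$ identifies them as $Q_{B}$ and $Q_{B^{i}}$ and their intersection as $S_{i}\cdot\fb'$.

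To prove this injectivity I would give every $S_{i}$-orbit a canonical representative, a flag in \emph{$i$-standard form}: an explicitly prescribed flag in the coordinate vectors $e_{1},\dots,e_{n}$ in which the special vector $e_{i}$ plays a distinguished role, so that each term $v_{j}$ of the flag is either a coordinate vector $e_{k}$ or an $e_{i}$-decorated vector, subject to combinatorial constraints. I would establish: \textbf{(A)} every $S_{i}$-orbit contains a flag in $i$-standard form (this is Theorem~\ref{thm:istd}); and \textbf{(B)} two flags in $i$-standard form lying in a common $B$-orbit and a common $B^{i}$-orbit coincide. Given (A) and (B), the map above is injective: if $S_{i}$-orbits $Q,Q'$ satisfy $Q_{B}=Q'_{B}$ and $Q_{B^{i}}=Q'_{B^{i}}$, pick $i$-standard representatives $\mathcal{F}\in Q$ and $\mathcal{F}'\in Q'$; then $\mathcal{F}$ and $\mathcal{F}'$ lie in the same $B$-orbit and the same $B^{i}$-orbit, so $\mathcal{F}=\mathcal{F}'$ by (B) and therefore $Q=Q'$. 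Note that (B) also yields uniqueness of the $i$-standard representative in each orbit, which is what makes it useful for the combinatorial model and the generating-function computations later in the paper.

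For (A) I would run a Gauss-type normalization on a spanning sequence $v_{1},\dots,v_{n}$ of a given flag. The moves available are: rescaling some $v_{j}$; replacing $v_{j}$ by $v_{j}$ plus a linear combination of $v_{1},\dots,v_{j-1}$; and acting by $S_{i}$, which realizes every upper-triangular column operation \emph{except} those that add a multiple of an $e_{k}$ with $k<i$ into the $e_{i}$-coordinate. Clearing entries from the bottom coordinate upward, while bookkeeping the single ``protected'' direction $e_{i}$ (whose presence is exactly what forces the $e_{i}$-decorated vectors to occur), should carry any flag to $i$-standard form. For (B) I would induct on $j$: the $B$-orbit of a flag records the ranks $\dim\bigl(V_{j}\cap\langle e_{1},\dots,e_{k}\rangle\bigr)$, and its $B^{i}$-orbit records the ranks $\dim\bigl(V_{j}\cap\mathcal{E}^{i}_{k}\bigr)$, where $V_{j}$ denotes the $j$-th subspace of the flag; and for a flag in $i$-standard form these two rank arrays determine, one step at a time, whether $v_{j}$ is a fresh coordinate vector or an $e_{i}$-decoration and which coordinate is involved. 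This is in effect the construction of $\Sh_{i}$ together with the proof of its injectivity, generalizing the analogous result of \cite{Shpairs} for $i=n$.

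I expect the whole difficulty to sit in (A)--(B), i.e.\ in setting up $i$-standard form correctly. In (A), the row-reduction itself is routine; the real work is choosing the combinatorial constraints so that the normalization terminates at a \emph{unique} flag per orbit, and because $e_{i}$ is treated unlike every other coordinate this requires extra bookkeeping and genuinely refines the partitions-into-lists model of \cite{CE21II}. The harder point is (B): one must show that the two Bruhat rank profiles together really separate distinct $i$-standard flags, and pushing the inductive comparison of these profiles up the flag through all the cases --- coordinate vector versus $e_{i}$-decoration, and the interaction of the $e_{i}$-direction with the remaining coordinates --- is the crux of the argument.
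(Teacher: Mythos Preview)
Your deduction of the Corollary from Theorem~\ref{thm:ShpairsforSi} is exactly the paper's: since $B\cap B^{i}=S_{i}$, any non-empty intersection of a $B$-orbit and a $B^{i}$-orbit contains an $S_{i}$-orbit, and the Theorem forces that intersection to be a single $S_{i}$-orbit. So on the level of the Corollary itself there is nothing to compare.

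Where you diverge is in your sketch of Theorem~\ref{thm:ShpairsforSi} and Theorem~\ref{thm:istd}. You propose (A) a direct Gauss-type normalization to reach $i$-standard form and (B) an inductive rank-profile argument to show that the pair of Bruhat data (with respect to $\mathcal{E}$ and $\mathcal{E}^{i}$) pins down an $i$-standard flag. The paper instead leans on Magyar's classification of $B$-orbits on $\B_{n}\times\mathbb{P}^{n-1}$ by decorated permutations (Theorem~\ref{thm:magyar}). For existence it starts from a decorated permutation $(w,\Delta)\in\widehat{\mathcal{S}}_{n}(i)$ and writes down an explicit $S_{i}$-conjugation carrying the associated flag to $i$-standard form; for uniqueness and for injectivity of $\Sh_{i}$ it reads off from an $i$-standard flag $\F$ the explicit formulas $\tilde{\F}=w(\mathcal{E})$ and $\F^{i}=\tau_{\Delta}w(\mathcal{E})$ (Proposition~\ref{p:iandtildeflags}), so that knowing both $B$- and $B^{i}$-orbits recovers $w$ and the cycle $\tau_{\Delta}$, hence $\Delta$, hence $\F$. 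Your rank-profile step (B) is morally the same conclusion reached by hand, and it would go through, but the paper's route is shorter precisely because the Magyar parametrization packages the bookkeeping: the paper itself remarks (Remark~\ref{p:std}) that this is ``far simpler'' than the direct row-reduction argument used in the $i=n$ case in \cite{CE21II}. What your approach buys is self-containment---you never need to invoke or reprove Magyar's theorem---at the price of the case analysis you flag at the end.
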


To prove Theorem \ref{thm:ShpairsforSi}, we need to find a set of 
representatives for the $S_{i}$-orbits on $\B_{n}$ in terms of flags.  For this, we introduce the following definition (cf. Definition 4.1 of \cite{CE21II}).  
\begin{dfn}\label{d:istd}
\begin{enumerate}
\item For a standard basis vector $e_{j}\in\C^{n}$ with $1\leq j\leq i-1$, we define $\he_{j}:=e_{j}+e_{i}$ and refer to $\he_{j}$ as a hat vector of index $j.$
\item We say that a flag 
\begin{equation}\label{eq:basicflag}
\mathcal{F}:=(v_{1}\subset \dots \subset v_{i}\subset\dots\subset v_{n})
\end{equation}
in the flag variety $\B_{n}$ of $G$
is in \emph{$i$-standard form} if $v_{k}=e_{j_{k}}$ or $v_{k}=\he_{j_{k}}$ for all $k=1,\dots, n$, and 
$\mathcal{F}$ satisfies the following three conditions:
\begin{enumerate}
\item $v_k = e_i$ for some $k$.
\item If $v_{k}=e_{i}$, then $v_{\ell}=e_{j_{\ell}}$ for all $\ell>k$.
 \item If $\ell<k$ with $v_{\ell}=\he_{j_{\ell}}$ and $v_{k}=\he_{j_{k}}$, then $j_{\ell}>j_{k}$. 
\end{enumerate}
\end{enumerate}
\end{dfn}

\begin{rem}\label{r.glstandard}
If $\mathcal{F}=(v_{1}\subset \dots \subset v_{k}\subset\dots\subset v_{n})$ is a flag in $i$-standard form, then it follows easily that if $v_{k} = e_{j_{k}}$ or $\he_{j_{k}}$ for $k=1,\dots, n$, then $\{ j_1, \dots, j_n \} = \{ 1, \dots, n \}$ consists of $n$ distinct indices.
\end{rem}

\begin{thm}\label{thm:istd}
 Every $S_{i}$-orbit on $\B_{n}$ contains a unique flag in $i$-standard form.  
 \end{thm}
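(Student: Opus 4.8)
The plan is to prove existence and uniqueness separately, both by essentially explicit manipulations on flags, working with the $S_i$-action as an action by upper-triangular matrices that stabilize the line $[e_i]$.

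For \textbf{existence}, I would start with an arbitrary flag $\mathcal{F}=(v_1\subset\cdots\subset v_n)$ and produce an $S_i$-translate in $i$-standard form by a downward induction on $k$, choosing the $v_k$ from top to bottom. The key structural input is that $S_i$ contains all the unipotent root subgroups $U_{\eps_a-\eps_b}$ for the pairs $(a,b)$ with $a<b$ except that one must avoid subtracting the $i$-th coordinate freely: concretely, $S_i = \{ g \in B : g e_i \in \C^* e_i\}$, so $g\cdot e_i = \lambda e_i$ and $g$ may add $e_i$ to any later basis vector but the entry in row $i$ of columns $>i$ is constrained. At each stage I pick the largest index $j$ such that $e_j$ (or, when $j<i$, the hat vector $\he_j = e_j + e_i$) can be ``installed'' as $v_k$ using the available $S_i$-operations to clear out the other coordinates; the case analysis splits on whether the line $V_k/V_{k-1}$, after clearing higher basis vectors, points in a direction involving $e_i$ or not, which is exactly what forces the dichotomy $v_k = e_{j_k}$ versus $v_k=\he_{j_k}$. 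Condition (b) is automatic once we observe that once $e_i$ itself appears in the flag, all remaining room is spanned by $e_j$'s with the $i$-th coordinate already used up, so no later vector can be a hat vector; condition (c) (the hat indices strictly decrease going up the flag) is arranged by the greedy ``largest available index'' choice, since installing $\he_{j_k}$ at step $k$ uses up the ability to involve $e_{j_k}$ higher in the flag.

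For \textbf{uniqueness}, suppose two flags $\mathcal{F}$ and $\mathcal{F}'$ in $i$-standard form lie in the same $S_i$-orbit, say $g\cdot \mathcal{F} = \mathcal{F}'$ with $g\in S_i\subset B$. Since both flags have the form described in Remark \ref{r.glstandard}, each $v_k$ is $e_{j_k}$ or $e_{j_k}+e_i$ with $\{j_1,\dots,j_n\}=\{1,\dots,n\}$; the strategy is to compare the two underlying permutations $k\mapsto j_k$ and $k\mapsto j_k'$ and the two patterns of hats, and show $g$ being upper-triangular and fixing $[e_i]$ forces them to agree. The cleanest way is to read off $B$-invariants and $B^i$-invariants: a flag in $i$-standard form determines its $B$-orbit (hence the permutation $w$ recording where the standard subspaces jump) and simultaneously its $B^i$-orbit (the analogous data with respect to the flag $\mathcal{E}^i$), and I would show the pair (permutation of indices, set of positions carrying a hat) is recovered from these two pieces of orbit data. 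Then two $i$-standard flags in the same $S_i$-orbit have the same $B$-orbit and the same $B^i$-orbit, hence the same combinatorial data, hence are equal. Note this route makes Theorem \ref{thm:istd} and Theorem \ref{thm:ShpairsforSi} mutually supporting: the canonical form gives the $S_i$-orbit representatives, and uniqueness is most naturally phrased via the $B$- and $B^i$-orbit invariants; care is needed to avoid circularity, so I would prove uniqueness directly at the level of flags (comparing $gv_k$ with $v_k'$ inductively in $k$, using that $g$ is upper triangular so $g v_k$ has the same or larger ``leading index'' and that fixing $[e_i]$ pins down where the hat coordinate can go) rather than quoting Theorem \ref{thm:ShpairsforSi}.

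The \textbf{main obstacle} is the existence step, specifically the bookkeeping in the greedy reduction: one must verify at each stage that the coordinate-clearing operations needed to bring $v_k$ into the form $e_{j_k}$ or $\he_{j_k}$ genuinely lie in $S_i$ (not merely in $B$), and that these operations do not disturb the already-normalized vectors $v_{k+1},\dots,v_n$ above. This is where the constraint ``$g$ acts on $e_i$ by a scalar'' really bites, and it is the reason hat vectors are needed at all: a coordinate along $e_i$ in some $v_k$ with $j_k<i$ cannot always be cleared, only normalized to coefficient $1$, producing $\he_{j_k}$. Making the induction hypothesis strong enough to carry conditions (a), (b), (c) simultaneously through the reduction — in particular ensuring that once $e_i$ is placed no hat vectors appear below it and that hat indices come out strictly decreasing — is the delicate part; everything else is linear algebra over $\C$ with triangular matrices.
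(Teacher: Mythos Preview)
Your outline is a workable strategy, but it takes a genuinely different route from the paper. The paper does \emph{not} carry out a greedy reduction on an arbitrary flag; instead it leans on Magyar's classification (Theorem~\ref{thm:magyar}) of $B$-orbits on $\B_n\times\mathbb{P}^{n-1}$ by decorated permutations $(w,\Delta)\in\widehat{\mathcal{S}}_n(i)$. For existence, the paper starts from the representative $(w(\fb),[v_\Delta])$ of $\mathcal{O}_{(w,\Delta)}$, applies a single explicit $b\in B$ with $b\cdot[v_\Delta]=[e_i]$, and then writes down a short, explicit product $x_2\cdots x_{k-1}\in S_i$ (each $x_m$ a unipotent element moving only $e_{j_m}$) that converts $b\cdot w(\mathcal{E})$ into the flag in $i$-standard form of Equation~(\ref{eq:secondflag}); no inductive bookkeeping over positions in the flag is needed. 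For uniqueness, the paper reads off from any $i$-standard flag $\F$ the pair $(w_\F,\Delta_\F)$ (the permutation giving $\tilde{\F}$ and the set of hat indices together with $i$), checks this is an $i$-decorated permutation with $Q=Q_{(w_\F,\Delta_\F)}$, and then invokes the uniqueness in Magyar's theorem to conclude $\F=\mathcal{G}$ for any other $i$-standard $\mathcal{G}$ in $Q$.

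What each buys: your direct approach is self-contained and avoids importing Magyar's result, at the cost of exactly the bookkeeping you flag as the main obstacle (verifying the clearing operations lie in $S_i$ and don't disturb already-normalized vectors, and that the greedy choice really forces condition~(c)). The paper's approach trades that bookkeeping for the clean combinatorics of decorated permutations; indeed Remark~\ref{p:std} explicitly says that Magyar's parametrization makes this proof far simpler than the direct argument used for the $i=n$ case in \cite{CE21II}. Your uniqueness idea via simultaneous $B$- and $B^i$-invariants is close in spirit to what the paper does, but the paper uses the $(w,\Delta)$ invariant from Magyar rather than the pair $(Q_B,Q_{B^i})$; this sidesteps any circularity with Theorem~\ref{thm:ShpairsforSi}, which in the paper is a \emph{consequence} of Theorem~\ref{thm:istd} rather than an input to it.
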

 
 \begin{rem}\label{r:usual}
Note that when $i=n$ the definition of a flag in $n$-standard form in Definition \ref{d:istd} coincides with the definition of a flag in standard form given in Definition 4.1 in \cite{CE21II}.  In this case, we will drop the ``n" from $n$-standard form.  We showed in Theorem 4.7 of \emph{loc. cit.} that each $B_{n-1}$-orbit on $\B_{n}$ contains a unique flag in standard form. 
\end{rem}

To prove Theorem \ref{thm:istd}, we first recall 
the parameterization of $B\backslash(\B_{n}\times\mathbb{P}^{n-1})$ given by Magyar.

\begin{dfn}\label{d:marked}
We define a \emph{decorated permutation} of $\{1,\dots, n\}$ to be a pair $(w,\Delta)$ where $w\in\Sigma_{n}$ and $\Delta=\{j_{1},\dots, j_{k}\}$ is a nonempty subset of $\{1,\dots,n\}$ which is a decreasing sequence for $w^{-1}$, i.e., $j_1 < \dots < j_{k}$ but $w^{-1}(j_{k})<\dots< w^{-1}(j_{1})$.
\end{dfn}

We denote the set of all decorated permutations of $\{1,\dots, n\}$ by $\widehat{\Sigma}_{n}$.  Given  $(w,\Delta) \in \widehat{\Sigma}_{n}$ with 
$\Delta=\{j_{1},\dots, j_{k}\}$ as above,
 we consider the $B$-orbit 
\[
{\mathcal{O}}_{(w,\Delta)}:= B\cdot (w(\fb), [v_{\Delta}]), \ \text{ where } v_{\Delta}=e_{j_1}+\dots + e_{j_k}.
\]

\begin{thm}\label{thm:magyar}\cite{Magyar}
Each $B$-orbit on $\B_{n}\times\mathbb{P}^{n-1}$ is of the form ${\mathcal{O}}_{(w,\Delta)}$ for a unique decorated permutation $(w,\Delta).$
\end{thm}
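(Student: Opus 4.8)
\textbf{Proof proposal for Theorem \ref{thm:magyar} (via Theorem \ref{thm:istd}).}

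The plan is to prove Magyar's parameterization by translating it into a statement about $S_i$-orbits using the correspondences \eqref{eq:orbitspacedecomp} and \eqref{eq:Sicorres}, and then appealing to the canonical-representative result of Theorem \ref{thm:istd}. The key observation is that a decorated permutation $(w,\Delta)$ with $\Delta = \{j_1 < \dots < j_k\}$ has a well-defined \emph{type} $i$, namely the position of $j_k$ in the one-line notation of $w^{-1}$ — more precisely, $i$ should be chosen so that the line $[v_\Delta] = [e_{j_1} + \dots + e_{j_k}]$ lies in the $B$-orbit $\mathcal{O}_i$ on $\mathbb{P}^{n-1}$. Since $B \cdot [e_{j_1} + \dots + e_{j_k}] = \mathcal{O}_m$ where $m = \max\{j_1,\dots,j_k\} = j_k$, each decorated permutation of type $i$ contributes an orbit to $B\backslash(\B_n \times \mathcal{O}_i)$, and conversely. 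So the theorem reduces to: for each fixed $i$, the map $(w,\Delta) \mapsto \mathcal{O}_{(w,\Delta)}$ restricts to a bijection from decorated permutations of type $i$ onto $B\backslash(\B_n \times \mathcal{O}_i) \cong S_i\backslash\B_n$.

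First I would set up the dictionary between flags in $i$-standard form and decorated permutations of type $i$. Given a flag $\mathcal{F} = (v_1 \subset \dots \subset v_n)$ in $i$-standard form, each $v_k$ is either $e_{j_k}$ or $\he_{j_k} = e_{j_k} + e_i$, and by Remark \ref{r.glstandard} the indices $j_1,\dots,j_n$ are a permutation of $1,\dots,n$; one reads off $w \in \mathcal{S}_n$ from the underlying permutation $k \mapsto j_k$ and sets $\Delta$ to be the set of indices $j_\ell$ for which $v_\ell$ is a hat vector, together with $i$ itself. Condition (c) of Definition \ref{d:istd} (hat indices appear in strictly decreasing order of $j$) is exactly the condition that $\Delta$ be decreasing for $w^{-1}$, and conditions (a),(b) pin down that the line attached to $\mathcal{F}$ under the correspondence $S_i\backslash\B_n \leftrightarrow B\backslash(\B_n\times\mathcal{O}_i)$ is $[e_i]$, forcing $j_k = i$ for the index $k$ with $v_k = e_i$ and hence putting $(w,\Delta)$ in type $i$. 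Conversely, from a type-$i$ decorated permutation one builds back the $i$-standard flag. I would check that under the correspondence \eqref{eq:Sicorres} the $S_i$-orbit of the $i$-standard flag associated to $(w,\Delta)$ maps to the $B$-orbit $\mathcal{O}_{(w,\Delta)}$ — this amounts to identifying $B\cdot(w(\fb),[v_\Delta])$ with $B\cdot(\fb', [e_i])$ for $\fb'$ the Borel stabilizing the $i$-standard flag, which is a routine matrix computation using that any line in $\mathcal{O}_i$ can be moved to $[e_i]$ by $B$ and tracking the induced change on the flag.

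With this dictionary in hand, Theorem \ref{thm:magyar} follows: surjectivity and well-definedness of $(w,\Delta) \mapsto \mathcal{O}_{(w,\Delta)}$ onto $B\backslash(\B_n\times\mathbb{P}^{n-1})$ come from the fact that every $S_i$-orbit contains \emph{some} flag in $i$-standard form (the existence half of Theorem \ref{thm:istd}), applied across all $i$ and combined with \eqref{eq:orbitspacedecomp}; uniqueness of the decorated permutation representing a given $B$-orbit comes from the \emph{uniqueness} half of Theorem \ref{thm:istd} together with the injectivity of the flag-to-decorated-permutation dictionary above. The main obstacle I anticipate is purely bookkeeping rather than conceptual: carefully matching Magyar's conventions for $\mathcal{O}_{(w,\Delta)} = B\cdot(w(\fb),[v_\Delta])$ — in particular which flag $w(\fb)$ stabilizes and how $v_\Delta$ sits relative to it — with the $i$-standard flag conventions of Definition \ref{d:istd}, so that the hat vectors $e_{j_\ell} + e_i$ correspond precisely to the summands of $v_\Delta$ and the "type" $i$ is read off consistently. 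Once the conventions are aligned, the argument is a formal consequence of Theorem \ref{thm:istd}. (Alternatively, one may simply cite \cite{Magyar} directly, as the theorem is stated there; the above sketch indicates how it is recovered within the present framework.)
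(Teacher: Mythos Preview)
Your proposal is circular in the logical structure of this paper. You want to derive Theorem~\ref{thm:magyar} from Theorem~\ref{thm:istd}, but the paper's proof of Theorem~\ref{thm:istd} uses Theorem~\ref{thm:magyar} in an essential way: existence of an $i$-standard representative starts from a decorated permutation $(w,\Delta)\in\widehat{\mathcal{S}}_n(i)$ (which presupposes the surjectivity in Magyar's theorem), and uniqueness is deduced by showing that two $i$-standard flags in the same orbit give the same decorated permutation and then invoking the injectivity half of Theorem~\ref{thm:magyar} explicitly. Remark~\ref{p:std} even flags that the use of Magyar's parameterization is what makes the proof of Theorem~\ref{thm:istd} short. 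So as written your argument assumes what it is trying to prove.

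The paper's actual proof of Theorem~\ref{thm:magyar} is direct and does not touch $i$-standard forms: given a $B$-orbit, write it as $B\cdot(w(\fb),[y])$, observe that the stabilizer of $w(\fb)$ in $B$ is $B\cap wBw^{-1}$, which contains $H$ and has unipotent part generated by root vectors $E_{i,j}$ with $i<j$ and $w^{-1}(i)<w^{-1}(j)$, and use these to reduce $[y]$ to $[v_\Delta]$ for a decreasing sequence $\Delta$ for $w^{-1}$. The inverse map is then checked directly. Your route could be made to work only if you supplied an independent proof of Theorem~\ref{thm:istd} (in the spirit of Theorem~4.7 of \cite{CE21II}), but you do not; you simply invoke it.
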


\begin{proof}
We associate to a decorated permutation $(w, \Delta)$ the $B$-orbit ${{\mathcal O}}_{(w,\Delta)}.$  We can rewrite a $B$-orbit $B\cdot (\F, [v])$ as $B \cdot (w(\fb), [y])$, $[y]\in\mathbb{P}^{n-1}$.  Recall that $B \cap wBw^{-1}=\mbox{Stab}_{B} (w(\fb))$, $H\subset B \cap wBw^{-1}$, and its Lie algebra $\fb\cap w(\fb) = \bigoplus_{i<j,w^{-1}(i)<w^{-1}(j)}\C E_{i,j}.$   It is then an easy computation to prove that there is $g\in B\cap wBw^{-1}$  so that $g\cdot y=v_{\Delta}$, where $\Delta\subset\{1,\dots,n\}$ is a decreasing sequence for $w^{-1}$ as above, so that $B\cdot (\F, [v])= {{\mathcal O}}_{(w,\Delta)}.$   This gives a map $\Psi$ from $B$-orbits on $\B_{n} \times\mathbb{P}^{n-1}$ to $\widehat{\Sigma}_{n}.$   It is routine to check that the map $(w,\Delta) \mapsto {{\mathcal O}}_{(w,\Delta)}$ is inverse to $\Psi.$
\end{proof}



\begin{nota} \label{n:marked}  The orbit ${{\mathcal O}}_{(w,\Delta)}\in B\backslash(\B_{n}\times\mathcal{O}_{i})$ if and only if $j_{k}=i$.  In this case, we refer to the decorated permutation $(w,\Delta)$ as an $i$-decorated permutation.  We denote the set of $i$-decorated permutations by $\widehat{\Sigma}_{n}(i)$.  For $(w,\Delta)\in \widehat{\Sigma}_{n}(i)$, we denote the  $S_{i}$-orbit on $\B_{n}$ corresponding to the $B$-orbit $\mathcal{O}_{(w,\Delta)}$ given by the rule in (\ref{eq:Sicorres}) by $Q_{(w,\Delta)}$.  Explicitly, this means

\begin{equation}\label{eq:markedcorres}
(w,\Delta)\leftrightarrow \calO_{(w,\Delta)}=B\cdot (w(\fb), [v_{\Delta}])\leftrightarrow Q_{(w,\Delta)}= S_{i} b\cdot w(\fb) \in S_{i}\backslash\B_{n},
\end{equation}
where $b\in B$ is such that $b\cdot [v_{\Delta}]=[e_{i}]$.

\end{nota}


\begin{proof}[Proof of Theorem \ref{thm:istd}]
Consider $(w,\Delta)\in\widehat{\Sigma}_{n}(i)$ with $\Delta=\{j_{1}<\dots<j_{k-1}<j_{k}=i\}$ and the corresponding $B$-orbit $\mathcal{O}_{(w,\Delta)}.$  Let $Q_{(w,\Delta)}$ be the corresponding $S_{i}$-orbit on $\B_{n}$ via (\ref{eq:markedcorres}).   Let $b\in B$ be the element of $G$ whose action on the standard basis of $\C^{n}$ is given by $b:e_{i}\mapsto e_{i}-\sum_{\ell=1}^{k-1} e_{j_{\ell}}$ and $b:e_{\ell}\mapsto e_{\ell}$ for all other $\ell$.   Then $b\cdot[v_{\Delta}]=[e_{i}]$, so that $Q_{(w,\Delta)}=S_{i}b\cdot w(\fb)$.  We show that $Q_{(w,\Delta)}$ contains a flag in $i$-standard form.  For this, let $i_{\ell}=w^{-1}(j_{\ell})$ for $\ell=1,\dots, k$.  Since $\Delta$ is a decreasing sequence for $w^{-1}$, the Borel subalgebra $w(\fb)$ stabilizes the flag $\tilde{\F}:=w(\mathcal{E})$ given by
 \begin{equation}\label{eq:firstflag} 
 \begin{split}
 \tilde{\F}=&(e_{w(1)}\subset\dots\subset e_{w(i_{k}-1)}\subset\underbrace{e_{i}}_{i_{k}}\subset e_{w(i_{k}+1)}\subset \dots\subset \underbrace{e_{j_{k-1}}}_{i_{k-1}}\subset
\dots\subset \underbrace{e_{j_{2}}}_{i_{2}}\subset \dots\subset\underbrace{e_{j_{1}}}_{i_{1}}\subset \\ &e_{w(i_{1}+1)}\subset\dots\subset e_{w(n)}).
\end{split}
\end{equation}
We claim that the flag $b\cdot\tilde{\F}$ is $S_{i}$-conjugate to the flag in $i$-standard form:
\begin{equation}\label{eq:secondflag}
\begin{split}
\F=&(e_{w(1)}\subset\dots\subset e_{w(i_{k}-1)}\subset\underbrace{ \he_{j_{k-1}}}_{i_{k}}\subset e_{w(i_{k}+1)}\subset \dots \subset \underbrace{\he_{j_{k-2}}}_{i_{k-1}}\subset \dots \subset \underbrace{\he_{j_{1}}}_{i_{2}}\subset \dots\subset \underbrace{e_{i}}_{i_{1}}\subset\\& e_{w(i_{1}+1)}\subset \dots\subset e_{w(n)}).
\end{split}
\end{equation}
Indeed, $b\cdot\tilde{\F}$ is the flag: 
\begin{equation*}
\begin{split}
b\cdot\tilde{\F}&=(e_{w(1)}\subset\dots\subset e_{w(i_{k}-1)}\subset\underbrace{ e_{i}-\sum_{\ell=1}^{k-1} e_{j_{\ell}}}_{i_{k}}\subset e_{w(i_{k}+1)}\subset \dots \subset \underbrace{e_{j_{k-1}}}_{i_{k-1}}\subset\\
& \dots \subset \underbrace{e_{j_{2}}}_{i_{2}}\subset \dots\subset  \underbrace{e_{j_{1}}}_{i_{1}}\subset e_{w(i_{1}+1)}\subset \dots\subset e_{w(n)}).
\end{split}
\end{equation*}
Now consider the element $x_{k-1}\in G$ whose action on the standard basis is given by:
$x_{k-1}: e_{j_{k-1}}\mapsto e_{j_{k-1}}-\sum_{\ell=1}^{k-2} e_{j_{\ell}}$ and $x_{k-1}: e_{m}\mapsto e_{m}$ for all other $m$.  Then $x_{k-1}\in S_{i}$ and 
\begin{equation*}
\begin{split}
x_{k-1}b\cdot\tilde{\F}
&=(e_{w(1)}\subset\dots\subset e_{w(i_{k}-1)}\subset\underbrace{ e_{i}-e_{j_{k-1}}}_{i_{k}}\subset e_{w(i_{k}+1)}\subset \dots \subset \underbrace{e_{j_{k-1}}-\sum_{\ell=1}^{k-2} e_{j_{\ell}}}_{i_{k-1}}\subset\\& \dots \subset \underbrace{e_{j_{2}}}_{i_{2}}\subset \dots\subset \underbrace{e_{j_{1}}}_{i_{1}}\subset e_{w(i_{1}+1)}\subset \dots\subset e_{w(n)}).
\end{split}
\end{equation*}
Now let $x_{k-2}\in G$ be the element which maps $e_{j_{k-2}}\to e_{j_{k-2}}-\sum_{\ell=1}^{k-3} e_{j_{\ell}}$ and fixes all other standard basis vectors.
Then $x_{k-2}\in S_{i}$ and 
\begin{equation*}
\begin{split}
x_{k-2}x_{k-1}b\cdot\tilde{\F}
&=(e_{w(1)}\subset\dots\subset e_{w(i_{k}-1)}\subset\underbrace{ e_{i}-e_{j_{k-1}}}_{i_{k}}\subset e_{w(i_{k}+1)}\subset \dots \subset \underbrace{e_{j_{k-1}}-e_{j_{k-2}}}_{i_{k-1}}\subset\\
&\dots\subset\underbrace{e_{j_{k-2}}-\sum_{\ell=1}^{k-3} e_{j_{\ell}}}_{i_{k-2}}\subset  \dots \subset \underbrace{e_{j_{2}}}_{i_{2}}\subset \dots\subset \underbrace{e_{j_{1}}}_{i_{1}}\subset  e_{w(i_{1}+1)}\subset \dots\subset e_{w(n)}).
\end{split}
\end{equation*}
Continuing in this fashion, we deduce that $b\cdot{\tilde{\F}}$ is $S_{i}$-conjugate to the flag 
\begin{equation*}
\begin{split}
x_{2}\dots x_{k-2}x_{k-1}b\cdot\tilde{\F}&=(e_{w(1)}\subset\dots\subset e_{w(i_{k}-1)}\subset\underbrace{ e_{i}-e_{j_{k-1}}}_{i_{k}}\subset e_{w(i_{k}+1)}\subset \dots \subset \underbrace{e_{j_{k-1}}-e_{j_{k-2}}}_{i_{k-1}}\subset\\
&\dots\subset\underbrace{e_{j_{k-2}}-e_{j_{k-3}}}_{i_{k-2}}\subset  \dots \subset \underbrace{e_{j_{2}}-e_{j_{1}}}_{i_{2}}\subset \dots\subset \underbrace{e_{j_{1}}}_{i_{1}}
\subset e_{w(i_{1}+1)}\subset \dots\subset e_{w(n)}).
\end{split}
\end{equation*}
Since this flag is $H$-conjugate to the flag $\F$ in $i$-standard form in Equation (\ref{eq:secondflag}), the $S_{i}$-orbit $Q_{(w,\Delta)}$ contains the flag $\F$ in $i$-standard form.


To prove uniqueness, consider an $S_{i}$-orbit $Q$ which contains a flag $\F$ in $i$-standard form.  
Without loss of generality, we may assume that $\F$ is given by 
\begin{equation}\label{eq:helperflag}
\F=(v_{1}\subset\dots\subset v_{i_{k}-1}\subset\underbrace{ \he_{j_{k-1}}}_{i_{k}}\subset v_{i_{k}+1}\subset \dots \subset \underbrace{\he_{j_{k-2}}}_{i_{k-1}}\subset \dots \subset \underbrace{\he_{j_{1}}}_{i_{2}}\subset \dots\subset \underbrace{e_{i}}_{i_{1}}\subset v_{i_{1}+1}\subset \dots\subset v_{n}),
\end{equation}
where the vectors $v_{m}$ are standard basis vectors.  
 We first show that the flag in (\ref{eq:helperflag}) is $B$-conjugate to the flag 
\begin{equation}\label{eq:wflag}
\tilde{\F}=(v_1 \subset\dots\subset v_{i_{k-1}}\subset\underbrace{e_{i}}_{i_{k}}\subset v_{i_{k+1}}\subset \dots\subset \underbrace{e_{j_{k-1}}}_{i_{k-1}}\subset\dots\subset \underbrace{e_{j_{2}}}_{i_{2}}\subset \dots\subset\underbrace{e_{j_{1}}}_{i_{1}}\subset v_{i_{1}+1}\subset\dots\subset v_{n}).
\end{equation}
  Consider the element $g\in G$ whose action on the standard basis of $\C^{n}$ is given by 
\begin{equation*}
g: e_{i}\mapsto \he_{j_{k-1}},\; e_{j_{m}}\mapsto -e_{j_{m}}+e_{j_{m-1}} \mbox{ for } m=2,\dots, k-1,\mbox{ and } g: e_{\ell}\mapsto e_{\ell} \mbox{ for all other } \ell.
\end{equation*}
Then $g\in B$ and computation shows that
\begin{equation*}
\begin{split}
g\cdot \tilde{\F}&=(v_{1}\subset\dots\subset \underbrace{\he_{j_{k-1}}}_{i_{k}}\subset\dots \subset \underbrace{-e_{j_{k-1}}+e_{j_{k-2}}}_{i_{k-1}}\subset\dots \subset \underbrace{-e_{j_{2}}+e_{j_{1}}}_{i_{2}}\subset \dots\subset \underbrace{e_{j_{1}}}_{i_{1}}\subset\dots \subset v_{n})\\
&=(v_{1}\subset\dots\subset \underbrace{\he_{j_{k-1}}}_{i_{k}}\subset\dots\subset \underbrace{\he_{j_{k-2}}}_{i_{k-1}}\subset\dots\ldots\ldots\subset\underbrace{\he_{j_{1}}}_{i_{2}}\subset\dots\ldots\ldots\subset \underbrace{e_{i}}_{i_{1}}\subset\dots\subset v_{n}),
\end{split}
\end{equation*}
which is the flag $\F$ in Equation (\ref{eq:helperflag}).  Thus, $B\cdot Q=B\cdot w_{\F}(\fb)$, where $w_{\F}\in W$ is such that $w_{\F}(\mathcal{E})=\tilde{\F}$. 
Since $i_k<i_{k-1}< \dots < i_1$ and $w_{\F}^{-1}(j_s)=i_s$ for $s=1, \dots, k$ and $j_k=i$, then $\Delta_{\F} = \{ j_1 < j_2 < \dots < j_{k-1} < j_k=i\}$  
 is a decreasing sequence for $w_{\F}^{-1}$ and the pair $(w_{\F},\Delta_{\F})\in\widehat{\Sigma}_{n}(i)$.   By Equation (\ref{eq:Sicorres}), the orbit $Q$ corresponds to the $B$-orbit 
$B\cdot (g\cdot \tilde{\F}, [e_{i}])=B\cdot (\tilde{\F}, g^{-1}\cdot[e_{i}])$.
Note that $g\cdot (e_{i}+e_{j_{k-1}}+\dots+e_{j_{2}}-e_{j_{1}})=e_i$, so that $g^{-1}\cdot e_i \in H\cdot [v_{\Delta_{\F}}].$  Hence, the orbit $Q=Q_{(w_{\F},\Delta_{\F})}$ by (\ref{eq:markedcorres}). 
Now if ${\mathcal{G}}$ is another flag in $Q$ in $i$-standard form, then 
$Q=Q_{(w_{\G},\Delta_{\G})}$, so $Q_{(w_{\F},\Delta_{\F})}=Q_{(w_{\G},\Delta_{\G})}$, and
thus $w_{\F}=w_{\G}$ and $\Delta_{\F}=\Delta_{\G}$ by Theorem \ref{thm:magyar}.  But $\Delta_{\F}=\Delta_{\G}$ and Equation (\ref{eq:wflag}) together  imply that $\F$ and $\G$ have the same vectors in all positions,  so that $\F=\G.$

\end{proof}

\begin{rem}\label{r:combinatorial}
 Theorem \ref{thm:istd} and its proof give a bijection:
$$
\widehat{\Sigma}_{n}(i)\leftrightarrow\{\mbox{Flags in $i$-standard form}\}.
$$
It is given in the forward direction by mapping $(w,\Delta)$ with $\Delta=\{j_{1}<\dots< j_{k-1}<i\}$ as above to the $i$-standard form $\F_{(w,\Delta)}$ given by placing hats on the vectors $e_{j_{k-1}},\dots, e_{j_{1}}$ in the flag $\tau_{\Delta} w(\mathcal{E}),$ where $\tau_{\Delta}$ is the $k$-cycle $(i,\, j_{k-1},\, \dots,\, j_{1}).$   The inverse bijection is given by mapping $\F$ to $(w_{\F},\Delta_{\F}).$  Moreover, this bijection is compatible with the orbit correspondence in Equation (\ref{eq:markedcorres}), i.e., $Q_{(w,\Delta)}=S_{i}\cdot \F_{(w,\Delta)}$, and conversely if $Q=S_{i}\cdot \F$ with $\F$ a flag in $i$-standard form then $Q=Q_{(w_{\F}, \Delta_{\F})}$. 
\end{rem}

\begin{rem}\label{p:std}
Note that using Magyar's parametrization makes the proof of Theorem \ref{thm:istd}  far simpler than the proof of the analogous result for $B_{n-1}\backslash\B_{n}$ (cf. Theorem 4.7 of \cite{CE21II}), which used a more involved argument.
\end{rem}



\begin{nota} \label{note:iandtildeflags}
For a flag $\F$ in $i$-standard form, we denote by $\tilde{\F}$ the unique $H$-stable flag in the $B$-orbit $B\cdot \F$ and by $\F^{i}$ the unique $H$-stable flag in the $B^{i}$-orbit $B^{i}\cdot \F$.  
\end{nota}
\noindent The following result generalizes Proposition 2.7 in \cite{Shpairs}.
\begin{prop}\label{p:iandtildeflags}
Let $\F\subset\B_{n}$ be a flag in $i$-standard form with 
$$
\F=(v_{1}\subset v_{2}\subset \dots \subset v_{k}\subset \dots v_{n}).
$$
If $\F$ contains no hat vectors then $\F=\tilde{\F}=\F^{i}$.  On the other hand, suppose that $\F$ is in the form of Equation (\ref{eq:helperflag}).
Then $\tilde{\F}$ is given by Equation (\ref{eq:wflag})
and the flag $\F^{i}$ is given by:
\begin{equation}\label{eq:i-flag}
\F^{i}=(v_{1}\subset\dots\subset v_{i_{k}-1}\subset\underbrace{ e_{j_{k-1}}}_{i_{k}}\subset v_{i_{k}+1}\subset \dots \subset \underbrace{e_{j_{k-2}}}_{i_{k-1}}\subset \dots \subset \underbrace{e_{j_{1}}}_{i_{2}}\subset \dots\subset \underbrace{e_{i}}_{i_{1}}\subset v_{i_{1}+1}\subset \dots\subset v_{n}).
\end{equation}
\end{prop}
\begin{proof}
If $\F$ contains no hat vectors, then $\F$ is $H$-stable, whence $\F=\tilde{\F}=\F^{i}$.  
On the other hand, if $\F$ is given by (\ref{eq:helperflag}), then we saw in the proof of Theorem \ref{thm:istd} that $\F$ is $B$-conjugate to the flag $\tilde{\F}$ of Equation (\ref{eq:wflag}).  It follows easily from the definition of the Borel subgroup $B^{i}$ as the stabilizer of the flag in (\ref{eq:Fiflag}) that $\F$ is $B^{i}$-conjugate to the flag in Equation (\ref{eq:i-flag}).
\end{proof}
\begin{proof}[Proof of Theorem \ref{thm:ShpairsforSi}]
By Theorem \ref{thm:istd}, each $S_i$-orbit $Q$ in $\B_{n}$ is of the form $S_{i}\cdot\F$ with $\F$ a flag in 
$i$-standard form.  Now 
$Q=S_{i}\cdot \F\subset B\cdot\tilde{\F}\cap B^{i}\cdot \F^{i}$, where 
$\tilde{\F}$ and $\F^{i}$ are given in Equations (\ref{eq:wflag}) and (\ref{eq:i-flag}). So to prove the 
Theorem, it suffices to show that the orbits $B\cdot \tilde{\F}$ and $B^{i}\cdot \F^{i}$ contain no common orbit besides $Q$.   By Remark \ref{r:combinatorial}, $Q=Q_{(w,\Delta)}$, where $w\in W$ is determined by $\tilde{\F}=w(\mathcal{E})$ and $\Delta\subset \{1,\dots, i\}$ consists of the indices of the hat vectors in $\F$ along with $i$.  Then it follows from Proposition \ref{p:iandtildeflags} that $\F^{i}=\tau_{\Delta}(\tilde{\F})$ where $\tau_{\Delta}$ is the $k$-cycle $(i,\, j_{k-1},\dots, j_{1}).$  If $Q^{\prime}=Q_{(w^{\prime}, \Delta^{\prime})}=S_{i}\cdot\mathcal{G}$ is another $S_{i}$-orbit, then similarly $\tilde{\mathcal{G}}=w^{\prime}(\mathcal{E})$ and
$\mathcal{G}^{i}=\tau_{\Delta^{\prime}}(\tilde{\mathcal{G}}).$   Hence, if $Q^{\prime}\subset  B\cdot\tilde{\F}\cap B^{i}\cdot \F^{i}$, then $\tilde{\F}=\tilde{\mathcal{G}}$ and $\F^{i}=\mathcal{G}^{i}$ so that $w=w^{\prime}$ and $\tau_{\Delta}=\tau_{\Delta^{\prime}}$.  Thus, $(w,\Delta)=(w^{\prime}, \Delta^{\prime})$ so that $Q=Q^{\prime}$.  


\end{proof}


\begin{dfn}\label{d:geoShpair}
Let $Q_B$ be a $B$-orbit and let $Q_{B^i}$ be a $B^{i}$-orbit in $\B_n$.  We call the pair $(Q_B,Q_{B^{i}})$ a {\it geometric $i$-Shareshian pair} if $Q_B \cap Q_{B^{i}}$ is nonempty, in which case it is a single $S_{i}$-orbit by Equation (\ref{eq:intersect}).
\end{dfn}

\begin{rem}\label{r:geoshar}
The map $\Sh_{i}$ from $S_{i}\backslash \B_n$ to geometric $i$-Shareshian pairs given by $Q\mapsto (B\cdot Q, B^{i}\cdot Q)$ is bijective with the inverse given by taking the intersection of the given $B$ and $B^{i}$-orbits.  This is a restatement of Corollary \ref{c:intersect}.
\end{rem}

We call the map $\Sh_{i}$ the {\it $i$-Shareshian map}, and we can describe it combinatorially as follows.   By the Bruhat decomposition, we can write any geometric $i$-Shareshian pair as $(B\cdot w(\mathcal{E}),B^{i}\cdot u^{i}(\mathcal{E}^{i}))$ for unique Weyl group elements $w,\, u^{i}\in W$.
\begin{dfn}\label{d:SHmap}
The $i$-Shareshian map is given by:
\begin{equation}\label{eq:Shmap}
\Sh_{i}: S_{i}\backslash\B_{n}\to W\times W; \;\Sh_{i}(S_{i}\cdot\F)=(w,u^{i}) \mbox{ where } \tilde{\F}=w(\mathcal{E})
\mbox{ and }\F^{i}=u^{i}(\mathcal{E}^{i})
\end{equation}
are the flags given in Notation \ref{note:iandtildeflags}, and $\F$ is a flag in $i$-standard form.
\end{dfn}

\begin{nota}\label{d:Shpair}
 We refer to a pair of Weyl group 
elements $(w,y)\in W\times W$ such that $(w,y)=\Sh_{i}(Q)$ for some $Q\in S_{i}\backslash\B_{n}$,  as an \emph{$i$-Shareshian pair}.  We denote the subset of $W\times W$ consisting of all $i$-Shareshian pairs as $\Sp_{i}\subset W\times W$.  
\end{nota}
\noindent The following statement is immediate from Remark \ref{r:geoshar}.
\begin{cor}\label{c:Shpair}
The $i$-Shareshian map $\Sh_{i}:S_{i}\backslash \B_n \to \Sp_{i}$ is bijective.
\end{cor}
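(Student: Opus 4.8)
The plan is to deduce Corollary~\ref{c:Shpair} from Theorem~\ref{thm:ShpairsforSi} by reading the combinatorial map $\Sh_i$ of Definition~\ref{d:SHmap} as a relabelling of the geometric map $Q\mapsto(B\cdot Q,B^i\cdot Q)$ of Remark~\ref{r:geoshar}. Surjectivity of $\Sh_i\colon S_i\backslash\B_n\to\Sp_i$ is a triviality, since by Notation~\ref{d:Shpair} the target $\Sp_i$ is by definition the image of $\Sh_i$; so the entire content is injectivity.

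For injectivity I would first record the elementary parametrization statement: the assignment $(w,y)\mapsto(B\cdot w(\mathcal{E}),\,B^i\cdot y(\mathcal{E}^i))$ is a bijection from $W\times W$ onto the set of pairs consisting of one $B$-orbit and one $B^i$-orbit on $\B_n$. This follows from the Bruhat decomposition applied separately to $B$ and to $B^i$: the flag $\mathcal{E}^i$ of Equation~(\ref{eq:Fiflag}) is a flag of coordinate subspaces, hence $H$-stable, so $H\subset B^i$, and each $B$-orbit (resp.\ $B^i$-orbit) on $\B_n$ contains a unique $H$-stable flag, namely $w(\mathcal{E})$ for a unique $w\in W$ (resp.\ $y(\mathcal{E}^i)$ for a unique $y\in W$).

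Next I would observe that composing $\Sh_i$ with this bijection recovers exactly the map $Q\mapsto(B\cdot Q,B^i\cdot Q)$: if $Q=S_i\cdot\F$ with $\F$ in $i$-standard form and $\Sh_i(Q)=(w,u^i)$, then by Notation~\ref{note:iandtildeflags} and Definition~\ref{d:SHmap} we have $w(\mathcal{E})=\tilde{\F}\in B\cdot\F=B\cdot Q$ and $u^i(\mathcal{E}^i)=\F^i\in B^i\cdot\F=B^i\cdot Q$ (using $S_i\subset B$ and $S_i\subset B^i$, since $S_i=B\cap B^i$). The geometric map $Q\mapsto(B\cdot Q,B^i\cdot Q)$ is injective because Theorem~\ref{thm:ShpairsforSi} lets one recover $Q$ as the intersection $B\cdot Q\cap B^i\cdot Q$; since the parametrizing map above is a bijection, it follows that $\Sh_i$ is injective. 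Equivalently and more directly: if $\Sh_i(Q)=\Sh_i(Q')=(w,u^i)$ then $\tilde{\F}=w(\mathcal{E})=\tilde{\G}$ and $\F^i=u^i(\mathcal{E}^i)=\G^i$, whence $B\cdot Q=B\cdot Q'$ and $B^i\cdot Q=B^i\cdot Q'$, and Theorem~\ref{thm:ShpairsforSi} gives $Q=B\cdot Q\cap B^i\cdot Q=Q'$.

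Since all the real work has already been done in Theorems~\ref{thm:istd} and~\ref{thm:ShpairsforSi}, I do not expect any genuine obstacle. The only step needing a moment's attention is checking that the Bruhat parametrization is available on the $B^i$ side — i.e.\ that $B^i$ contains the diagonal torus $H$, so that its orbits on $\B_n$ are indexed by $W$ via $H$-stable flags — and this is immediate from the explicit form of $\mathcal{E}^i$ in Equation~(\ref{eq:Fiflag}).
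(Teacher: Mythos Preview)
Your proof is correct and follows essentially the same approach as the paper: the paper simply records that Corollary~\ref{c:Shpair} is immediate from Remark~\ref{r:geoshar}, and your argument spells out exactly this, namely that the combinatorial map $\Sh_i$ of Definition~\ref{d:SHmap} is the geometric bijection $Q\mapsto(B\cdot Q,B^i\cdot Q)$ of Remark~\ref{r:geoshar} composed with the Bruhat parametrizations on each side. Your extra care in verifying $H\subset B^i$ so that the Bruhat decomposition applies to $B^i$-orbits is a detail the paper leaves implicit.
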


\begin{rem}\label{r:starandtildeflags}
Let $Q=S_{i}\cdot \F\in\Borbitspace$ with $\F$ a flag in $i$-standard form and with $\Sh(Q)=(w, u^{i})$.  Then it follows from definitions that $B\cdot Q=B\cdot w(\mathcal{E})$ and $B^{i}\cdot Q=B^{i}\cdot u^{i}(\mathcal{E}^{i})$. Thus, $(w,u^{i})$ is an $i$-Shareshian pair if and only if $B\cdot w(\mathcal{E})\cap B^{i}\cdot u^{i}(\mathcal{E}^{i})\neq\emptyset$.


\end{rem}

We now describe necessary and sufficient conditions for a pair of Weyl group 
elements $(w,y)\in W\times W$ to be an $i$-Shareshian pair.  


\begin{prop}\label{p:Shpairs}
Let $\Delta=\{j_{1}<j_{2}<\dots<j_{k-1}<i\}$ be a subsequence of $\{1,\dots,i\}$ 
containing $i$ and let $\tau_{\Delta}\in \Sigma_{n}$ be the $k$-cycle
$\tau_{\Delta}:=(i,\, j_{k-1}, \dots, j_{2}, j_{1})$.  Define $\sigma_{i}\in\Sigma_{n}$ to be 
the $i$-cycle $\sigma_{i}=(i,i-1,\dots, 2,1)$.  Then a pair of Weyl group elements 
$(w, y)\in W\times W$ is an $i$-Shareshian pair if and only if the following two conditions hold: 
\begin{enumerate}
\item $y=\tau_{\Delta}w\sigma_{i}^{-1}$ for some subsequence $\Delta$ of $\{1,\dots, i\}$ containing $i$ as above.
\item $\Delta$ is a decreasing sequence for $w^{-1}$, i.e., $w^{-1}(i)<w^{-1}(j_{k-1})<\dots <w^{-1}(j_{1})$.
\end{enumerate}
\end{prop}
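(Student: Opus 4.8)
The plan is to reduce the statement to a single explicit computation of the $i$-Shareshian map $\Sh_{i}$ on the canonical representatives $Q_{(w,\Delta)}$, and then read off both implications.

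First I would record the elementary identity $\mathcal{E}^{i}=\sigma_{i}(\mathcal{E})$: the $i$-cycle $\sigma_{i}=(i,i-1,\dots,2,1)$ sends $1\mapsto i,\ 2\mapsto 1,\ \dots,\ i\mapsto i-1$ and fixes every index $>i$, so the flag $\sigma_{i}(\mathcal{E})=(e_{\sigma_{i}(1)}\subset\dots\subset e_{\sigma_{i}(n)})$ is exactly the flag $\mathcal{E}^{i}$ of (\ref{eq:Fiflag}). Consequently a flag $\mathcal{G}$ satisfies $\mathcal{G}=u(\mathcal{E}^{i})$ if and only if $\mathcal{G}=u\sigma_{i}(\mathcal{E})$; in particular, for $Q=S_{i}\cdot\F$ with $\F$ in $i$-standard form, the second coordinate $u^{i}$ of $\Sh_{i}(Q)=(w,u^{i})$ is characterized by $\F^{i}=u^{i}\sigma_{i}(\mathcal{E})$, while the first is characterized by $\tilde{\F}=w(\mathcal{E})$.

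The central step is to compute $\Sh_{i}(Q_{(w,\Delta)})$ for an arbitrary $(w,\Delta)\in\widehat{\mathcal{S}}_{n}(i)$, $\Delta=\{j_{1}<\dots<j_{k-1}<i\}$. By Remark \ref{r:combinatorial}, $Q_{(w,\Delta)}=S_{i}\cdot\F$ where $\F$ is the flag in $i$-standard form obtained from $\tau_{\Delta}w(\mathcal{E})$ by placing hats on the appropriate entries; concretely $\F$ is $H$-stable when $k=1$ (i.e.\ $\Delta=\{i\}$) and otherwise has the shape (\ref{eq:genericstd}). In either case Proposition \ref{p:iandtildeflags} gives $\tilde{\F}=w(\mathcal{E})$ (formula (\ref{eq:wflag})), and comparing (\ref{eq:wflag}) with (\ref{eq:i-flag}) shows $\F^{i}=\tau_{\Delta}(\tilde{\F})$ with $\tau_{\Delta}$ the $k$-cycle $(i,j_{k-1},\dots,j_{1})$ — exactly the observation already used inside the proof of Theorem \ref{thm:ShpairsforSi}; for $k=1$ this degenerates to $\tau_{\Delta}$ being the identity and $\F^{i}=\tilde{\F}$. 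Since the $\ell$-th vector of $\tau_{\Delta}(\tilde{\F})$ is $e_{\tau_{\Delta}(w(\ell))}$, this says $\F^{i}=\tau_{\Delta}w(\mathcal{E})$. Combining with the characterization from the previous paragraph and the fact that a $B^{i}$-Schubert cell contains a unique $H$-fixed flag, we get $u^{i}\sigma_{i}=\tau_{\Delta}w$, i.e.\ $\Sh_{i}(Q_{(w,\Delta)})=(w,\ \tau_{\Delta}w\sigma_{i}^{-1})$; and of course $\Delta$ is a decreasing sequence for $w^{-1}$ because $(w,\Delta)\in\widehat{\mathcal{S}}_{n}(i)$.

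Both directions now follow at once. For necessity, let $(w,y)=\Sh_{i}(Q)$ be an $i$-Shareshian pair; by Theorem \ref{thm:istd} and Remark \ref{r:combinatorial}, $Q=Q_{(w',\Delta)}$ for a unique $(w',\Delta)\in\widehat{\mathcal{S}}_{n}(i)$, so the identity above gives $(w,y)=(w',\ \tau_{\Delta}w'\sigma_{i}^{-1})$; thus $w=w'$, $y=\tau_{\Delta}w\sigma_{i}^{-1}$ (condition (1)) and $\Delta$ is decreasing for $w^{-1}$ (condition (2)). For sufficiency, suppose $(w,y)$ satisfies (1) and (2); then (2) says exactly that the subsequence $\Delta$ furnished by (1) makes $(w,\Delta)\in\widehat{\mathcal{S}}_{n}(i)$, so $Q:=Q_{(w,\Delta)}$ is a genuine $S_{i}$-orbit and the identity above gives $\Sh_{i}(Q)=(w,\ \tau_{\Delta}w\sigma_{i}^{-1})=(w,y)$, i.e.\ $(w,y)\in\Sp_{i}$. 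The one point demanding care — and the step I would check most carefully — is the bookkeeping of conventions: that $\tau_{\Delta}$ and $\sigma_{i}$ multiply $w$ on the left (so their action on basis vectors composes as $\tau_{\Delta}\circ w$), that $\mathcal{E}^{i}=\sigma_{i}(\mathcal{E})$ rather than $\sigma_{i}^{-1}(\mathcal{E})$, and that $\tilde{\F}$, $\F^{i}$ really are the unique $H$-fixed flags in their Schubert cells; any mistaken variance would replace $\sigma_{i}$ or $\tau_{\Delta}$ by its inverse.
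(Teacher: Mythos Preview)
Your proof is correct and follows essentially the same route as the paper's: both compute $\Sh_{i}(Q_{(w,\Delta)})=(w,\tau_{\Delta}w\sigma_{i}^{-1})$ by combining Proposition~\ref{p:iandtildeflags} (giving $\tilde{\F}=w(\mathcal{E})$ and $\F^{i}=\tau_{\Delta}(\tilde{\F})$) with the identity $\mathcal{E}^{i}=\sigma_{i}(\mathcal{E})$, and then read off both implications from this formula together with Remark~\ref{r:combinatorial}. Your additional attention to the variance conventions is well placed but does not represent a different argument.
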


\begin{proof}
Suppose $Q\in S_{i}\backslash\B_{n}$ with $Q=Q_{(w,\Delta)}=S_{i}\cdot \F$ 
with $\Delta=\{j_{1}<\dots< j_{k-1}<i\}$ and $\F$ the unique flag in $i$-standard form in $Q$.  Then $\tilde{\F}=w(\mathcal{E})$ and $\F^{i}=\tau_{\Delta}(\tilde{\F})$ by Remark \ref{r:combinatorial} and
Proposition \ref{p:iandtildeflags} (if $\F$ has no hat vectors, then $\Delta=\{i\}$ and $\tau_{\Delta}=\{id\}$).  Recall the flag $\mathcal{E}^{i}$ of Equation (\ref{eq:Fiflag}) and note that $\sigma_{i}^{-1}(\mathcal{E}^{i})=\mathcal{E}$.  It then follows that 
$$
\F^{i}=\tau_{\Delta}(\tilde{\F})=\tau_{\Delta}w (\mathcal{E})=\tau_{\Delta}w\sigma_{i}^{-1}(\mathcal{E}^{i}),
$$
whence $\Sh_{i}(Q)=(w,\tau_{\Delta}w\sigma_{i}^{-1})$.  

Conversely, suppose we are given $(w,y)\in W\times W$ satisfying (1) and (2).  Then $(w,\Delta)\in\widehat{\Sigma}_{n}(i)$.  Let $\F$ be the unique flag in $i$-standard form in the orbit $Q_{(w,\Delta)}$.  Then by Remark \ref{r:combinatorial} and Proposition \ref{p:iandtildeflags}, $\tilde{\F}=w(\mathcal{E})$ and $\mathcal{F}^{i}=\tau_{\Delta}(\tilde{\F})$, so that $\Sh_{i}(Q)=(w,y)$.


\end{proof}

\begin{rem}\label{r:whatisdelta}
Let $\Sh_{i}(Q)=(w, u^{i})$ be the $i$-Shareshian pair for an orbit $Q=S_{i}\cdot \F$ with $\F$ in $i$-standard form.  It follows from Proposition \ref{p:Shpairs} and its proof that  
$\tau_{\Delta}:=u^{i}\sigma_{i} w^{-1}$ is a cycle $(i, j_{k-1},\dots, j_{1})$, and the hat vectors in $\F$ are $\he_{j_{k-1}},\dots, \he_{j_{1}}$.  Further, the pair $(w,\Delta)$ with 
$\Delta:=\{j_{1}<j_{2}\dots<j_{k-1}<i\}$ is an $i$-decorated permutation, and $Q=Q_{(w,\Delta)}$ with $Q_{(w,\Delta)}$ as in Notation \ref{n:marked}.  Conversely, if 
$Q=Q_{(w,\Delta)}$ with $(w,\Delta)\in \widehat{\Sigma}_{n}(i)$ then $\Sh_{i}(Q)=(w,\tau_{\Delta} w\sigma_{i}^{-1})$.  
\end{rem}

It follows from Remarks \ref{r:combinatorial} and \ref{r:whatisdelta} that we have now established identifications between three different parameterizations of $S_{i}$-orbits on $\B_{n}$:
\begin{equation}\label{eq:differentparams}
\{\mbox{Flags in $i$-standard form}\}\leftrightarrow \widehat{\Sigma}_{n}(i)\leftrightarrow \{ i\mbox{-Shareshian pairs}\}.
\end{equation}
At times, it will be convenient for us to use a slightly different version of an $i$-Shareshian pair.
\begin{dfn}\label{d:standardizedpair}
Recall the $i$-cycle $\sigma_{i}=(i,\, i-1,\dots, 1)\in\Sigma_{n}$ defined in the statement of Proposition \ref{p:Shpairs}. 
Let $Q\in S_{i}\backslash\B_{n}$.  If $\Sh_{i}(Q)=(w,u^{i})$, we define the \emph{$i$-standardized Shareshian pair} for $Q$ to be $\widetilde{\Sh}_{i}(Q)=(w,u)$ where 
$u:=\sigma_{i}^{-1}u^{i}\sigma_{i}$.

\end{dfn}

\section{Combinatorial Models for $S_{i}$-orbits: Lists and PILs}\label{s:models}

In this section, we give an alternative combinatorial 
model for $S_{i}\backslash\B_{n}$ to decorated permutations using the notion of partitions into lists.   We compute exponential generating functions for 
the sequence $\{|S_{n-i}\backslash\B_{n}|\}_{n\geq i}$ and use the result to compute the exponential generating function for the sequence $\{|G_{\Delta}\backslash (\B_{n}\times\B_{n}\times\mathbb{P}^{n-1})|\}_{n\geq 1}$. Finally, we give a third combinatorial model for decorated permutations using partial permutations, which is suggested from the study of the latter generating function. 

  We begin by recalling the parameterization of $\Borbitspace$ in Section 3 of \cite{CE21II}. 

\begin{dfn}\label{dfn:lists}
\begin{enumerate}
\item We define a \emph{list} on a subset $\{a_{1},\dots, a_{n}\}$ of the non-negative integers to be a $k$-tuple $\sigma=(a_{i_{1}},\dots, a_{i_{k}}) \in \mathbb{Z}_{\geq 0}^k$, where $i_{s}\neq i_{t}$ for all $s\neq t$.  
For the list $\sigma$, we say
$\supp(\sigma)=\{ a_{i_1}, \dots, a_{i_k} \}$, the length $\ell(\sigma)=k$,
and $\sigma(j)=a_{i_j}$ for $1 \le j \le k$.  We denote the set of lists of $\{1,\dots, n\}$ of length $k$ by $Li(n,k)$. 
\item  Let $A\subset \mathbb{Z}_{\geq 0}$ be any finite subset.  A $PIL$ \emph {(partition into lists)} is a set of lists 
$\Sigma=\{\sigma_{1},\, \dots, \, \sigma_{k}\}$ on $A$ such that $\{\supp(\sigma_{1}),\dots, \supp(\sigma_{k})\}$ is a partition of $A$. 
\item For a finite subset $A\subset\mathbb{Z}_{\geq 0}$, we denote the set consisting of all PILs of $A$ by $PIL(A)$.  
In the case where $A=\{1,\dots, n\}$, we denote $PIL(A)$ by $PIL(n)$. 
\end{enumerate}
\end{dfn}

\begin{rem}\label{r:stdflagpil}
 Proposition 4.5 from  \cite{CE21II} asserts that there is a bijection from flags in standard form to $PIL(n)$ given as follows.  Let $\F$ be a flag in standard form with $\F = (v_1 \subset \cdots \subset v_n).$   Let $1 \le k_1 < \dots < k_r < n$ be the subsequence of $\{ 1, \dots, n \}$ such that $\{ v_{k_1}, \dots, v_{k_r} \}$ are the hat vectors in $\F$, and let $v_{k_{\ell}}={\hat{e}}_{s_{k_{\ell}}}$ for $\ell = 1, \dots, r$ and let $v_t=e_{s_t}$ for $t\not\in \{ 1, \dots, r \}.$
Then the PIL associated to $\F$ is
\[
\{(s_1,\dots, s_{k_1 - 1},s_{k_1}),(s_{k_1+1},\dots, s_{k_2}),\dots,(s_{k_{r-1}+1}, \dots, s_{k_r}),(s_{k_{r}+1}, \dots, s_n)\}.
\]
We note that the formula for this map was stated incorrectly in loc. cit.
\end{rem}


\noindent To parameterize $S_{i}\backslash\B_{n}$, we introduce the set:
\begin{equation}\label{eq:PILwithlist}
PIL(n,n-i)\subset Li(n,n-i)\times PIL(n)\mbox{ defined by } PIL(n,n-i):=\{(\sigma, \Sigma): \;\sigma\in\Sigma\}. 
\end{equation}
We note that the symmetric group acts on $PIL(n)$ through its action on indices in lists.
\begin{exam}\label{ex:PIL31}
The set $PIL(3,1)$ consists of $9$ elements: 

$((1), \{(1),(2),(3)\}),\,$ $((2), \{(1),(2),(3)\}),\, $ $((3), \{(1),(2),(3)\}),\,$ $((1),\{(1), (2,3)\}),$  

$((1),\{(1), (3,2)\}),\,((2),\{(2), (1,3)\}),\, ((2),\{(2), (3,1)\}),\, ((3), \{(3), (12)\}), ((3), \{ (3), (2,1)\}).$ 
\end{exam}

\begin{prop}\label{p:countSi}
The number of $S_{i}$-orbits on $\B_{n}$ is 
\begin{equation}\label{eq:countSi}
|S_{i}\backslash\B_{n}|=|PIL(n,n-i)|. 
\end{equation}
\end{prop}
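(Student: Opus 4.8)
The plan is to establish a bijection between $S_i$-orbits on $\B_n$ and the set $PIL(n,n-i)$ by composing the known bijections we have already set up. First I would invoke Theorem \ref{thm:istd} and Remark \ref{r:combinatorial}, which give a bijection between $S_i\backslash\B_n$ and the set of flags in $i$-standard form. So it suffices to construct a bijection between flags in $i$-standard form and $PIL(n,n-i)$. The guiding idea is that a flag in $i$-standard form is almost a flag in standard form in the sense of Definition 4.1 of \cite{CE21II}, except that the distinguished vector $e_i$ may appear before the end, and the hat vectors use the index $i$ rather than the top index $n$; in effect, one should be able to ``relabel'' so that $i$ plays the role that $n$ plays in the $B_{n-1}$-orbit picture, while remembering where the block containing $i$ sits.

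The key steps, in order, would be: (1) Given a flag $\F = (v_1 \subset \cdots \subset v_n)$ in $i$-standard form, apply the relabeling cycle $\sigma_i = (i, i-1, \dots, 1)$ (or rather its action on indices) to turn $\F$ into a flag $\sigma_i \cdot \F$ in which the special vector is now $e_n$ and the hat vectors have the form $\hat e_j = e_j + e_n$; one checks using the three conditions in Definition \ref{d:istd} that $\sigma_i \cdot \F$ is a flag in standard form in the sense of \cite{CE21II}, \emph{except} that the vector $e_n$ (formerly $e_i$) need not be in the last position — it sits in position $i_1$ in the notation of (\ref{eq:genericstd}). (2) Apply the bijection from Remark \ref{r:stdflagpil} (Proposition 4.5 of \cite{CE21II}, with the corrected formula) to the associated data to produce a PIL $\Sigma$ of $\{1,\dots,n\}$; the block of $\Sigma$ containing the special index records exactly the positions up through where $e_i$ sits, and I would extract from it the distinguished list $\sigma \in \Sigma$ of length $n - i$, namely the portion of $\F$ lying strictly above $e_i$ (positions $i_1 + 1, \dots, n$), read off in order — note Remark \ref{r.glstandard} ensures these are $n-i$ distinct indices, and condition (b) of Definition \ref{d:istd} guarantees none of them is a hat vector, so this really is an ordinary list. (3) Verify that the resulting pair $(\sigma, \Sigma)$ lies in $PIL(n,n-i)$, i.e., $\sigma \in \Sigma$; this holds because $\sigma$ is by construction the tail of the block of $\Sigma$ containing the index $i$. (4) Construct the inverse: from $(\sigma, \Sigma)$ one recovers the flag in $i$-standard form by using $\Sigma$ and the position of $\sigma$ within its block to determine where to place $e_i$ and which vectors carry hats, then check this is well-defined and inverts step (1)--(2). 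Finally, conclude $|S_i\backslash\B_n| = |PIL(n,n-i)|$.

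An alternative and perhaps cleaner route for steps (2)--(4) is to go through the decorated permutation parametrization: Remark \ref{r:combinatorial} gives $\widehat{\mathcal S}_n(i) \leftrightarrow \{\text{flags in } i\text{-standard form}\}$, so it suffices to biject $\widehat{\mathcal S}_n(i)$ with $PIL(n,n-i)$. Given $(w,\Delta) \in \widehat{\mathcal S}_n(i)$ with $\Delta = \{j_1 < \cdots < j_{k-1} < i\}$, the flag $\F_{(w,\Delta)}$ is obtained from $\tau_\Delta w(\mathcal E)$ by placing hats, and from its standard-form structure one reads off the PIL via the recipe of Remark \ref{r:stdflagpil} together with the list $\sigma$ = the vectors above position $i_1$. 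One then shows this map is a bijection by exhibiting the inverse explicitly in terms of the combinatorial data. I would present whichever of these is shorter in the write-up, but the substance is the same.

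The main obstacle I anticipate is purely bookkeeping: carefully matching up the three conditions of $i$-standard form (Definition \ref{d:istd}) with the defining properties of a flag in standard form from \cite{CE21II} after the relabeling by $\sigma_i$, and correctly identifying which positions of the PIL's distinguished block correspond to the list $\sigma$ of length exactly $n-i$ versus the hat-vector structure. In particular one must be careful that the ``special'' block of the PIL — the one containing $e_i$'s index — splits naturally as (hat vectors with their indices) followed by $e_i$ followed by ($n-i$ further standard vectors), and that $\sigma$ captures precisely the last part; checking that $\ell(\sigma) = n - i$ is forced (rather than an extra condition) uses condition (b) of Definition \ref{d:istd} and Remark \ref{r.glstandard}. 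Once the dictionary between the two normal forms is pinned down, the bijection and its inverse are routine, and the count follows immediately.
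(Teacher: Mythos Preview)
Your proposal contains a genuine error in identifying the distinguished list $\sigma$ of length $n-i$.  You propose that $\sigma$ consist of the indices of the vectors appearing \emph{after} $e_i$ in the flag, i.e., those at positions $i_1+1,\dots,n$ in the notation of Equation~(\ref{eq:genericstd}).  But this list has length $n-i_1$, where $i_1$ is the \emph{position} of $e_i$ in the flag, and $i_1$ has no reason to equal $i$.  For instance, take $n=3$, $i=2$, and $\F=(e_2\subset e_1\subset e_3)$: here $i_1=1$, and the tail after $e_2$ is $(1,3)$, of length $2\neq n-i=1$.  Conversely, for $\F=(e_1\subset e_3\subset e_2)$ the tail is empty.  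Condition~(b) of Definition~\ref{d:istd} only says the vectors after $e_i$ are unhatted; it does not force their number to be $n-i$, because unhatted standard basis vectors $e_j$ with $j\neq i$ (including $j>i$) are perfectly allowed to appear \emph{before} $e_i$.  The relabelling step (1) is also off: $\sigma_i=(i,i-1,\dots,1)$ is a cycle in $\mathcal{S}_i$ and does not send $e_i$ to $e_n$, so $\sigma_i\cdot\F$ never has $e_n$ as its special vector.

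The paper's proof uses a genuinely different decomposition.  Rather than splitting the flag at the position of $e_i$, it observes that the vectors $e_{i+1},\dots,e_n$ always occur in $\F$ (they cannot carry hats), say at positions $j_{i+1},\dots,j_n$; this gives an ordered $(n-i)$-tuple of positions.  The complementary $i$ positions, read in order, contain vectors whose indices lie in $\{1,\dots,i\}$ and which satisfy the axioms of a flag in standard form in $\B_i$, hence give an element of $PIL(i)$ via Remark~\ref{r:stdflagpil}.  Transporting both pieces back by the unique minimal coset representative $w\in{}^{L}W$ (for $L$ of type $\{\alpha_1,\dots,\alpha_{i-1}\}$) that records the positions $j_{i+1},\dots,j_n$ assembles them into an element of $PIL(n,n-i)$.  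The point is that the list of length $n-i$ comes from the \emph{positions of the large-index vectors}, not from what lies above $e_i$.
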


\begin{rem}
When $i=n$, Equation (\ref{eq:countSi}) reduces to the statement 
that $|\Borbitspace|=|PIL(n)|$, which is Theorem 3.3 of \cite{CE21II}.

\end{rem}

\begin{proof}
By Theorem \ref{thm:istd}, it suffices to construct a bijection
 $\Phi$ from the set $\{\F\in\B_{n}:\, \F \mbox{ is a flag in $i$-standard form}\}$ to the set $PIL(n,n-i)$.  
Let $\F=(v_{1}\subset\dots\subset v_{n})$ be a flag in $i$-standard form.  It follows from Definition \ref{d:istd} and Remark \ref{r.glstandard} that the vectors $e_{i+1},\dots, e_{n}$ occur in $\F$.  Suppose that $v_{j_{i+1}}=e_{i+1},\dots, v_{j_{n}}=e_{n}$.  Let $L$ be the standard Levi subgroup of $G$ corresponding to the  subset $\{\alpha_{1},\dots,\alpha_{i-1}\}$ of standard simple roots, and let $W_{L}$ be its Weyl group.   Let ${}^LW := \{ w\in W : w^{-1}(\Delta_L) \subset \Phi^+ \}$ and recall that each coset in $W_{L}\backslash W$ has a unique minimal length representative in ${}^LW$ (see 5.13 of \cite{Kostant-BW}).
It follows that there is a unique $w\in {}^LW$ such that $w(j_{i+1}) = i+1, w(j_{i+2}) = i+2, \dots, \, w(j_{n})=n.$  Further, if $j_1 < \dots < j_i$ are the remaining indices, $w(j_{\ell})=\ell$ for $\ell = 1, \dots, i.$ 
If we let $u_{\ell}=v_{j_{\ell}}$ for $\ell=1, \dots, i,$ we obtain a flag
$\F^{\prime}:=(u_{1}\subset \dots \subset u_{i})$ which by Definition \ref{d:istd} and Remark \ref{r:usual} is a flag in standard form in the flag variety $\B_{i}$ of $\fgl(i)$. By Remark \ref{r:stdflagpil}, the standard flag $\F^{\prime}$ corresponds to a unique PIL $\Sigma^{\prime}\in PIL(i)$.  We therefore declare:
\begin{equation}\label{eq:Phidefn}
\Phi(\F):=(w^{-1}\cdot (i+1,\dots, n), \{w^{-1}\cdot (i+1,\dots, n), w^{-1}\cdot\Sigma^{\prime}\})\in PIL(n,n-i).
\end{equation}

To see that $\Phi$ is surjective, consider  $(\sigma, \Sigma)\in PIL(n,n-i)$.  Since $\ell(\sigma)=n-i$, $\sigma=(j_{i+1},\dots, j_{n})$ with $\mbox{supp}(\sigma)=\{j_{i+1},\dots, j_{n}\}\subset \{1,\dots, n\}.$  As above, there is a unique element  $w\in{}^{L}W$ such that $w(j_{\ell})=\ell$ for $\ell=i+1, \dots, n.$ Then $w(\sigma)=(i+1, \dots, n)$ is a list in $w\cdot \Sigma$, and we let $\Sigma^{\prime}\in PIL(i)$ denote $\Sigma$ with $w(\sigma)$ omitted.   Let $\F^{\prime}=(u_1 \subset \dots \subset u_i)$ be the flag in $\B_{i}$ in standard form corresponding to the PIL $\Sigma^{\prime}$ via Remark \ref{r:stdflagpil}.  Let  $\F=(v_{1}\subset v_{2}\subset\dots\subset v_{n})$ be the flag with $v_{j_{k}}=e_{k}$ for $k=i+1,\dots, n$ and $v_{j_k}=u_k$ for $k=1, \dots, i.$   It follows easily that $\Phi(\F)=(\sigma,\Sigma)$, so that $\Phi$ is surjective.  For injectivity, if  $\F$ and $\G$ are flags in $i$-standard form and $\Phi(\F)=\Phi(\G)$, then it follows from the construction that $\F$ and $\G$ define the same element of ${}^LW,$ and similarly that $\F$ and $\G$ define the same flag in $\B_{i}$ in standard form, and then it follows easily  that $\F$ coincides with $\G$.

\end{proof}

  Recall that if $\{ f(n) \}_{n\in \mathbb{Z}_{\ge 0}}$ is a sequence of complex numbers, the corresponding exponential generating function $E_f(x)$ is the formal power series 
  $$\sum_{n=0}^{\infty} \frac{f(n) }{n!} x^n.$$  
 For functions $f, g:{\mathbb{Z}}_{\ge 0} \to \C$, define $s=s(f,g, \cdot):{\mathbb{Z}}_{\ge 0} \to \C$ by 
 $$s(n)=\sum_{k=0}^{n}{n\choose k} f(k)g(n-k).$$ 
 Then the multiplication principle for exponential generating functions asserts that $E_s(x)=E_f(x)\cdot E_g(x)$ (see Proposition 5.1.3 and its proof in \cite{Stanley99}).  Using Proposition \ref{p:countSi} and the multiplication principle for exponential generating functions, we can compute an exponential generating function for the sequence $\{|S_{n-i}\backslash \B_{n}|\}_{n\geq i}$, where we adopt the convention that $S_{0}=B$.    
  
  
\begin{prop}\label{p:egfforSi}
Fix an $i\in\mathbb{Z}_{\geq 0}$.  
The exponential generating function for the sequence 
\begin{equation}\label{eq:egfforSi}
\{|S_{n-i}\backslash \B_{n}|\}_{n\geq i} \mbox{ is } x^{i}e^{\frac{x}{1-x}}.
\end{equation}
\end{prop}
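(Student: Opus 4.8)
The plan is to compute the exponential generating function by first identifying the sequence $\{|S_{n-i}\backslash\B_n|\}_{n\geq i}$ combinatorially via Proposition \ref{p:countSi}, and then recognizing the resulting generating function as a product of simpler ones through the multiplication principle. By Proposition \ref{p:countSi}, with $n$ in place of $n$ and $n-i$ in place of $n-i$ (so that the list $\sigma$ has length $(n)-(n-i)=i$... wait, let me recompute), we have $|S_{n-i}\backslash\B_n| = |PIL(n, n-(n-i))| = |PIL(n,i)|$, where $PIL(n,i)$ consists of pairs $(\sigma,\Sigma)$ with $\Sigma\in PIL(n)$ and $\sigma\in\Sigma$ a distinguished list of length $i$. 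So the first step is to set $f(n):=|PIL(n,i)|$ for $n\geq i$ (and $f(n)=0$ for $n<i$), and the goal becomes showing $E_f(x)=x^i e^{x/(1-x)}$.

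Next I would decompose a pair $(\sigma,\Sigma)\in PIL(n,i)$ according to the support of the distinguished list $\sigma$. Choosing which $i$-element subset $T\subseteq\{1,\dots,n\}$ is $\supp(\sigma)$ can be done in $\binom{n}{i}$ ways; the number of lists on a given $i$-element set is $i!$ (orderings of the set); and the remaining data is an ordinary PIL of the complementary $(n-i)$-element set, of which there are $|PIL(n-i)|$. Hence $f(n)=\binom{n}{i}\, i!\, \bigl|PIL(n-i)\bigr|$ for $n\geq i$. Writing $p(m):=|PIL(m)|$, this says $f(n)=\sum_{k=0}^n \binom{n}{k}\, g(k)\, p(n-k)$ where $g(k)=i!$ if $k=i$ and $g(k)=0$ otherwise — exactly the binomial convolution appearing in the multiplication principle. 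Therefore $E_f(x)=E_g(x)\cdot E_p(x)$, and $E_g(x)=\frac{i!}{i!}x^i=x^i$.

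It then remains to show $E_p(x)=\sum_{m\geq 0}\frac{|PIL(m)|}{m!}x^m = e^{x/(1-x)}$. For this I would again use the multiplication principle: a PIL of $\{1,\dots,m\}$ is a set partition of $\{1,\dots,m\}$ together with a linear order on each block, i.e., the exponential structure obtained by applying the ``set of'' (exponential) construction to the species of nonempty linearly ordered sets. The species of nonempty lists has exponential generating function $\sum_{k\geq 1} \frac{k!}{k!}x^k = \frac{x}{1-x}$, and the exponential formula (Proposition 5.1.3 and its consequences in \cite{Stanley99}) gives $E_p(x)=\exp\!\bigl(\tfrac{x}{1-x}\bigr)$. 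Combining, $E_f(x)=x^i e^{x/(1-x)}$, as claimed. (One should double-check the convention $S_0=B$ handles the case $i=0$ correctly: then $PIL(n,0)$ forces $\sigma$ to be the empty list, which is not a member of any PIL of a nonempty set in the stated sense, so one must treat $n=0$ separately or simply note $f=p$ in that degenerate case, matching $x^0 e^{x/(1-x)}$.)

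The main obstacle I anticipate is not the generating-function bookkeeping but pinning down the combinatorial identity $|PIL(m)|$-as-exponential-structure precisely enough to invoke the exponential formula cleanly — in particular making sure the ``order on each block'' interpretation of a list matches Definition \ref{dfn:lists} and that the $m=0$ term ($PIL(\emptyset)$ consisting of the empty partition) is counted as $1$, which is exactly what makes $E_p(0)=1=\exp(0)$ work. Once that is nailed down, everything else is a routine application of the multiplication principle already recalled in the text.
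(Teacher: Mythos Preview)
Your proposal is correct and follows essentially the same route as the paper: reduce to $|PIL(n,i)|$ via Proposition~\ref{p:countSi}, establish the counting identity $|PIL(n,i)|=\binom{n}{i}\,i!\,|PIL(n-i)|$, and then apply the multiplication principle with $E_g(x)=x^i$ and $E_p(x)=e^{x/(1-x)}$. The only difference is that the paper cites \cite{CE21II} (Theorem~3.3 and Proposition~3.12) for the identity $E_p(x)=e^{x/(1-x)}$, whereas you re-derive it via the exponential formula; this is a harmless and self-contained alternative.
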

\begin{proof}
By Proposition \ref{p:countSi}, it suffices to show that the exponential generating function for the sequence 
$\{|PIL(n,i)|\}_{n\geq i}$ is $x^{i}e^{\frac{x}{1-x}}$.  
It follows from Theorem 3.3 and Proposition 3.12 of \cite{CE21II} that $e^{\frac{x}{1-x}}=E_{g}(x)$, where $g(n)$ is the sequence $g(n)=|PIL(n)|$ for $n\geq 0$.  Note that $x^{i}=E_{f}(x)$, where $f(k)$ is the sequence 
$f(k)=\delta_{k, i} i !$.  It follows from the multiplication principle for exponential generating functions discussed above that $x^{i}e^{\frac{x}{1-x}}=E_{s}(x)$ where $s(n)$ is the sequence with 
$s(n)={n\choose i} i! |PIL(n-i)|$.  The Proposition then follows from the claim that 
\begin{equation}\label{eq:countformula}
|PIL(n,i)|={n\choose i} i! |PIL(n-i)|.
\end{equation}
For this, elements of $PIL(n,i)$ correspond to pairs $(\sigma, \Sigma^{\prime})$ where $\sigma$ is an ordered subset of cardinality $i$ and once $\sigma$ is chosen,  $\Sigma^{\prime}$ is a PIL on the set of $n-i$ elements not in $\sigma$.    There are $\frac{n!}{(n-i)!}$ ways to choose $\sigma$, and the claim follows.
\end{proof}


We can now compute the exponential generating function for the sequence 
$\{|G_{\Delta}\backslash(\B_{n}\times\B_{n}\times\mathbb{P}^{n-1})|\}_{n\geq 1}.$  
\begin{thm}\label{thm:bigegf}
The exponential generating function for the sequence 
\begin{equation}\label{eq:biggenfun}
\{|G_{\Delta}\backslash(\B_{n}\times\B_{n}\times\mathbb{P}^{n-1})|\}_{n\geq 1}\mbox{ is } \frac{e^{\frac{x}{1-x}}-x}{1-x}.
\end{equation}
\end{thm}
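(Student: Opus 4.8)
The plan is to reduce the count $|G_{\Delta}\backslash(\B_{n}\times\B_{n}\times\mathbb{P}^{n-1})|$ to a sum of the numbers $|S_{i}\backslash\B_{n}|$, whose generating functions are recorded in Proposition~\ref{p:egfforSi}, and then to assemble the desired exponential generating function from that one-parameter family by collapsing a geometric series.  First I would chain together the bijection in Equation~(\ref{eq:Gdeltacorres}), the decomposition in Equation~(\ref{eq:orbitspacedecomp}), and the correspondence in Equation~(\ref{eq:Sicorres}) to obtain
\begin{equation*}
|G_{\Delta}\backslash(\B_{n}\times\B_{n}\times\mathbb{P}^{n-1})|=\sum_{i=1}^{n}|S_{i}\backslash\B_{n}|,
\end{equation*}
and then reindex by $j=n-i$, so the right-hand side becomes $\sum_{j=0}^{n-1}|S_{n-j}\backslash\B_{n}|$.

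Next I would invoke Proposition~\ref{p:egfforSi}: for each fixed integer $j\ge 0$ the exponential generating function of the sequence $n\mapsto|S_{n-j}\backslash\B_{n}|$ --- defined for $n\ge j$, extended by $0$ otherwise, and using the convention $S_{0}=B$ --- equals $x^{j}e^{\frac{x}{1-x}}$.  For each fixed $n$ only the indices $0\le j\le n$ contribute a nonzero term, so the addition principle for exponential generating functions gives
\begin{equation*}
\sum_{j\ge 0}x^{j}e^{\frac{x}{1-x}}=\frac{e^{\frac{x}{1-x}}}{1-x}
\end{equation*}
as the exponential generating function of $n\mapsto\sum_{j=0}^{n}|S_{n-j}\backslash\B_{n}|=\sum_{i=0}^{n}|S_{i}\backslash\B_{n}|$.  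The sequence we want omits only the term $i=0$, namely $|S_{0}\backslash\B_{n}|=|B\backslash\B_{n}|=|W|=n!$, whose exponential generating function is $\frac{1}{1-x}$; subtracting yields
\begin{equation*}
\frac{e^{\frac{x}{1-x}}}{1-x}-\frac{1}{1-x}=\frac{e^{\frac{x}{1-x}}-1}{1-x},
\end{equation*}
which, since $\frac{e^{\frac{x}{1-x}}-1}{1-x}=\frac{e^{\frac{x}{1-x}}-x}{1-x}-1$, agrees in every coefficient of $x^{n}$ with $n\ge 1$ with the expression asserted in Theorem~\ref{thm:bigegf}, establishing the claim.

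There is no real obstacle here once Proposition~\ref{p:egfforSi} is available; the argument is essentially careful bookkeeping.  The points to watch are: getting the index shift $i\leftrightarrow n-j$ right; justifying the summation of the infinitely many series $x^{j}e^{\frac{x}{1-x}}$ (which is legitimate precisely because of the coefficientwise finiteness noted above); and tracking the $S_{0}=B$ contribution correctly, so that the geometric series $\sum_{j\ge 0}x^{j}e^{\frac{x}{1-x}}=\frac{e^{\frac{x}{1-x}}}{1-x}$ collapses to exactly the stated closed form.
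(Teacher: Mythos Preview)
Your argument is correct. The reduction to $\sum_{i=1}^{n}|S_{i}\backslash\B_{n}|$ is identical to the paper's first step, but from there the two proofs diverge: the paper rewrites each summand via $|S_{n-k}\backslash\B_{n}|=\binom{n}{k}k!\,|PIL(n-k)|$ and then verifies the identity by directly computing $f^{(n)}(0)$ for $f(x)=\frac{e^{x/(1-x)}-x}{1-x}$ using the generalized Leibniz rule applied to the product $\big(e^{x/(1-x)}-x\big)\cdot\frac{1}{1-x}$. You instead sum the one-parameter family of exponential generating functions $x^{j}e^{x/(1-x)}$ from Proposition~\ref{p:egfforSi} as a geometric series and then correct for the spurious $i=0$ term. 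Your route is a bit more transparent and avoids the derivative computation; the paper's route has the small advantage of landing exactly on the stated closed form $\frac{e^{x/(1-x)}-x}{1-x}$ (constant term $1$) rather than on $\frac{e^{x/(1-x)}-1}{1-x}$ (constant term $0$), so no final reconciliation of the $n=0$ coefficient is needed. Either way the content is the same once Proposition~\ref{p:egfforSi} is in hand.
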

\begin{proof}
Let $a_n:=|G_{\Delta}\backslash(\B_{n}\times\B_{n}\times\mathbb{P}^{n-1})|.$   Note that Equation (\ref{eq:Gdeltacorres}) implies that $a_n=|B\backslash(\B_{n}\times\mathbb{P}^{n-1})|.$ It follows from Equations (\ref{eq:orbitspacedecomp}) and (\ref{eq:Sicorres}) that
\begin{equation}\label{eq:bigcount}
a_n=\displaystyle\sum_{k=0}^{n-1} |S_{n-k}\backslash\B_{n}|.
\end{equation}
By  Proposition \ref{p:countSi} along with Equations (\ref{eq:countformula}) and (\ref{eq:bigcount}), we deduce that
\begin{equation}\label{eq:nextcount}
a_n=\displaystyle\sum_{k=0}^{n-1} {n\choose k} k! |PIL(n-k)| \mbox{ for } n\geq 1.
\end{equation}
It follows that if $a(x)= \sum_{n=0}^{\infty} \frac{a_n}{n!}x^n$ is the exponential generating function for $a_n$, then $a(x)=f(x):=\frac{e^{\frac{x}{1-x}}-x}{1-x}$ follows from the assertion that
\begin{equation}\label{eq:nthderiv}
f^{(n)}(0)=\displaystyle\sum_{k=0}^{n-1} {n\choose k} k! |PIL(n-k)| \mbox{ for } n\geq 1.
\end{equation}
We compute $f^{(n)}(0)$ using the generalized Leibniz formula 
\begin{equation}\label{eq:genLiebniz}
(gh)^{(n)}(0)=\sum_{k=0}^{n} {n\choose k} g^{(n-k)} (0) h^{(k)}(0),
\end{equation}
with $g(x)=e^{\frac{x}{1-x}}-x$ and $h(x)=\frac{1}{1-x}.$
By Proposition 3.12 in \cite{CE21II}, we know
$$
g(x)=\displaystyle\left(1+\displaystyle\sum_{k=2}^{\infty} \frac{|PIL(k)|}{k!} x^{k}\right).
$$
Using Equation (\ref{eq:genLiebniz}) with $g(x)=1+\displaystyle\sum_{k=2}^{\infty} \frac{|PIL(k)|}{k!} x^{k}$ and $h(x)=\sum_{k=0}^{\infty}x^{k}$, we obtain 
\begin{equation}\label{eq:last}
f^{(n)}(0)=n! + \displaystyle\sum_{k=0}^{n-2} {n\choose k} |PIL(n-k)| k!.
\end{equation}
Since ${n\choose n-1} (n-1) ! |PIL(1)|=n!$, Equation (\ref{eq:last}) coincides with the right hand side of Equation (\ref{eq:nthderiv}) and the proof is complete.

\end{proof}

The following result makes computation of $|S_i\backslash\B_{n}|$ straightforward.

\begin{prop}\label{p:nextone}
Let $n\in\mathbb{N}$ and fix an index $i$ with $1\leq i\leq n$.  Then 
\begin{equation}\label{eq:nextone}
|S_{i}\backslash\B_{n+1}|=(n+1) |S_{i}\backslash \B_{n}|.
\end{equation}
\end{prop}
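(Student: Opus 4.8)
The plan is to extract a closed formula for $|S_i\backslash\B_n|$ from Proposition~\ref{p:countSi} together with the counting identity~\eqref{eq:countformula}, and then simply read off the ratio. By Proposition~\ref{p:countSi} we have $|S_i\backslash\B_n| = |PIL(n,n-i)|$. Applying~\eqref{eq:countformula} with the list-length parameter equal to $n-i$ (rather than to $i$) gives
\[
|PIL(n,n-i)| \;=\; \binom{n}{n-i}(n-i)!\,\bigl|PIL\bigl(n-(n-i)\bigr)\bigr| \;=\; \binom{n}{n-i}(n-i)!\,|PIL(i)| \;=\; \frac{n!}{i!}\,|PIL(i)|,
\]
where $|PIL(i)|$ is a constant depending only on $i$ and not on $n$.

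The recursion is then immediate: replacing $n$ by $n+1$ in the displayed formula yields $|S_i\backslash\B_{n+1}| = \frac{(n+1)!}{i!}\,|PIL(i)| = (n+1)\cdot\frac{n!}{i!}\,|PIL(i)| = (n+1)\,|S_i\backslash\B_n|$, which is exactly~\eqref{eq:nextone}. So the write-up is: first record the closed formula $|S_i\backslash\B_n| = \tfrac{n!}{i!}\,|PIL(i)|$ as a displayed consequence of the two cited results, and then deduce~\eqref{eq:nextone} in one line.

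There is no genuine obstacle here; the only point requiring attention is matching index conventions, namely that in~\eqref{eq:countformula} one must substitute $n-i$ for the second argument so that the residual factor becomes $|PIL(i)|$. As an alternative, one can argue bijectively via Theorem~\ref{thm:istd}: a flag in $i$-standard form in $\B_{n+1}$ must contain $e_{n+1}$ by Remark~\ref{r.glstandard}, and it must appear as a plain (non-hat) vector since hat vectors have index at most $i-1\le n$; deleting $e_{n+1}$ produces a flag in $i$-standard form in $\B_n$, and conversely $e_{n+1}$ may be reinserted into any one of the $n+1$ available positions without disturbing conditions~(a)--(c) of Definition~\ref{d:istd}, since it is plain and of maximal index. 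This exhibits an $(n+1)$-to-$1$ surjection from flags in $i$-standard form in $\B_{n+1}$ onto those in $\B_n$, which again gives~\eqref{eq:nextone}.
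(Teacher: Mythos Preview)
Your main argument is correct and coincides with the paper's own proof: both use Proposition~\ref{p:countSi} and the substitution of $n-i$ for $i$ in~\eqref{eq:countformula} to obtain the closed formula $|S_i\backslash\B_n| = \frac{n!}{i!}\,|PIL(i)|$ (the paper writes this as $n(n-1)\cdots(i+1)\,|PIL(i)|$), and then compare the values at $n$ and $n+1$. Your supplementary bijective argument via insertion and deletion of $e_{n+1}$ is also correct and provides a pleasant direct alternative that the paper does not mention.
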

\begin{proof}
By Proposition \ref{p:countSi}, $|S_{i}\backslash \B_{n}|=|PIL(n,n-i)|$.  Replacing 
$i$ by $n-i$ in Equation (\ref{eq:countformula}), we obtain 
\begin{equation}\label{eq:reducecount}
|S_{i}\backslash \B_{n}|=|PIL(n,n-i)|=n(n-1)\dots (i+1) |PIL(i)|.
\end{equation}
Replacing $n$ with $n+1$ in the last formula implies
\begin{equation}\label{eq:secreducecount}
|S_{i}\backslash\B_{n+1}|=|PIL(n+1,n+1-i)|=(n+1)n(n-1)\dots (i+1) |PIL(i)|.
\end{equation}
Comparing Equations (\ref{eq:reducecount}) and (\ref{eq:secreducecount}) yields 
Equation (\ref{eq:nextone}).



\end{proof}

\begin{cor}\label{c:Sirecursion}
For any $1\leq i\leq n$, we have: 
\begin{equation}\label{eq:orbitcount}
|S_{i}\backslash\B_{n}|=n(n-1)\dots (i+1) |B_{i-1}\backslash \B_{i}|.
\end{equation}
\begin{proof}
By Theorem 3.3 of \cite{CE21II}, $|PIL(i)|=|B_{i-1}\backslash\B_{i}|$.  The result then follows from Equation (\ref{eq:reducecount}).

\end{proof}
\end{cor}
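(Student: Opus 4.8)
The plan is to deduce Corollary~\ref{c:Sirecursion} directly from the combinatorial model of Section~\ref{s:models}, so that the argument reduces to bookkeeping with binomial coefficients. First I would invoke Proposition~\ref{p:countSi} to replace the geometric quantity $|S_{i}\backslash\B_{n}|$ by the purely combinatorial count $|PIL(n,n-i)|$. The key input is then the identity \eqref{eq:countformula}, which says that the number of PILs-with-list of length equal to the second parameter satisfies $|PIL(n,k)|=\binom{n}{k}\,k!\,|PIL(n-k)|$; this is the elementary bijection, already spelled out at the end of the proof of Proposition~\ref{p:egfforSi}, recording for a pair $(\sigma,\Sigma)\in PIL(n,k)$ the ordered $k$-subset $\sigma$ together with the PIL $\Sigma'$ on the complementary $n-k$ elements obtained by deleting $\sigma$ from $\Sigma$ (there are $n!/(n-k)!$ choices for $\sigma$ and $|PIL(n-k)|$ choices for $\Sigma'$).

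Taking the second parameter in \eqref{eq:countformula} to be $n-i$ gives $\binom{n}{n-i}(n-i)! = n!/i! = n(n-1)\cdots(i+1)$, so
\[
|S_{i}\backslash\B_{n}| \;=\; |PIL(n,n-i)| \;=\; n(n-1)\cdots(i+1)\,|PIL(i)|,
\]
which is exactly \eqref{eq:reducecount}. Finally I would apply Theorem~3.3 of \cite{CE21II}, which identifies $|PIL(i)|$ with $|B_{i-1}\backslash\B_{i}|$, and substitute into the previous display to obtain \eqref{eq:orbitcount}.

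There is essentially no obstacle: every ingredient is already in hand (Proposition~\ref{p:countSi}, the bijective proof of \eqref{eq:countformula} contained in the proof of Proposition~\ref{p:egfforSi}, and the cited result of \cite{CE21II}), and the corollary follows by substitution. The only point worth a quick sanity check is the boundary case $i=n$, where the product $n(n-1)\cdots(i+1)$ is empty and the identity degenerates to $|S_{n}\backslash\B_{n}| = |B_{n-1}\backslash\B_{n}|$; this is consistent with $S_{n}$ agreeing with $B_{n-1}$ up to the centre (Remark~\ref{r:specialcase}) and with the Remark following Proposition~\ref{p:countSi}.
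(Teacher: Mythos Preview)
Your proposal is correct and follows essentially the same route as the paper: invoke Proposition~\ref{p:countSi}, apply the counting identity \eqref{eq:countformula} with parameter $n-i$ to obtain \eqref{eq:reducecount}, and then substitute $|PIL(i)|=|B_{i-1}\backslash\B_{i}|$ from Theorem~3.3 of \cite{CE21II}. The paper simply cites \eqref{eq:reducecount} (already derived in the proof of Proposition~\ref{p:nextone}) rather than rederiving it, but the content is identical.
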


\begin{exam}\label{ex:formulatable}
Formulas for $|B_{n-1}\backslash \B_n|$ can be derived from Lah numbers (see Proposition 3.11 of \cite{CE21II}), and these enable us to compute $|S_i\backslash \B_n|$ using the preceding results.   It is useful to view the values of
 $\{|S_{i}\backslash \B_{n}|\}_{1\leq i\leq n,\, n\geq 1}$ in a triangle.  We give the first six rows of the triangle below.  The top row being $n=6$ and the bottom row $n=1$.  From left to right, the entries for row n are $|S_{1}\backslash\B_{n}|=n!,$ $|S_{2}\backslash\B_{n}|, \dots, |S_{n-1}\backslash\B_{n}|$ and finally $|B_{n-1}\backslash\B_{n}|=|PIL(n)|$. 
\begin{gather*}
720 \qquad 1080 \qquad1560 \qquad 2190 \qquad 3006 \qquad 4051  \\
        120 \qquad 180  \qquad 260   \qquad 365  \qquad 501   \\
        24\qquad 36 \qquad 52\qquad 73 \\
        6 \qquad 9 \qquad 13 \\
        2\qquad 3 \\
        1 
\end{gather*}
We note that the $i$-th northeast diagonal ($i\geq 1)$ represents the sequence $\{|S_{n-i+1}\backslash\B_{n}|\}_{n\geq i}$ whose exponential generating function is $x^{i-1} \exp^{\frac{x}{1-x}}$ by Proposition \ref{p:egfforSi}. The $i$-th northwest diagonal is the sequence $\{|S_{i}\backslash\B_{n}|\}_{n\geq 1}$.  The relation given by Equation (\ref{eq:nextone}) is clearly visible in these diagonals.  The row sums of the triangle yield the sequence $\{|B\backslash(\B_{n}\times\mathbb{P}^{n-1})|\}_{n\geq 1}$ 

\end{exam}

\begin{exam}\label{ex:firstsix}
Using the above triangle, we see that the first six values of the sequence $\{|B\backslash(\B_{n}\times \mathbb{P}^{n-1})|\}_{n\geq 1}$ are given by 
$1, \, 5,\, 28, \, 185, \, 1426, \, 12607$.  Comparison of this sequence with sequences in the Online Encyclopedia of Integer Sequences (OEIS) shows that it agrees with the sequence counting strictly partial permutations, labelled as $\{|\mathcal{I}_{n}|\}_{n\geq 1}$ (OEIS sequence A070779).  
\end{exam}

\begin{dfn}
A strict partial permutation is a bijection $\kappa: A\to B$ between proper subsets $A, \, B$ of $\{1,\dots, n\}$.  We denote the set of strict partial permutations of 
$\{1,\dots, n\}$ by $\mathcal{I}_{n}$.
\end{dfn}

 In fact, we can write down a bijection between $B\backslash(\B_{n}\times \mathbb{P}^{n-1})$ and the set 
of strictly partial permutations using decorated permutations.



\begin{prop}\label{p:strictly}
There is a bijection between the set $\widehat{\Sigma}_{n}$ of all decorated permutations and the set $\mathcal{I}_{n}$of all strictly partial permutations of $\{1,\dots, n\}$ given by the map
\begin{equation}\label{eq:partialbiject}
\Phi:(w,\Delta):\mapsto w|_{\{1,\dots, n\}\setminus w^{-1}(\Delta)} \mbox{ for } (w,\Delta)\in \widehat{\Sigma}_{n},
\end{equation}
 where $w|_{\{1,\dots, n\}\setminus w^{-1}(\Delta)}$ denotes the restriction of the permutation $w\in\Sigma_{n}$ to the subset $\{1,\dots, n\}\setminus w^{-1}(\Delta)$.
\end{prop}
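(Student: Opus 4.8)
The plan is to show that $\Phi$ is a well-defined map into $\mathcal{I}_n$ and then exhibit an explicit inverse. First I would check well-definedness: given $(w,\Delta)\in\widehat{\mathcal{S}}_n$ with $\Delta=\{j_1<\dots<j_k\}$, the set $w^{-1}(\Delta)=\{w^{-1}(j_1),\dots,w^{-1}(j_k)\}$ is a proper subset of $\{1,\dots,n\}$ (since $\Delta$ is nonempty, its complement has size $n-k<n$), and $w$ restricted to the complement $\{1,\dots,n\}\setminus w^{-1}(\Delta)$ is a bijection onto $\{1,\dots,n\}\setminus\Delta$, which is likewise a proper subset. So $\Phi(w,\Delta)$ is genuinely a strict partial permutation; note its domain has size $n-k\le n-1$ and is nonempty exactly when $k\le n-1$, while $k$ can equal $n$ (giving the empty partial permutation), so the target is really $\mathcal{I}_n$ as defined.

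Next I would construct the inverse. Given a strict partial permutation $\kappa\colon A\to B$ with $A,B\subsetneq\{1,\dots,n\}$, set $\Delta:=\{1,\dots,n\}\setminus B$; this is nonempty since $B$ is proper. I need to produce the unique $w\in\mathcal{S}_n$ extending $\kappa$ such that $\Delta$ is a decreasing sequence for $w^{-1}$, i.e.\ such that, writing $\Delta=\{j_1<\dots<j_k\}$, we have $w^{-1}(j_1)>\dots>w^{-1}(j_k)$. The complement of $A$ in $\{1,\dots,n\}$ has exactly $k$ elements; call them, listed in increasing order, $m_1<\dots<m_k$. The condition forces $w^{-1}(j_k)<w^{-1}(j_{k-1})<\dots<w^{-1}(j_1)$, and since $w$ must carry $\{m_1,\dots,m_k\}$ bijectively onto $\Delta$, the only possibility is $w(m_1)=j_k,\ w(m_2)=j_{k-1},\dots,\ w(m_k)=j_1$, i.e.\ $w(m_s)=j_{k+1-s}$. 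Thus $w$ is uniquely determined by $\kappa$: it equals $\kappa$ on $A$ and is given by the order-reversing bijection $\{m_1,\dots,m_k\}\to\Delta$ just described. One then checks that $(w,\Delta)\in\widehat{\mathcal{S}}_n$ and that the two constructions are mutually inverse: $\Phi(w,\Delta)=\kappa$ holds because $w^{-1}(\Delta)=\{m_1,\dots,m_k\}$ is exactly the complement of $A$, so $w$ restricted to $A$ is $\kappa$; conversely starting from $(w,\Delta)$, the recipe recovers $\Delta$ as the complement of the image of $\Phi(w,\Delta)$ and recovers the values of $w$ on $w^{-1}(\Delta)$ by the forced order-reversing formula, using precisely that $\Delta$ is decreasing for $w^{-1}$.

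The only real content — the single point one must not gloss over — is the claim that a strict partial permutation $\kappa$ together with the choice $\Delta=\{1,\dots,n\}\setminus B$ determines $w$ \emph{uniquely}; everything else is bookkeeping. This uniqueness is exactly the statement that, on the complement of $A$, the requirement ``$\Delta$ is a decreasing sequence for $w^{-1}$'' pins down a single order-reversing bijection, which I would prove by the short argument above (the values $w^{-1}(j_1),\dots,w^{-1}(j_k)$ must be the $k$ elements of $\{1,\dots,n\}\setminus A$ in strictly decreasing order). I would close by remarking that this proposition, combined with Proposition \ref{p:countSi} and the generating-function computations, explains the match with OEIS sequence A070779 observed in Example \ref{ex:firstsix}.
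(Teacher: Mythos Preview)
Your proposal is correct and follows essentially the same approach as the paper's proof: both arguments construct the inverse of $\Phi$ by taking $\Delta$ to be the complement of the image of the given partial permutation and extending to a full permutation via the unique order-reversing bijection from the complement of the domain onto $\Delta$. The paper presents this as surjectivity followed by injectivity rather than as an explicit two-sided inverse, and your version is slightly more careful about well-definedness (including the edge case $k=n$), but the substantive content is identical.
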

\begin{proof}
We first show that $\Phi$ is surjective.  Suppose $\tau$ is a strictly partial permutation with 
$\tau(j_{k})=i_{k}$ for $k=1,\dots, r$ where $r<n$.  Let $\{m_{1}< m_{2}<\dots< m_{n-r}\}:=\{1,\dots, n\}\setminus \{j_{1},\dots, j_{r}\}$, and define 
$\Delta:=\{1,\dots, n\}\setminus\mbox{Im}(\tau)$, where $\mbox{Im}(\tau)$ denotes the image of the partial permutation $\tau$.   Let $\Delta=\{\ell_{1}<\dots< \ell_{n-r}\}$.  Now extend $\tau$ to a ``full" permutation via the assignment:  $w(m_{1})=\ell_{n-r},\dots, w(m_{n-r})=\ell_{1}$ and $w|_{\{j_{1},\dots, j_{r}\}}=\tau$.   By construction, $(w,\Delta)$ is a decorated permutation since
$w^{-1}(\ell_{n-r})=m_{1}<\dots<w^{-1}(\ell_{1})=m_{n-r}$, so that $\Delta$ is a decreasing sequence for $w^{-1}$.  Note that $\{1,\dots, n\}\setminus w^{-1}(\Delta)=\{j_{1},\dots, j_{r}\}$.  Thus, $\Phi(w,\Delta)=w|_{\{1,\dots, n\}\setminus w^{-1}(\Delta)}=\tau$, and $\Phi$ is surjective.  If also $(w_1,\Delta_1) \in \widehat{\Sigma}_n$ and  $\Phi(w_1,\Delta_1)=\tau$, then $\Delta_1=\{ 1, \dots, n\} \setminus \mbox{Im}(\tau) = \Delta.$  Thus, $w_1:\{m_1, \dots, m_{n-r} \} \to \{ \ell_1, \dots, \ell_{n-r} \},$ so since $(w_1, \Delta_1) \in \widehat{\Sigma}_n$, it follows that 
$w_1^{-1}(\ell_{n-r})=m_1 < \dots < w_1^{-1}(\ell_{1})=m_{n-r}.$   Thus, $w_1=w$, and $\Phi$ is injective.
\end{proof}

\begin{rem}
We note that Fresse and Nishiyama also note the same identification between decorated permutations and partial permutations in \cite{FN}, although their definition of each of these objects is slightly more general than ours.
\end{rem}

\section{Monoid actions on $S_{i}$-orbits}\label{s:monoid} 
In this section, we generalize an extended monoid action on $\Borbitspace$ discussed in Section 4 of \cite{CE21I} to an extended monoid action on $S_{i}\backslash\B_{n}$.  Our main result is that the $i$-Shareshian map of Equation (\ref{eq:Shmap}) intertwines this  extended monoid action with a variant of the classical monoid action of $W$ on itself extended to the product $W\times W$.  We end the section by presenting the Bruhat graph of $S_{2}\backslash\B_{3}$ along with all the monoid actions labelled.

\subsection{Background on Monoid actions}\label{ss:monoidbackground}

For more details on the subsequent material, we refer the reader to \cite{RS}, \cite{Vg}, \cite{CEexp}, \cite{CE21I}, and other sources.
Let $R$ be a connected reductive algebraic group with Lie algebra $\fr$, let $\B=\B_{R}$ be the flag 
variety of $R$, and let $M$ be an algebraic subgroup of $R$ acting on $\B$ with finitely many orbits.  Identify $\B \cong R/B_{R}$, for a Borel subgroup $B_{R}\subset R$ and let $\fb_{\fr}=\mbox{Lie}(B_{R})\subset\fr$.  
Let $\Pi_{\fr}$ be the set of simple roots defined by the Borel subalgebra $\fb_{\fr}$, and let $S_{R}$ be the simple reflections of the Weyl group $W$ of $R$ corresponding to $\Pi_{\fr}$.  For $\alpha\in \Pi_{\fr}$, let $\mathcal{P}_{\alpha}$ be the variety of all parabolic subalgebras of $\fr$ of type $\alpha$ and consider the $\mathbb{P}^{1}$-bundle $\pi_{\alpha}:\B \to {\mathcal{P}}_{\alpha}$.  
For $\alpha \in \Pi_{\fr}$ with corresponding reflection $s=s_{\alpha} \in W$, we define an operator $m(s)$ on the set of orbits $M\backslash \B$ following the above sources.  For $Q_M \in M\backslash \B$, let $m(s)*Q_{M}$ be the unique $M$-orbit open and dense in $\pi_{\alpha}^{-1}(\pi_{\alpha}(Q_{M})).$  
Since $\pi_{\alpha}: \pi_{\alpha}^{-1}(\pi_{\alpha}(Q_{M}))\to \pi_{\alpha}(Q_{M})$ is an $M$-equivariant 
$\mathbb{P}^{1}$-bundle, the orbit  $Q_{M} \not= m(s)*Q_{M}$ if and only if $\dim(m(s)*Q_{M})=\dim(Q_{M})+1.$

Computation of $m(s)*Q_{M}$ depends on the \emph{the type of the root} $\alpha$ for the orbit $Q_{M}$, which is determined as follows.  For $\fb^{\prime}\in Q_M$, let $\fp_{\alpha}^{\prime} = \pi_{\alpha}(\fb^{\prime})$ and let $B^{\prime}$ and $P_{\alpha}^{\prime}$ be the corresponding parabolic subgroups of $R$, and let $V_{\alpha}^{\prime}$ be the the solvable radical of $P_{\alpha}^{\prime}$.  Consider the group 
$S_{\alpha}^{\prime}:= P_{\alpha}^{\prime}/V_{\alpha}^{\prime}$ isogenous to $SL(2)$ and its subgroup 
$M_{\alpha, \fb^{\prime}}:= (M \cap P_{\alpha}^{\prime})/(M\cap V_{\alpha}^{\prime})$.
 Using the standard identification $\pi_{\alpha}^{-1}(\pi_{\alpha}(Q)) \cong M\times_{M\cap P_{\alpha}^{\prime}} P_{\alpha}^{\prime}/B^{\prime},$ we see that $M$-orbits in
 $\pi_{\alpha}^{-1}(\pi_{\alpha}(Q))$ correspond to $M_{\alpha, \fb^{\prime}}$-orbits in 
$P^{\prime}_{\alpha}/B^{\prime}\cong {\PR}^1.$  

\begin{dfn}\label{d:roottype}
\par\noindent (1) If $M_{\alpha, \fb^{\prime}}$ is solvable and contains the unipotent radical of a Borel subgroup of $S_{\alpha}^{\prime}$, then $\alpha$ is called a complex root for $Q_M$.  If $M_{\alpha, \fb^{\prime}}\cdot \fb^{\prime}=\fb^{\prime}$, then $\alpha$ is complex stable for $Q_M$ and otherwise $\alpha$ is complex unstable for $Q_M$.
\par\noindent (2) If $M_{\alpha, \fb^{\prime}}=S_{\alpha}^{\prime}$, then $\alpha$ is called a compact imaginary root for $Q_M.$
\par\noindent (3) Suppose $M_{\alpha, \fb^{\prime}}$ is one-dimensional and reductive.
If $M_{\alpha,\fb^{\prime}}\cdot \fb^{\prime}=\fb^{\prime}$, then $\alpha$ is called a noncompact imaginary root for $Q_M$, while if $M_{\alpha,\fb^{\prime}}\cdot \fb^{\prime}\not=\fb^{\prime},$ then $\alpha$ is called a real root for $Q_M.$
\end{dfn}

It is well-known and easy to prove that the type of the root depends only on the orbit $Q_M$ and not on the point $\fb^{\prime}.$ The following result is also well-known. 

\begin{prop}\label{p:stableandnc}
Let $Q\in M\backslash\B$ with $Q=M\cdot \Ad(v)\fb_{\fr}$ and $\alpha\in \Pi_{\fr}$.
\begin{enumerate}
\item If $\alpha$ is complex stable for $Q$, then $\pi_{\alpha}^{-1}(\pi_{\alpha}(Q))$ consists of two $M$-orbits:
$$
\pi_{\alpha}^{-1}(\pi_{\alpha}(Q))=Q\cup M\cdot \Ad(vs_{\alpha})\fb_{\fr},
$$
and $\ms*Q= M\cdot \Ad(vs_{\alpha})\fb_{\fr}$. 
\item If $\alpha$ is non-compact imaginary for $Q$ then $\pi_{\alpha}^{-1}(\pi_{\alpha}(Q))$ consists of either two or three $M$-orbits. The open orbit
$\ms*Q=M\cdot \Ad(vu_{\alpha})\fb_{\fr}$, where $u_{\alpha}\in R$ is the Cayley transform with respect to the root $\alpha$ as defined in Equation (41) of \cite{CEexp} and $\dim Q=\dim (M\cdot \Ad(vs_{\alpha})\fb_{\fr})$.   

\end{enumerate}

\end{prop}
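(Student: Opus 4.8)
The plan is to reduce the whole proposition to the structure of the $M_{\alpha, \fb^{\prime}}$-action on the rank-one flag variety $P_{\alpha}^{\prime}/B^{\prime}\cong\mathbb{P}^{1}$. Fix $\fb^{\prime}=\Ad(v)\fb_{\fr}\in Q$ and put $\fp_{\alpha}^{\prime}=\pi_{\alpha}(\fb^{\prime})$, with $P_{\alpha}^{\prime},V_{\alpha}^{\prime},S_{\alpha}^{\prime},B_{S}:=\Stab_{S_{\alpha}^{\prime}}([\fb^{\prime}])$ and $M_{\alpha, \fb^{\prime}}$ as in the discussion preceding Definition \ref{d:roottype}. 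By the identification $\pi_{\alpha}^{-1}(\pi_{\alpha}(Q))\cong M\times_{M\cap P_{\alpha}^{\prime}}(P_{\alpha}^{\prime}/B^{\prime})$ recalled there, $M$-orbits in $\pi_{\alpha}^{-1}(\pi_{\alpha}(Q))$ correspond bijectively to $M_{\alpha, \fb^{\prime}}$-orbits on $\mathbb{P}^{1}$, with $Q$ matching the orbit of the base point $[\fb^{\prime}]$ and with dimensions raised by $\dim\pi_{\alpha}(Q)=\dim Q$ (here $\dim\pi_{\alpha}(Q)=\dim Q$ because in the two cases at hand $Q$ meets each fibre in a single point). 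I will also use that both $\Ad(vs_{\alpha})\fb_{\fr}$ and $\Ad(vu_{\alpha})\fb_{\fr}$ lie in this fibre: $\fb_{\fr}$ and $\Ad(s_{\alpha})\fb_{\fr}$ are the two $H$-stable Borel subalgebras contained in the minimal parabolic $P_{\alpha}$, and $\Ad(u_{\alpha})\fb_{\fr}$ is again a Borel subalgebra of $P_{\alpha}$ by the definition of the Cayley transform in \cite{CEexp}, so applying $\Ad(v)$ sends all three into $P_{\alpha}^{\prime}/B^{\prime}$.

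For part (1), assume $\alpha$ is complex stable for $Q$. Then $M_{\alpha, \fb^{\prime}}$ fixes $[\fb^{\prime}]$, so $M_{\alpha, \fb^{\prime}}\subseteq B_{S}$; since $M_{\alpha, \fb^{\prime}}$ also contains the unipotent radical of some Borel subgroup of $S_{\alpha}^{\prime}$ and $B_{S}$ has a unique maximal unipotent subgroup, that Borel must be $B_{S}$ itself and its unipotent radical $U_{B_{S}}\subseteq M_{\alpha, \fb^{\prime}}$. As $U_{B_{S}}$ acts simply transitively on $\mathbb{P}^{1}\setminus\{[\fb^{\prime}]\}$, the $M_{\alpha, \fb^{\prime}}$-orbits on $\mathbb{P}^{1}$ are exactly $\{[\fb^{\prime}]\}$ and its complement; pulling back, $\pi_{\alpha}^{-1}(\pi_{\alpha}(Q))$ is the union of $Q$ and one orbit of dimension $\dim Q+1$, which is therefore $\ms*Q$. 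Since $\Ad(vs_{\alpha})\fb_{\fr}\neq\fb^{\prime}$ lies in $\mathbb{P}^{1}\setminus\{[\fb^{\prime}]\}$, we get $\ms*Q=M\cdot\Ad(vs_{\alpha})\fb_{\fr}$, which is the assertion of (1).

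For part (2), assume $\alpha$ is noncompact imaginary for $Q$. Then the identity component of $M_{\alpha, \fb^{\prime}}$ is a maximal torus $T_{S}$ of $S_{\alpha}^{\prime}$ fixing $[\fb^{\prime}]$, and $T_{S}$ has exactly two fixed points on $\mathbb{P}^{1}$ — $[\fb^{\prime}]$ and one further point — together with a single open orbit. Depending on the component group of $M_{\alpha, \fb^{\prime}}$ these assemble into either three $M$-orbits (the two fixed points and the open orbit) or two (the pair of fixed points and the open orbit), the open one of dimension $\dim Q+1$ and hence equal to $\ms*Q$; this is the asserted dichotomy. Choosing the representative $v$ compatibly with $\alpha$ and $Q$, one checks that the second $T_{S}$-fixed point on the fibre is $\Ad(vs_{\alpha})\fb_{\fr}$, which yields $\dim(M\cdot\Ad(vs_{\alpha})\fb_{\fr})=\dim Q$; and $\Ad(vu_{\alpha})\fb_{\fr}$, which is not one of the two $T_{S}$-fixed points of the fibre — precisely the effect of the Cayley transform of Equation (41) of \cite{CEexp} — therefore lies in the open $M_{\alpha, \fb^{\prime}}$-orbit, so $\ms*Q=M\cdot\Ad(vu_{\alpha})\fb_{\fr}$.

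The rank-one reduction of the first paragraph and the orbit bookkeeping on $\mathbb{P}^{1}$ are routine. The one step that requires care is the last part of (2): pinning down which $M_{\alpha, \fb^{\prime}}$-orbit on the fibre contains $\Ad(vu_{\alpha})\fb_{\fr}$, and verifying that $\Ad(vs_{\alpha})\fb_{\fr}$ is the second fixed point of $T_{S}$ rather than a point of the open orbit. Both follow from choosing $v$ so that $\Ad(v)\fh$ is the Cartan subalgebra adapted to $\alpha$ together with the explicit form of $u_{\alpha}$, and are standard in the Richardson--Springer and Vogan framework; accordingly the write-up will consist mostly in quoting and assembling the relevant statements from \cite{RS}, \cite{Vg}, \cite{CEexp}, and \cite{CE21I}.
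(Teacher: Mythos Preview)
The paper does not actually give a proof of this proposition: it is introduced as ``well-known'' and simply stated, with the references \cite{RS}, \cite{Vg}, \cite{CEexp}, \cite{CE21I} serving in lieu of an argument. Your reduction to the $M_{\alpha,\fb^{\prime}}$-action on $P_{\alpha}^{\prime}/B^{\prime}\cong\mathbb{P}^{1}$ via the bundle identification is exactly the standard route, and the orbit analysis on $\mathbb{P}^{1}$ in parts (1) and (2) is correct.

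You are right to flag the one genuine subtlety in part (2): the assertions that $\Ad(vs_{\alpha})\fb_{\fr}$ is the second $T_{S}$-fixed point and that $\Ad(vu_{\alpha})\fb_{\fr}$ lands in the open $T_{S}$-orbit are \emph{not} automatic for an arbitrary $v$ with $\Ad(v)\fb_{\fr}\in Q$; they require that the image of $\Ad(v)H$ in $S_{\alpha}^{\prime}$ coincide with $T_{S}$. This is precisely the ``compatibly chosen $v$'' you mention, and it is how the cited sources set things up, so quoting them here is appropriate rather than evasive.
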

\noindent Note that in all cases, $\pi_{\alpha}^{-1}(\pi_{\alpha}(Q))=MvP_{\alpha}\cdot \fb_{\fr}.$

\subsection{Extended Monoid action on $S_{i}$-orbits}
The general theory of Section \ref{ss:monoidbackground} can be applied to the setting of $G_{\Delta}$-orbits on 
$\B_{n}\times\B_{n}\times\mathbb{P}^{n-1}$ and $S_{i}$-orbits on $\B_{n}$.  The $G_{\Delta}$-orbits on $\B_{n}\times\B_{n}\times\mathbb{P}^{n-1}$ come equipped with a monoid action by $\Pi_{\fg}\times\Pi_{\fg}$, one copy of $\Pi_{\fg}$ for each factor of $\B_{n}$.  
We first consider the monoid action with respect to the second 
factor of $\B_{n}$ in the product $\B_{n}\times\B_{n}\times\mathbb{P}^{n-1}$.  
Any $G_{\Delta}$-orbit on $\B_{n}\times\B_{n}\times\mathbb{P}^{n-1}$ can be written in the form:
\begin{equation}\label{eq:Gdeltaorbit}
\mathcal{O}_{\Delta,\fb^{\prime}}:=G_{\Delta}\cdot (\fb, \fb^{\prime}, [e_{i}]), \mbox{ for some } i\in\{1,\dots, n\}.
\end{equation}
 Recall the orbit correspondences from Equations (\ref{eq:Gdeltacorres})-(\ref{eq:Sicorres}),
\begin{equation}\label{eq:three}
\mathcal{O}_{\Delta,\fb^{\prime}}=G\cdot (\fb, \fb^{\prime}, [e_{i}])\longleftrightarrow \mathcal{O}_{B,\fb^{\prime}}:=B\cdot (\fb^{\prime}, [e_{i}])\longleftrightarrow Q_{\fb^{\prime}}:=S_{i}\cdot \fb^{\prime}.
\end{equation}
Let $\alpha\in\Pi_{\fg}$. It follows from definitions that 
\begin{equation*}
\begin{split}
&\ms*_{2} \mathcal{O}_{\Delta,\fb^{\prime}} \mbox{ is open } G_{\Delta}-\mbox{orbit in } 
G_{\Delta}\cdot (\fb, P_{\alpha}^{\prime}\cdot \fb^{\prime}, [e_{i}])\longleftrightarrow \\
&\mbox{ open } B-\mbox{orbit in } B\cdot (P_{\alpha}^{\prime}\cdot\fb^{\prime}, [e_{i}])\longleftrightarrow \mbox{ open } S_{i}-\mbox{orbit in } S_{i}P_{\alpha}^{\prime}\cdot\fb^{\prime}. 
\end{split}
\end{equation*}
But the last orbit is exactly $\ms*Q_{\fb^{\prime}}$ from our discussion in Section \ref{ss:monoidbackground}.  
Summarizing, 
\begin{equation}\label{eq:secondfactormonoid}
\ms*_{2} \mathcal{O}_{\Delta,\fb^{\prime}}\longleftrightarrow \ms* Q_{\fb^{\prime}}=  \mbox{ open } S_{i}-\mbox{orbit in } S_{i}P_{\alpha}^{\prime}\cdot\fb^{\prime}.
\end{equation}

We now consider the monoid action on $G_{\Delta}\backslash(\B_{n}\times\B_{n}\times\mathbb{P}^{n-1})$ coming from the first factor of $\B_{n}$ in the triple product 
$\B_{n}\times\B_{n}\times\mathbb{P}^{n-1}$.  By definition,
\begin{equation}\label{eq:firstfactorfirst}
\ms*_{1} \mathcal{O}_{\Delta,\fb^{\prime}}=\mbox{ open } G_{\Delta}-\mbox{orbit in } G_{\Delta}\cdot (P_{\alpha}\cdot\fb, \fb^{\prime}, [e_{i}])\longleftrightarrow \mbox{ open } B-\mbox{orbit in } P_{\alpha}\cdot (\fb^{\prime}, [e_{i}]).
\end{equation}
For arbitrary $\alpha\in \Pi_{\fg}$, the parabolic subgroup $P_{\alpha}$ need not preserve the orbit $\mathcal{O}_{i}=B\cdot[e_{i}]$ and therefore $\ms*_{1}\mathcal{O}_{\Delta, \fb^{\prime}}$ need not correspond to an $S_{i}$-orbit on $\B_{n}$.  However, if $\alpha\in \{\alpha_{1},\dots, \alpha_{i-2},\alpha_{i+1},\dots, \alpha_{n-1}\}$, then $P_{\alpha}\cdot[e_{i}]=B\cdot[e_{i}]$ and $\ms*_{1} \calO_{\Delta,\fb^{\prime}}$ corresponds to an $S_{i}$-orbit on $\B_{n}$ which we denote by $\ms*_{L} Q_{\fb^{\prime}}$.  It will help to have a more detailed description of $\ms*_{L} Q_{\fb^{\prime}}$.
We define
\begin{equation}\label{eq:extraroots}
\mathfrak{S}_i:=\{\alpha\in\Pi_{\fg}|\; P_{\alpha}\cdot[e_{i}]=B\cdot[e_{i}]\}=\{\alpha_{1},\dots, \alpha_{i-2},\alpha_{i+1},\dots, \alpha_{n-1}\}.
\end{equation}
For any $p\in P_{\alpha}$, $p\cdot[e_{i}]=\tilde{b}\cdot[e_{i}]$ for some $\tilde{b}\in B$.  This implies that $\tilde{b}^{-1} p\in\mbox{Stab}_{P_{\alpha}}[e_{i}]$.  Let $s:=\tilde{b}^{-1} p\in\mbox{Stab}_{P_{\alpha}}[e_{i}]$, so that 
$p=\tilde{b}s$.  Therefore,
\begin{equation}\label{eq:secondhelper}
P_{\alpha}\cdot (\fb^{\prime}, [e_{i}])=B\cdot ( \mbox{Stab}_{P_{\alpha}}[e_{i}]\cdot\fb^{\prime}, [e_{i}]). 
\end{equation}
But the $B$-orbits in the variety in (\ref{eq:secondhelper}) correspond to $S_{i}$-orbits in the variety $(\mbox{Stab}_{P_{\alpha}}[e_{i}])\cdot\fb^{\prime}$.  Thus,  
\begin{equation}\label{eq:leftmonoidSi}
\mbox{ for } \alpha\in\mathfrak{S}_i:\; \;\ms*_{1} \mathcal{O}_{\Delta,\fb^{\prime}}\longleftrightarrow \ms*_{L}Q_{\fb^{\prime}}:=\mbox{ open } S_{i}-\mbox{orbit in } \mbox{Stab}_{P_{\alpha}}[e_{i}] \cdot\fb^{\prime}.
\end{equation}

\begin{dfn}\label{d:leftandright}
We refer to the monoid action by simple roots $\Pi_{\fg}$ on $S_{i}\backslash\B_{n}$ given in (\ref{eq:secondfactormonoid}) as the \emph{right monoid} action on $S_{i}$-orbits.  We refer to the monoid action given in Equation (\ref{eq:leftmonoidSi}) as the \emph{left monoid} action on $S_{i}$-orbits. Taken together, we refer to the monoid action by $\mathfrak{S}_i\coprod \Pi_{\fg}$ as the \emph{extended monoid action}.  
\end{dfn}

\begin{rem}\label{r:extended}
Recall from Remark \ref{r:specialcase} that when $i=n$, the group $S_{n}$ coincides up to centre with the standard upper triangular Borel subgroup $B_{n-1}$ of $GL(n-1)$.  In this case,
the roots $\mathfrak{S}_i=\{\alpha_{1},\dots,\alpha_{n-2}\}$ are the standard simple roots $\Pi_{\fgl(n-1)}$ of the subalgebra $\fgl(n-1)\subset\fg$.  The group $\mbox{Stab}_{P_{\alpha}}[e_{n}]$ coincides up to centre with the standard parabolic subgroup $P^{n-1}_{\alpha}$ of $GL(n-1)$ corresponding to the root $\alpha$.  Thus, in this case the left monoid action of (\ref{eq:leftmonoidSi}) coincides with the left monoid action via roots of $\fgl(n-1)$ on $\Borbitspace$ defined in Section 4 of \cite{CE21I}.  Taken together, Equations (\ref{eq:secondfactormonoid}) and (\ref{eq:leftmonoidSi}) coincide with the extended monoid action by simple roots $\Pi_{\fgl(n-1)\oplus\fg}$ on $\Borbitspace$ constructed in \emph{loc. cit.}.
\end{rem}

\subsection{The extended monoid action and $i$-Shareshian pairs}
In this section, we generalize Section 4.3 of \cite{Shpairs} to $S_{i}$-orbits on $\B_{n}$ and $i$-Shareshian pairs for any $i\in\{1,\dots, n\}$.  In \emph{loc. cit.}, we discuss how to obtain both left and right monoid actions on $B\backslash\B_{n}$ via simple roots of $\Pi_{\fg}$ and how these monoid actions induce left and right monoid actions of $W$ on itself.  These monoid actions are described in Equation (4.4) of \cite{Shpairs}, which we restate here for the convenience of the reader.
\begin{equation}\label{eq:Wmonoidact}
\begin{split}
&\mbox{ Let } \alpha\in\Pi_{\fg},\, w\in W;\\
\mbox{(Left Action) } &m(s_{\alpha})*_{L} w=w \mbox{ if } \ell(s_{\alpha}w)<\ell(w) \mbox{ and } m(s_{\alpha})*_{L}w=s_{\alpha}w\mbox{ if } \ell(s_{\alpha}w)>\ell(w).\\
\mbox{(Right Action) } &m(s_{\alpha})*_{R}w=w \mbox{ if } \ell(ws_{\alpha})<\ell(w) \mbox{ and } m(s_{\alpha})*_{R}w=ws_{\alpha}\mbox{ if } \ell(ws_{\alpha})>\ell(w).\\
\end{split}
\end{equation}
Recall that the Borel subalgebra $\fb^{i}$ stabilizes the flag $\mathcal{E}^{i}$ of Equation (\ref{eq:Fiflag}) and $\fb^{i}=\sigma_{i}(\fb),$ where $\sigma_{i}=(i,i-1,\dots, 2, 1)$ is the $i$-cycle from Proposition \ref{p:Shpairs}.   Let $\Pi^{i}_{\fg}=\{\alpha_{1}^{i},\dots, \alpha_{n-1}^{i}\}$ be the simple roots defined by the Borel subalgebra $\fb^{i}$ with corresponding set of simple reflections $T^{i}:=\{s^{i}_{1},\dots, s^{i}_{n-1}\}$.  It follows that $\alpha^{i}_{k}=\sigma_{i}(\alpha_{k})$ for $k\in \{1,\dots, n-1\}$.
\begin{nota}\label{n:Worders}
By considering the simple roots $\Pi_{\fg}^{i}$ and corresponding simple reflections $T^{i}$, we obtain a new order relation on $W$ by considering the length of an element $w^{i}\in W$ with respect to the set of simple reflections $T^{i}$.  We denote $W$ with this non-standard order relation by $(W, T^{i})$, and denote $W$ with the standard order relation by $(W,T)$.  We denote elements in the poset $(W,T)$ by $w$ and denote elements in the poset $(W, T^{i})$ by $w^{i}$.
\end{nota}
We also obtain left and right monoid actions of $(W,T^{i})$ on itself analogous to the ones in Equation (\ref{eq:Wmonoidact}) with $\alpha\in\Pi_{\fg}$ replaced by $\alpha^{i}\in \Pi_{\fg}^{i}$.  We now use the monoid actions of $(W,T)$ and $(W,T^{i})$ on themselves to define a monoid action of $\mathfrak{S}_i\coprod\Pi_{\fg}$ on $(W,T)\times (W,T^{i})$.  First, observe that $\mathfrak{S}_i\subset\Pi_{\fg}^{i}$.  Indeed, $\alpha_{k}=\alpha_{k+1}^{i}$ for $k=1,\dots, i-2$ and $\alpha_{k}=\alpha_{k}^{i}$ for $k=i+1,\dots, n-1$.   The following definition generalizes Definition 4.5 in \cite{Shpairs} to the case of arbitrary $i\in\{1,\dots, n\}$.

\begin{dfn}\label{d:diagonal}
Define the \emph{restricted diagonal monoid action} on $(W,T)\times (W,T^{i})$ via simple roots $\mathfrak{S}_i\coprod \Pi_{\fg}^{i}$ as follows. 

  
For $(w,u^{i})\in (W,T)\times (W,T^{i})$, define
\begin{equation}\label{eq:diagonalmonoid}
\begin{split}
\mbox{ (Left action)} &\mbox{ For } \alpha\in\mathfrak{S}_i,\, m(s_{\alpha})*_{L}(w,u^{i})=(m(s_{\alpha})*_{L}w, m(s_{\alpha})*_{L}u^{i}).\\
\mbox{ (Right action)} &\mbox{ For } \alpha\in\Pi_{\fg}, m(s_{\alpha})*_{R}(w,u^{i})=(m(s_{\alpha})*_{R}w, m(s_{\alpha^{i}})*_{R}u^{i}),
\end{split}
\end{equation}
where the monoid actions $*_{L}$ and $*_{R}$ are given in Equation (\ref{eq:Wmonoidact}).
\end{dfn}
Henceforth, we will drop the subscripts $L$ and $R$ on the monoid actions defined in Equation (\ref{eq:diagonalmonoid}) and use the convention that a simple root $\alpha\in\mathfrak{S}_i\coprod \Pi_{\fg}$ acts on the left whenever $\alpha\in \mathfrak{S}_i$ and on the right if $\alpha\in\Pi_{\fg}$.  We now arrive at the main result of this section, which generalizes Theorem 4.6 of \cite{Shpairs} to $S_{i}\backslash\B_{n}$ for arbitrary $i$.

\begin{thm}\label{thm:intertwine}(cf. Theorem 4.6 of \cite{Shpairs})
The $i$-Shareshian map $\Sh_{i}: S_{i}\backslash\B_{n} \to \Sp_{i}\subset W\times W$ given in Equation (\ref{eq:Shmap})
intertwines the monoid action by $\mathfrak{S}_i\coprod \Pi_{\fg}$ on $S_{i}\backslash\B_{n}$ given in Equations (\ref{eq:leftmonoidSi}) and (\ref{eq:secondfactormonoid}) with the restricted diagonal monoid action on $(W,T)\times (W,T^{i})$ given in (\ref{eq:diagonalmonoid}), i.e., for $Q\in S_{i}\backslash\B_{n}$ and $\alpha\in\mathfrak{S}_i\coprod \Pi_{\fg}$,
 \begin{equation}\label{eq:Shinter}
 \Sh_{i}(m(s_{\alpha})*Q)=m(s_{\alpha})*\Sh_{i}(Q).  
 \end{equation}
In particular, the restricted diagonal monoid action on $(W,T)\times (W, T^{i})$ preserves the set $\Sp_{i}$ of all $i$-Shareshian pairs.  
Moreover, if $m(s_{\alpha})*Q\neq Q$, then the type of the root $\alpha$ is determined by the type of $\alpha$ for the corresponding $i$-Shareshian pair $\Sh_{i}(Q)=(w,u^{i})$.  More precisely, for a root $\alpha\in\mathfrak{S}_i\coprod \Pi_{\fg}$, 
\begin{enumerate}
\item The root $\alpha$ is complex stable for $Q$ if and only if it is complex stable for both $w$ and $u^{i}$.
\item The root $\alpha$ is non-compact for $Q$ if and only if $\alpha$ is complex stable for exactly one of $w$ and $u^{i}$ and unstable for the other. 
\item The root $\alpha$ is real or complex unstable for $Q$ if and only if $\alpha$ is complex unstable for both $w$ and $u^{i}$.
\end{enumerate}
\end{thm}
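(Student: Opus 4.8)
The plan is to reduce the entire statement to the Bruhat decompositions for the two Borels $B$ and $B^{i}$, using the identification $\Sh_{i}(Q)=(w,u^{i})\Leftrightarrow Q=B\cdot w(\mathcal{E})\cap B^{i}\cdot u^{i}(\mathcal{E}^{i})$ from Theorem \ref{thm:ShpairsforSi} and Definition \ref{d:SHmap}. Writing $Q_{B}:=B\cdot Q$ and $Q_{B^{i}}:=B^{i}\cdot Q$, it then suffices to prove that the $B$- and $B^{i}$-saturations of $\ms*Q$ are themselves obtained by a monoid action: for $\alpha\in\Pi_{\fg}$ (acting on the right),
\begin{equation*}
B\cdot(\ms*Q)=\ms*Q_{B},\qquad B^{i}\cdot(\ms*Q)=m(s_{\alpha^{i}})*Q_{B^{i}},
\end{equation*}
where the actions are those coming from the fibration $\pi_{\alpha}\colon\B_{n}\to\mathcal{P}_{\alpha}$ and $\alpha^{i}=\sigma_{i}(\alpha)$ (here $\pi_{\alpha}=\pi^{i}_{\alpha^{i}}$ as maps, both sending a Borel to the unique parabolic of the common $G$-conjugacy class containing it); and for $\alpha\in\mathfrak{S}_i$ (acting on the left, noting $\mathfrak{S}_i\subseteq\Pi_{\fg}\cap\Pi^{i}_{\fg}$),
\begin{equation*}
B\cdot(\ms*Q)=\ms*Q_{B},\qquad B^{i}\cdot(\ms*Q)=\ms*Q_{B^{i}},
\end{equation*}
where these are the left monoid actions by the minimal parabolic of type $\alpha$ (on $B\backslash\B_{n}$, resp.\ on $B^{i}\backslash\B_{n}$), cf.\ Section 4.3 of \cite{Shpairs}. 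Granting these, the Bruhat decomposition rewrites the right-hand sides as $B\cdot(\ms*w)(\mathcal{E})$ and $B^{i}\cdot(m(s_{\alpha^{i}})*u^{i})(\mathcal{E}^{i})$ in the right case (resp.\ $B\cdot(\ms*w)(\mathcal{E})$ and $B^{i}\cdot(\ms*u^{i})(\mathcal{E}^{i})$ in the left case), with the monoid actions on $W$ being exactly those of Equation (\ref{eq:Wmonoidact}) --- for $(W,S)$ in the first factor and for $(W,S^{i})$ in the second. Comparing with Definition \ref{d:diagonal} yields Equation (\ref{eq:Shinter}), and since $\Sh_{i}\colon S_{i}\backslash\B_{n}\to\Sp_{i}$ is a bijection (Corollary \ref{c:Shpair}) it follows that the restricted diagonal monoid action preserves $\Sp_{i}$.

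All four displayed identities are instances of the following elementary \emph{transfer lemma}: if $M_{1}\subseteq M_{2}$ are connected subgroups of $G$, each with finitely many orbits on $\B_{n}$, $Z\subseteq\B_{n}$ is an irreducible constructible $M_{1}$-stable subset, and $Y\subseteq\B_{n}$ is an irreducible $M_{2}$-stable subset with $Z\subseteq Y$ and $M_{2}\cdot Z$ dense in $Y$, then the $M_{2}$-saturation of the open $M_{1}$-orbit in $Z$ equals the open $M_{2}$-orbit in $Y$. (Proof: the open $M_{2}$-orbit $O$ of $Y$ is contained in the dense, $M_{2}$-stable, constructible set $M_{2}\cdot Z$, so $O\cap Z\ne\emptyset$; being open and nonempty in the irreducible $Z$, it is dense and meets the dense open $M_{1}$-orbit $Q'$ of $Z$; a common point $z$ gives $Q'=M_{1}z\subseteq M_{2}z=O$, whence $M_{2}\cdot Q'=O$.) For the right action I apply this with $M_{1}=S_{i}$, $Z=\pi_{\alpha}^{-1}(\pi_{\alpha}(Q))$, and $M_{2}=B$ (resp.\ $B^{i}$) with $Y=\pi_{\alpha}^{-1}(\pi_{\alpha}(Q_{B}))$ (resp.\ $\pi_{\alpha}^{-1}(\pi_{\alpha}(Q_{B^{i}}))$), using $G$-equivariance of $\pi_{\alpha}$ to get $M_{2}\cdot Z=Y$. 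For the left action, Equation (\ref{eq:leftmonoidSi}) identifies $\ms*Q$ with the open $S_{i}$-orbit in $Z:=\mbox{Stab}_{P_{\alpha}}[e_{i}]\cdot\fb'$; since $P_{\alpha}\cdot[e_{i}]=B\cdot[e_{i}]$ one has $B\cdot\mbox{Stab}_{P_{\alpha}}[e_{i}]=P_{\alpha}$, so $M_{2}=B$, $Y=P_{\alpha}\cdot\fb'$ works, while since $\alpha\in\Pi^{i}_{\fg}$ the root subgroup $U_{-\alpha}$ fixes $[e_{i}]$ and lies in $\mbox{Stab}_{P_{\alpha}}[e_{i}]$, so $\mbox{Stab}_{P_{\alpha}}[e_{i}]\subseteq P^{i}_{\alpha}$ and $B^{i}\cdot\mbox{Stab}_{P_{\alpha}}[e_{i}]$ is dense in $P^{i}_{\alpha}$, and $M_{2}=B^{i}$, $Y=P^{i}_{\alpha}\cdot\fb'$ works ($P^{i}_{\alpha}$ the minimal parabolic of type $\alpha$ containing $B^{i}$). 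The auxiliary facts ($P_{\alpha}\cdot[e_{i}]=B\cdot[e_{i}]$, $U_{-\alpha}\subseteq\mbox{Stab}_{P_{\alpha}}[e_{i}]\subseteq P^{i}_{\alpha}$, and the density) are straightforward matrix computations in $GL_{n}$.

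For the assertions (1)--(3) on root types, recall that for a $B$-orbit or a $B^{i}$-orbit (a Schubert cell) every simple root is complex, and is complex stable precisely when the cell grows under the relevant $\mathbb{P}^{1}$-fibration. Fix $\fb'\in Q$, put $\fp'=\pi_{\alpha}(\fb')$, $S'=P'/V'$ (isogenous to $SL(2)$), and let $B_{0}$, $B^{i}_{0}$, and $M^{S_{i}}$ denote the images in $S'$ of $B\cap P'$, $B^{i}\cap P'$, and $S_{i}\cap P'=(B\cap P')\cap(B^{i}\cap P')$; then $B_{0},B^{i}_{0}$ are Borel subgroups of $S'$ and $M^{S_{i}}\subseteq B_{0}\cap B^{i}_{0}$, and $M^{S_{i}}$ is solvable (as $S_{i}\subseteq B$), so $\alpha$ is never compact imaginary for $Q$. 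The type of $\alpha$ for $Q$ is read off from the position of $\fb'$ on the fibre $\mathbb{P}^{1}\cong S'/B_{0}$: when $B_{0}=B^{i}_{0}$ and this Borel fixes $\fb'$ --- equivalently $\alpha$ is complex stable for both $w$ and $u^{i}$ --- $\alpha$ is complex stable for $Q$, giving case (1); when $B_{0}\ne B^{i}_{0}$ and $\fb'$ is one of the two fixed points of the common maximal torus $B_{0}\cap B^{i}_{0}$ --- equivalently $\fb'$ is fixed by exactly one of $B_{0},B^{i}_{0}$, i.e.\ $\alpha$ is complex stable for exactly one of $w,u^{i}$ --- $\alpha$ is noncompact imaginary for $Q$, giving case (2); and in all other cases $\fb'$ is fixed by neither, i.e.\ $\alpha$ is complex unstable for both $w$ and $u^{i}$, and $\alpha$ is real or complex unstable for $Q$, giving case (3). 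One implication in each equivalence (e.g.\ complex stable for $Q$ $\Rightarrow$ complex stable for $Q_{B}$ and $Q_{B^{i}}$) is immediate from $M^{S_{i}}\subseteq B_{0}$ and $B_{0}$ being a proper (solvable) subgroup of $S'$. The converse implications --- equivalently, showing that $M^{S_{i}}$ contains the unipotent radical of $B_{0}$ whenever $B_{0}=B^{i}_{0}$, and that $M^{S_{i}}$ is one-dimensional reductive in the relevant subcase of case (2) --- are the main technical obstacle; I expect to establish them by a direct computation using an $i$-standard representative of $Q$ (Theorem \ref{thm:istd}) and the explicit generators of $S_{i}$ relative to $B$ and $B^{i}$.
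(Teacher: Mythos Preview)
Your argument for the intertwining relation \eqref{eq:Shinter} is correct and follows essentially the same route as the paper.  Both proofs reduce to showing that the $B$- and $B^{i}$-saturations of $m(s_\alpha)*Q$ coincide with $m(s_\alpha)*Q_B$ and $m(s_\alpha)*Q_{B^{i}}$, and then invoke Theorem~\ref{thm:ShpairsforSi}.  The paper establishes the key inclusion $m(s_\alpha)*Q\subseteq m(s_\alpha)*Q_B$ for the left action by a short case split on whether $\alpha$ is complex stable or unstable for $Q_B$, then uses $\operatorname{Stab}_{P_\alpha}[e_i]\subseteq P_\alpha^{i}$ to repeat the argument for $B^{i}$; the right action is simply deferred to \cite{Shpairs}.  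Your transfer lemma abstracts the same density/irreducibility mechanism and applies it uniformly to all four saturations, which is a tidier packaging of the identical idea.  The auxiliary facts you cite ($B\cdot\operatorname{Stab}_{P_\alpha}[e_i]=P_\alpha$, $U_{-\alpha}\subseteq\operatorname{Stab}_{P_\alpha}[e_i]\subseteq P_\alpha^{i}$, and the resulting density of $B^{i}\cdot\operatorname{Stab}_{P_\alpha}[e_i]$ in $P_\alpha^{i}$) are exactly the ingredients the paper uses.

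For the root-type trichotomy (1)--(3), neither you nor the paper gives a complete argument: the paper simply invokes the analogous proof in \cite{Shpairs}, while you set up the local picture in $S'=P'/V'$ and honestly flag the missing step.  Your outline is the right one, but two points deserve care.  First, the assertion that $B_0$ and $B_0^{i}$ are \emph{Borel} subgroups of $S'$ is not automatic for an arbitrary $\fb'\in Q$; it holds once $\fb'$ is chosen $H$-stable (so that $H\subseteq P'$ and its image supplies the torus in $B_0$), which is available since root type is independent of the base point.  Second, the converse implications you defer follow from $S_i=B\cap B^{i}$: one checks that the image $M^{S_i}$ in $S'$ equals $B_0\cap B_0^{i}$ (surjectivity uses that $V'$ lies in any Borel of $P'$ containing it, so a coset in $B_0\cap B_0^{i}$ already has a representative in $B\cap B^{i}\cap P'$), whence $M^{S_i}=B_0$ when $B_0=B_0^{i}$ and $M^{S_i}$ is the common maximal torus when $B_0\ne B_0^{i}$.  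With this in hand your case analysis goes through without needing a flag-by-flag computation.
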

\begin{proof}
The proof of the Theorem for the right monoid action via a root $\alpha\in\Pi_{\fg}$ is completely analogous to the corresponding proof in \cite{Shpairs}, so we omit the details here.  For the left action of a root $\alpha\in \mathfrak{S}_i$, the proof is largely analogous to the one in \cite{Shpairs} except that the proof of the analogue of Equation (4.7) of \emph{loc.cit.} is slightly different in this case.  Let $\Sh(Q)=(Q_B, Q_{B^{i}})\in B\backslash\B_{n}\times B^{i}\backslash\B_{n}$ so that $Q=Q_B \cap Q_{B^{i}}$ by Remark \ref{r:geoshar}.
We claim that for $\alpha\in \mathfrak{S}_i$,  
\begin{equation}\label{eq:leftmsainter}
\ms*Q=\ms*Q_{B}\cap\ms*Q_{B^{i}},
\end{equation}
Let $Q=S_{i}xB/B$ for some $x\in G$.  Then by Equation (\ref{eq:leftmonoidSi}), we know that  $\ms*Q$ is $\mbox{the open } S_{i}-\mbox{orbit in } Stab_{P_{\alpha}}[e_{i}] x B/B$.  We claim that 
\begin{equation}\label{eq:firstinc}
\ms* Q\subset \ms* Q_{B}.
\end{equation}
Note that $\ms*Q_{B}$ is $\mbox{the open } B-\mbox{orbit in } P_{\alpha}xB/B$ and that 
$\mbox{Stab}_{P_{\alpha}}[e_{i}] xB/B\subset P_{\alpha}xB/B$.  We first show that 
\begin{equation}\label{eq:theyintersect}
\ms*Q_{B}\cap \mbox{Stab}_{P_{\alpha}}[e_{i}] xB/B\neq\emptyset. 
\end{equation}
If $\alpha$ is complex unstable for $Q_{B}$, the assertion is clear.  If on the other hand, 
$\alpha$ is complex stable for $Q_{B}$, then by Proposition \ref{p:stableandnc}, we have 
$\ms*Q_{B}=Bs_{\alpha}xB/B$.  Since $\alpha\in\mathfrak{S}_i$, we can choose a representative ${\dot{s}}_{\alpha}$ of $s_{\alpha}$ contained in
$\mbox{Stab}_{P_{\alpha}}[e_{i}]$.  It then follows that 
${\dot{s}}_{\alpha} xB/B\in \ms*Q_{B}\cap \mbox{Stab}_{P_{\alpha}}[e_{i}] xB/B$.  Since $\alpha$ is complex for $Q_{B}$ these two cases show that the assertion in Equation 
(\ref{eq:theyintersect}) holds.  By Equation (\ref{eq:theyintersect}), $\ms*Q_{B} \cap \mbox{Stab}_{P_{\alpha}}[e_{i}] xB/B$ is open and dense in the irreducible variety $\mbox{Stab}_{P_{\alpha}}[e_{i}] xB/B,$ so $\ms*Q$ intersects $\ms*Q_B$, which implies Equation (\ref{eq:firstinc}).
One can repeat the above argument with $Q_{B}$ replaced by $Q_{B^{i}}$ by observing that 
$\mbox{Stab}_{P_{\alpha}}[e_{i}]\subset P_{\alpha}^{i}$, where $P_{\alpha}^{i}\supset B^{i}$ is the parabolic subgroup of $G$ corresponding to $\alpha\in\mathfrak{S}_i\subset \Pi_{\fg}^{i}$.  We leave the details to the reader.  Equation (\ref{eq:leftmsainter}) then follows from the inclusion $\ms*Q\subset \ms*Q_{B}\cap \ms*Q_{B^{i}}$ and Remark \ref{r:geoshar}.  

The proof of the statements (1)-(3) of the Theorem for the left monoid action via a root $\alpha\in\mathfrak{S}_i$ is analogous to the proof of the corresponding statements for the case of the left monoid action via a root $\alpha\in\Pi_{\fk}$ in Theorem 4.6 of \cite{Shpairs} and is omitted.

\end{proof}


\begin{rem}\label{r:exoticmonoid}
There are two other monoid actions on $S_{i}\backslash\B_{n}$ that were not discussed in the above discussion, since the analysis of these actions requires different techniques.  First let $\alpha=\alpha_{i-1}.$  Then we can define an additional left monoid action $=m(s_{i-1})*_{L}$ on the $S_{i}$-orbits on $\B_{n}$ using the fact that the minimal parabolic subgroup $P_{\alpha}$ corresponding to $\alpha$ has the property that $B\cdot [e_i]$ is open in $P_{\alpha}\cdot [e_i].$   We can prove that for a $S_i$-orbit $Q_{\fb^{\prime}} = S_i \cdot \fb^{\prime}$ in $\B_n$,
\begin{enumerate}
\item $Q_{\fb^{\prime}}$ is not complex stable for $\alpha$;
\item If $\Sh_{i}(Q_{\fb^{\prime}})=(w, u^{i})$, then 
 \begin{equation}\label{eq:firstSh}
 \Sh_{i}(m(s_{i-1})*_{L} Q_{\fb^{\prime}})=(m(s_{i-1})*_{L} w, u^{i}),
 \end{equation}
 where $m(s_{i-1})*_{L}w$ denotes the left action of $s_{i-1}$ on $w$ as in
 (\ref{eq:Wmonoidact}). 
\end{enumerate} It follows that $\alpha$ is non-compact for $Q_{\fb^{\prime}}$ if and only if $\alpha$ is complex stable for $w$.
There is also a left monoid action on $S_{i}\backslash\B_{n}$ by the simple root 
 $\alpha_{1}^{i}=\eps_{i}-\eps_{1}\in\Pi_{\fg}^{i}$.  The construction of this monoid action will be explained in the sequel.  Results analogous to those for the action by $m(s_{i-1})$ hold for this action by $m(s_{i}^{1})$, except that in Equation (\ref{eq:firstSh}), the left action by $m(s_1^{i})$ is on the second factor instead of on the first factor.  We will prove these assertions in the sequel.\end{rem}

 \begin{exam}\label{ex:S2orbits}
 We conclude this paper by explicitly describing the Bruhat graph for the $S_{2}$-orbits on $\B_{3}$.  We use the language of standardized $2$-Shareshian pairs to describe $S_{2}$-orbits (see Definition \ref{d:standardizedpair}). By Example \ref{ex:formulatable}, $|S_{2}\backslash\B_{3}|=9$.  In the graph below, the zero dimensional orbits appear on the first row, the one dimensional orbits on the second, etc.     We also indicate the monoid actions.  A red line denotes a non-compact root and a blue line denotes a complex stable root.  Right actions are denoted by a solid line and left action by a dashed line.  A solid green line indicates a closure relation that is not obtained from a monoid action.  Note that in this case the set $\mathfrak{S}_i$ given in (\ref{eq:extraroots}) is empty, so the only left actions are the actions discussed in Remark \ref{r:exoticmonoid}.   For simplicity of notation, let $\alpha=\alpha_1$, let $\beta=\alpha_2$, let $s=s_{\alpha}$, and let $t=s_{\beta}.$   Then $\Pi_{\fgl(3)}^{2}=\{\alpha^{2},\beta^{2}\}$.   Note that the monoid action by $\alpha_1^{i}$ in Remark \ref{r:exoticmonoid} on a standardized $i$-Shareshian pair $(w, y)$
 is given by $m(s_{1}^{i})*(w,y)=(w, m(s_{\sigma_{i}^{-1}(\alpha_{1}^{i})})*_{L} y)=(w, m(s_{1})*_{L} y),$ where $\sigma_{i}$ is the $i$-cycle of Proposition \ref{p:Shpairs}.  Note also that for $i$-standardized Shareshian pairs, the $i$-Shareshian order is simply the restriction of the two-fold product of the standard Bruhat order to the set of standardized pairs (see discussion after Equation (\ref{eq:iShorder})). This Example illustrates the assertion of Equation (\ref{eq:iShorder}) (to be proved in the sequel) for $2$-standardized Shareshian pairs in $\Sigma_{3}\times\Sigma_{3}$.
 

\begin{center}
\begin{tikzpicture}  
 [scale=2.0,auto=center,every node/.style={rectangle,fill=white!20}] 
\node (a1) at  (-1,5) {$(e, s)$};
\node (a2) at (3,5) {$(s,e) $};
\node (a3) at (-2,3) {$(t,st)$};
\node (a4) at (1,3) {$(s,s)$};
\node (a5) at (4,3) {$(st,t)$};
\node (a6) at (-2,1) {$(ts, sts)$};
\node (a7) at (1,1) {$(st,st)$};
\node (a8) at (4,1) {$(sts,ts)$} ;
\node (a9) at (1,-1) {$(sts,sts)$};
\draw [blue] (a1) -- (a3) node[midway, above] {$\beta$}; 
  \draw [red] (a1) -- (a4) node[midway, above] {$\alpha$};  
 \draw [red] (a2) -- (a4) node[midway, above] {$\alpha$};  
 \draw [blue] (a2)--(a5) node[midway, above] {$\beta$};
 \draw [blue] (a3)--(a6) node[midway, above] {$\alpha$};
  \draw [red][dashed] (a3)--(a7) node[near start, above] {$\alpha$} ;
  \draw[blue] (a4)--(a7) node[midway, above] {$\beta$};
  \draw[green] (a4)--(a8);
  \draw[green] (a4)--(a6);
  \draw[red][dashed] (a5)--(a7) node[near start, above] {$\alpha^{2}$} ;
  \draw[blue] (a5)--(a8) node[midway, above] {$\alpha$};
  \draw[red]  (a6)--(a9) node[midway, above] {$\beta$};
    \draw[blue]  (a7)--(a9) node[midway, above] {$\alpha$};
       \draw[red]  (a8)--(a9) node[midway, above] {$\beta$};
\end{tikzpicture}
\end{center}

 \end{exam}

\bibliographystyle{amsalpha.bst}

\bibliography{bibliography-1}

\end{document}